\newcommand{\abs}[1]{| #1 |}
\newcommand{\norm}[1]{\| #1 \|}
\newcommand{\Abs}[1]{\left\lvert#1\right\rvert}
\newcommand{\ol}[1]{\overline{#1}}
\newcommand{\wt}[1]{\widetilde{#1}}
\newcommand{\on}[1]{\operatorname{#1}}
\newcommand{\longhookrightarrow}{\ensuremath{\lhook\joinrel\relbar\joinrel\rightarrow}}
\DeclareMathOperator{\MI}{MI}
\DeclareMathOperator{\MV}{MV}
\DeclareMathOperator{\val}{val}
\DeclareMathOperator{\Spec}{Spec}
\DeclareMathOperator{\vol}{vol}
\DeclareMathOperator{\conv}{conv}
\DeclareMathOperator{\ord}{ord}
\renewcommand{\and}{\quad \text{ and } \quad}
\newcommand{\rad}{{\operatorname{rad}}}
\newcommand{\Res}{{\operatorname{Res}}}
\newcommand{\h}{{\operatorname{h}}}
\renewcommand{\c}{{\operatorname{c}}}
\newcommand{\an}{{\mathrm{an}}}
\newcommand{\can}{{\mathrm{can}}}
\newcommand{\supp}{{\operatorname{supp}}}
\newcommand{\graph}{{\operatorname{graph}}}
\DeclareMathOperator{\newton}{N}
\DeclareMathOperator{\Hom}{Hom}
\DeclareMathOperator{\e}{e}
\DeclareMathOperator{\K}{K}
\newcommand{\Xan}{X^{\mathrm{an}}}
\newcommand{\Yan}{Y^{\mathrm{an}}}
\newcommand{\Van}{V^{\mathrm{an}}}
\newcommand{\fM}{\mathfrak{M}}
\newcommand{\fN}{\mathfrak{N}}
\newcommand{\fp}{\mathfrak{p}}
\newcommand{\Gm}{\mathbb{G}_{\mathrm{m}}}
\newcommand{\GmK}{\mathbb{G}_{\mathrm{m},\mathbb{K}}}
\newcommand{\dd}{\, {\mathrm{d}}}
\newcommand{\m}{{\operatorname{m}}}
\let\div\relax
\DeclareMathOperator{\div}{div}
\renewcommand{\AA}{{\mathbb{A}}}
\newcommand{\CC}{{\mathbb{C}}}
\newcommand{\FF}{{\mathbb{F}}}
\newcommand{\GG}{{\mathbb{G}}}
\newcommand{\KK}{{\mathbb{K}}}
\newcommand{\PP}{{\mathbb{P}}}
\newcommand{\QQ}{{\mathbb{Q}}}
\newcommand{\RR}{{\mathbb{R}}}
\renewcommand{\SS}{{\mathbb{S}}}
\newcommand{\TT}{{\mathbb{T}}}
\newcommand{\ZZ}{{\mathbb{Z}}}
\newcommand{\MQ}{\mathfrak{M}_{\mathbb{Q}}}
\def\fM {{\mathfrak M}}
\def\cO {{\mathcal O}}
\newcommand{\bfm}{{\boldsymbol{m}}}
\newcommand{\bfq}{{\boldsymbol{q}}}
\newcommand{\bfu}{{\boldsymbol{u}}}
\newcommand{\bfx}{{\boldsymbol{x}}}
\newcommand{\bfz}{{\boldsymbol{z}}}
\newcommand{\bfalpha}{{\boldsymbol{\alpha}}}
\newcommand{\bflambda}{{\boldsymbol{\lambda}}}
\newcommand{\bfnu}{{\boldsymbol{\nu}}}
\newcounter{thm}
\numberwithin{equation}{section}
\numberwithin{thm}{section}
\theoremstyle{definition}
\newtheorem{definition}[thm]{Definition}
\newtheorem{remark}[thm]{Remark}
\newtheorem{example}[thm]{Example}
\theoremstyle{plain}
\newtheorem{lemma}[thm]{Lemma}
\newtheorem{proposition}[thm]{Proposition}
\newtheorem{theorem}[thm]{Theorem}
\newtheorem{corollary}[thm]{Corollary}
\newtheorem{question}[thm]{Question}
\newtheorem{prop-def}[thm]{Proposition-Definition}
\begin{document}

\title[An arithmetic Bern\v{s}tein-Ku\v{s}nirenko inequality]
{An arithmetic~Bern\v{s}tein-Ku\v{s}nirenko inequality}

\author[Mart\'inez]{C\'esar Mart\'inez}
\address{Laboratoire de math\'ematiques Nicolas Oresme, CNRS UMR 6139, Universit\'e de Caen. BP 5186, 14032 Caen Cedex, France}
\email{cesar.martinez@unicaen.fr}

\author[Sombra]{Mart{\'\i}n~Sombra}
\address{ICREA. Passeig Llu\'is Companys 23, 08010 Barcelona, Spain \vspace*{-2.5mm}}
\address{Departament de Matem\`atiques i
Inform\`atica, Universitat de Barcelona. Gran Via 585, 08007
Bar\-ce\-lo\-na, Spain}
\email{sombra@ub.edu}
\urladdr{\url{http://www.maia.ub.es/~sombra}}

\date{\today} \subjclass[2010]{Primary 14G40; Secondary 14C17, 14M25, 52A41}
\keywords{height of points, Laurent polynomials, 
  mixed integrals, toric varieties, $\bfu$-resultants} 
\thanks{Mart\'inez and Sombra were partially
  supported by the MINECO research projects MTM2012-38122-C03-02 and
  MTM2015-65361-P.  Mart\'inez was also partially supported by the
  CNRS project PICS 6381 ``G\'eom\'etrie diophantienne et calcul
  formel".}

\begin{abstract}
  We present an upper bound for the height of the isolated zeros in
  the torus of a system of Laurent polynomials over an adelic field
  satisfying the product formula. This upper bound is expressed in
  terms of the mixed integrals of the local roof functions associated
  to the chosen height function and to the system of Laurent
  polynomials. We also show that this bound is close to optimal in
  some families of examples.

  This result is an arithmetic analogue of the classical
  Bern\v{s}tein-Ku\v{s}nirenko theorem. Its proof is based on
  arithmetic intersection theory on toric varieties.
\end{abstract}

\vspace*{-8mm}

\maketitle

\vspace*{-10mm}

\section[Introduction]{Introduction} \label{sec:introduction}

The classical Bern\v{s}tein-Ku\v{s}nirenko theorem bounds the number
of isolated zeros of a system of Laurent polynomials over a field, in
terms of the mixed volume of their Newton polytopes \cite{kushnirenko:pnnm,Bernstein:nrsp}. This result,
initiated by~Ku\v{s}nirenko and put into final form by Bern\v stein,
is also known as the BKK theorem to acknowledge Khovanski\u\i's
contributions to this subject.  It shows how a geometric problem (the
counting of the number of solutions of a system of equations) can be
translated into a combinatorial, simpler one. It is commonly used to
predict when a given system of equations has a small number
of solutions. As such, it is a cornerstone of polynomial equation
solving and has motivated a large amount of work and results over the
past 25 years, see for instance
\cite{GelfandKapranovZelevinsky:drmd,Sturmfels:sspe,PhilipponSombra:rBKe}
and the references therein.

When dealing with Laurent polynomials over a field with an arithmetic
structure like the field of rationals, it might be also important to
control the arithmetic complexity or \emph{height} of their zero set.
In this paper, we present an arithmetic version of the BKK theorem,
bounding the height of the isolated zeros of a system of Laurent
polynomials over such a field. It is a refinement of the arithmetic
B\'ezout theorem that takes into account the finer monomial structure
of the system.

Previous results in this direction were obtained by Maillot
\cite{Maillot:GAdvt} and by the second author \cite{Sombra:msvtp}. Our
current result improves these previous upper bounds, and generalizes
them to adelic fields satisfying the product formula, and to height
functions associated to arbitrary nef toric metrized divisors.

Let $\KK$ be a field and $\ol \KK$ its algebraic closure.  Let
$M\simeq \ZZ^{n}$ be a lattice and set 
\begin{displaymath}
 \KK[M]\simeq \KK[x_{1}^{\pm1},\dots, x_{n}^{\pm1}] \and \TT_{M}=\Spec(\KK[M])\simeq \GmK^{n}   
\end{displaymath}
for its group $\KK$-algebra and algebraic torus over $\KK$,
respectively. For a family of Laurent polynomials
$f_{1},\dots, f_{n}\in \KK[M]$, we denote by $Z(f_{1},\dots, f_{n})$
the 0-cycle of $\TT_{M}$ given by the isolated solutions of the system
of equations
\begin{displaymath}
f_{1}=\dots= f_{n}=0
\end{displaymath}
with their corresponding multiplicities (Definition \ref{def:2}).

Set $M_{\RR}=M\otimes \RR\simeq \RR^{n}$. Let $\vol_{M}$ be the Haar
measure on $M_{\RR}$ normalized so that $M$ has covolume~$1$, and
let $\MV_{M}$ be the corresponding mixed volume function. For $i=1,\dots, n$,
let $\Delta_{i}\subset M_{\RR}$ be the Newton polytope of $f_{i}$. The
BKK theorem amounts to the upper bound
\begin{equation}
  \label{eq:1}
  \deg(Z(f_{1},\dots, f_{n}))\leq \MV_{M}(\Delta_{1},\dots, \Delta_{n}),
\end{equation}
which is an equality when the $f_{i}$'s are generic with respect to
their Newton polytopes \cite{kushnirenko:pnnm,Bernstein:nrsp}, see also
Theorem \ref{thm:BKK}.

Now suppose that $\KK$ is endowed with a set of places $\fM$, so that
the pair $(\KK,\fM)$ is an adelic field (Definition \ref{def:1}). 
Each place $v\in \fM$ consists of an absolute value $|\cdot|_{v}$ on
$\KK$ and a weight $n_{v}>0$. We assume that this set of places
satisfies the \emph{product formula}, namely, for all $\alpha\in
\KK^{\times}$, 
\begin{equation*}
  \sum_{v\in \fM}n_{v}\log|\alpha|_{v}=0.
\end{equation*}
The classical examples of adelic fields satisfying the product formula
are the global fields, that is, number fields and function fields of
regular projective curves. 

Let $X$ be toric compactification of $\TT_{M}$ and $\ol D_{0}$ a nef
toric metrized divisor on~$X$, see \S\ref{sec:metr-divis-heights} and
\S\ref{sec:toric-varieties} for details. This data gives a notion of
height for 0-cycles of~$X$, see \cite[Chapter 2]{BG:hdg} or
\S\ref{sec:metr-divis-heights}.  The height
\begin{displaymath}
  \h_{\ol D_{0}}(Z(f_{1},\dots, f_{n})) 
\end{displaymath}
is a nonnegative real number, and it is our aim to bound this
quantity in terms of the monomial expansion of the $f_{i}$'s.

The toric Cartier divisor $D_{0}$ defines a polytope
$\Delta_{0}\subset M_{\RR}$.  Following \cite{BPS14}, we associate to
$\ol D_{0}$ an adelic family of continuous concave functions
$\vartheta_{0,v}\colon \Delta_{0}\to \RR$, $v\in \fM$, called the
local roof functions of $\ol{D}_{0}$.  

For $i=1,\dots, n$, write
\begin{displaymath}
 f_{i}=\sum_{m\in M}\alpha_{i,m}\chi^{m} 
\end{displaymath}
with $\alpha_{i,m}\in \KK$.  Let
$N_{\RR}=M_{\RR}^{\vee}\simeq \RR^{n}$ be the dual space and, for each
place $v\in \fM$, consider the concave function
$\psi_{i,v}\colon N_{\RR}\to \RR$ defined by
\begin{equation}
  \label{eq:9}
  \psi_{i,v}(u)= 
  \begin{cases}
    -\log\Big( \displaystyle \sum_{m\in  M}|\alpha_{i,m}|_{v}
    \e^{-\langle m,u\rangle}\Big) & \text{ if } v \text{ is Archimedean},\\
    -\log\Big( \displaystyle \max_{m\in  M}|\alpha_{i,m}|_{v}
    \e^{-\langle m,u\rangle}\Big) & \text{ if } v \text{ is non-Archimedean}.
  \end{cases}
\end{equation}
The Legendre-Fenchel dual $\vartheta_{i,v}= \psi_{i,v}^{\vee}$ is a
continuous concave function on $\Delta_{i}$.

We denote by $\MI_{M}$ the mixed integral of a family of $n+1$ concave
functions on convex bodies of $M_{\RR}$ (Definition \ref{def:11}).  It
is the polarization of $(n+1)!$ times the integral of a concave
function on a convex body. It is a functional that is symmetric and
linear in each variable with respect to the sup-convolution of concave
functions, see \cite[\S8]{PhilipponSombra:rBKe} for details.

The following is the main result of this paper.

\begin{theorem}
  \label{thm:1}
  Let $f_1,\ldots,f_n\in\KK [M]$, and let $X$ be a proper toric
  variety with torus $\TT_{M}$ and $\ol D_0$ a nef toric metrized
  divisor on $X$. Let $\Delta_{0}\subset M_{\RR}$ be the polytope of
  $D_{0}$ and, for $v\in \fM$, let
  $\vartheta_{0,v}\colon \Delta_{0}\to \RR$ be $v$-adic roof function
  of $\ol D_{0}$. For $i=1,\dots, n$, let $\Delta_{i}\subset M_{\RR}$
  be the Newton polytope of $f_{i}$ and, for $v\in \fM$, let
  $\vartheta_{i,v}\colon \Delta_{i}\to \RR$ be the Legendre-Fenchel
  dual of the concave function in \eqref{eq:9}. Then
  \begin{equation}
    \label{eq:3}
\h_{\ol{D}_0}(Z(f_1,\ldots,f_n))
\leq \sum_{v\in\fM}n_v \MI(\vartheta_{0,v},\ldots,\vartheta_{n,v}).  
  \end{equation}
\begin{displaymath}
\end{displaymath}
\end{theorem}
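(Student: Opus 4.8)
The plan is to reduce the arithmetic Bernštein-Kušnirenko inequality to an arithmetic intersection-theoretic computation on a toric variety, in analogy with the geometric proof of the BKK bound \eqref{eq:1}. The classical BKK equality for generic systems is the guiding principle: the mixed volume $\MV_M(\Delta_1,\dots,\Delta_n)$ equals a toric intersection number $D_1\cdots D_n$ where $D_i$ is the toric divisor attached to the polytope $\Delta_i$. The arithmetic analogue replaces divisors by metrized divisors and intersection numbers by heights, and the combinatorial object computing the height of a toric divisor is precisely the integral of its roof function; its polarization is the mixed integral $\MI_M$. So the heart of the matter is to express $\h_{\ol D_0}(Z(f_1,\dots,f_n))$ as an arithmetic intersection number of suitable nef toric metrized divisors on a common proper toric variety, and then invoke the toric formula for such intersection numbers in terms of mixed integrals of roof functions.

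\textbf{Step 1: Choice of model and metrized divisors.} First I would pass to a proper toric variety $X'$ (refining $X$) whose fan is compatible with all the Newton polytopes $\Delta_1,\dots,\Delta_n$, so that each $f_i$ defines a semipositive toric metrized $\RR$-Cartier divisor $\ol D_i$ on $X'$ whose associated polytope is $\Delta_i$ and whose $v$-adic roof function is exactly $\vartheta_{i,v}=\psi_{i,v}^\vee$ from \eqref{eq:9}. This is the standard dictionary between Laurent polynomials and metrics on toric line bundles: the local metric at $v$ is read off from the tropicalization-type function $\psi_{i,v}$, and its Legendre-Fenchel dual is the roof function. Crucially each $\ol D_i$ is semipositive (the $\psi_{i,v}$ are concave), and $\ol D_0$ is nef by hypothesis, so all the $\ol D_i$ are nef and their arithmetic intersection product is well-defined and the toric formula applies.

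\textbf{Step 2: The $\bfu$-resultant / effective cycle comparison.} The bound, rather than an equality, comes from the fact that the $f_i$ need not be generic: the $0$-cycle $Z(f_1,\dots,f_n)$ of isolated solutions in $\TT_M$ may have degree strictly smaller than $\MV_M(\Delta_1,\dots,\Delta_n)$, with extra components at infinity or non-isolated components absorbing the difference. The standard device here is the $\bfu$-resultant: one forms an auxiliary linear form (with the metrized divisor $\ol D_0$ playing the role of the "degree-one" divisor whose height we wish to bound), and compares $Z(f_1,\dots,f_n)$ to the cycle $\div(f_1)\cdots\div(f_n)\cdot X'$ on $X'$. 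The positivity of all the metrized divisors involved gives the inequality: the height of the part of the intersection cycle supported on $\TT_M$ (which dominates $Z(f_1,\dots,f_n)$) is at most the total arithmetic intersection number $\ol D_0\cdot\ol D_1\cdots\ol D_n$. Concretely, one writes $\h_{\ol D_0}(Z(f_1,\dots,f_n))$ as an arithmetic intersection number against the closure of $Z$, dominates it by the full intersection product using nefness, and identifies that product via arithmetic Bézout-type manipulations.

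\textbf{Step 3: The toric formula for the arithmetic intersection number.} Once the height is bounded by $\ol D_0\cdot\ol D_1\cdots\ol D_n$, I would apply the arithmetic analogue of the toric intersection formula — the Burgos-Philippon-Sombra formula from \cite{BPS14} — which evaluates the arithmetic intersection number of $n+1$ nef toric metrized divisors on an $n$-dimensional proper toric variety as
\begin{displaymath}
  \ol D_0\cdot\ol D_1\cdots\ol D_n
  = \sum_{v\in\fM} n_v\,\MI_M(\vartheta_{0,v},\vartheta_{1,v},\dots,\vartheta_{n,v}),
\end{displaymath}
where $\vartheta_{i,v}$ is the $v$-adic roof function of $\ol D_i$. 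Combined with Step 1 (which identifies these roof functions with the functions in the statement) and Step 2 (which gives the inequality), this yields \eqref{eq:3}.

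\textbf{Main obstacle.} The delicate point is Step 2: correctly handling the discrepancy between the $0$-cycle $Z(f_1,\dots,f_n)$ of isolated toric solutions and the full intersection class on the toric compactification. One must show that the contributions "at infinity" and from positive-dimensional components are nonnegative and hence can only increase the right-hand side — this uses in an essential way that the metrized divisors are nef (so that the heights of all effective cycles appearing are $\geq 0$) and that $D_0$ is only nef, not necessarily ample, requiring a limiting/perturbation argument to reduce to the ample case where intersection-theoretic positivity is cleanest. A secondary technical nuisance is that the $\ol D_i$ are only $\RR$-Cartier and only semipositive (not DSP in general off the torus), so one must check that the framework of \cite{BPS14} for heights and the toric formula genuinely applies; this is where the choice of a sufficiently fine fan in Step 1 pays off, making each $D_i$ an honest nef toric Cartier divisor on $X'$.
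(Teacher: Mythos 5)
The overall route you sketch is essentially the paper's: attach toric metrized divisors $\ol D_1,\dots,\ol D_n$ to the Laurent polynomials, dominate $\h_{\ol D_0}(Z(f_1,\dots,f_n))$ by the arithmetic intersection number $\h_{\ol D_0,\dots,\ol D_n}(X)$, and evaluate the latter by the toric height formula of \cite{BPS14} as $\sum_v n_v\MI_M(\vartheta_{0,v},\dots,\vartheta_{n,v})$. That is also what the paper does.

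There is, however, a real gap in your Step 2. You attribute the inequality $\h_{\ol D_0}\big(\prod_i\div(s_i)\big)\le\h_{\ol D_0,\dots,\ol D_n}(X)$ to ``the positivity of all the metrized divisors involved'' and to nefness. Nefness of $\ol D_0$ is indeed what gives $\h_{\ol D_0}(Z)\le\h_{\ol D_0}\big(\prod\div(s_i)\big)$, since $\prod\div(s_i)-Z$ is an effective $0$-cycle (Proposition \ref{prop:0cycles}). But the further step down from $\h_{\ol D_0,\dots,\ol D_n}(X)$ does \emph{not} follow from nefness of the $\ol D_i$. In the local height recursion one picks up a correction term $-\int_{X_v^{\an}}\log\norm{s_i}_v\,\c_1(\ol D_0)\wedge\cdots\wedge\delta$; for this to have the right sign one needs the specific global section $s_i$ of $\cO(D_i)$ corresponding to $f_i$ to be $\ol D_i$-small (that is, $\sup\norm{s_i}_v\le 1$ at every place), and one needs each $\ol D_i$ to be generated by small sections (to control the other factors via Lemma \ref{lemm:2}). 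Rescaling $s_i$ by a constant with $|c|_v>1$ at some place would break the inequality, so positivity of the divisor alone is not enough. It is exactly this requirement that forces the particular metric in \eqref{eq:9} — the $\ell^1$-type norm at Archimedean places and the $\max$-norm at non-Archimedean ones, realised as a scaled pullback of a metric on projective space — and it is the content of Proposition \ref{prop:10}, Lemma \ref{lemm:3}, Proposition \ref{prop:recursive}, and Corollary \ref{cor:2}. Two smaller points: the $\bfu$-resultant plays no role in the proof of Theorem \ref{thm:1} (it enters only in Theorem \ref{thm:2}), and there is no need to perturb from nef to ample — Propositions \ref{prop:recursive} and \ref{prop:2} handle nef metrized divisors and small sections directly. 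Also, since the $\Delta_i$ are lattice polytopes, after refining the fan the $D_i$ are honest Cartier divisors, so the $\RR$-Cartier caveat does not arise.
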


Using the basic properties of the mixed integral, we can bound the
terms in the right-hand side of \eqref{eq:3} in terms of mixed
volumes. From this, we can derive the bound (Corollary \ref{cor:height2})
\begin{multline*}
\h_{\ol{D}_0}(Z(f_1,\ldots,f_n))
\leq
\MV_{M}(\Delta_1,\ldots,\Delta_n) \Big(\sum_{v\in\fM}\max\vartheta_{0,v}\Big)\\[-2mm]
+ 
\sum_{i=1}^n\MV_{M}(\Delta_0,\ldots,{\Delta}_{i-1},
\Delta_{i+1},\ldots,\Delta_n) \ell(f_i), 
\end{multline*}
where   $\ell(f_{i})$  denotes  the (logarithmic) length of
$f_{i}$ (Definition~\ref{def:l1norm}). This bound  might be compared
with the one  given by the arithmetic B\'ezout theorem (Corollary~\ref{cor:3}).

The following example illustrates a typical application of these
results. It concerns two height functions applied to the same
0-cycle. Our upper bounds are close to optimal for both of them and,
in particular, they reflect their very different behavior on this
family of Laurent polynomials.

\begin{example}
  \label{exm:1}
  Take two integers $d,\alpha \ge 1$ and consider the system of Laurent
  polynomials
  \begin{displaymath}
  f_{1}=x_{1}-\alpha , \quad 
f_{2}=x_{2}-\alpha x_{1}^{d} \quad, \dots, \quad
  f_{n}=x_{n}-\alpha x^d_{n-1} \quad\in \QQ[x_{1}^{\pm1},\dots, x_{n}^{\pm1}].
  \end{displaymath}
  The 0-cycle $Y:=Z(f_{1},\dots, f_{n})$ of $\GG^{n}_{\mathrm{m},\QQ}$
  is the single point
  $(\alpha , \alpha ^{d+1}, \dots, \alpha ^{d^{n-1}+\dots+d+1})$ with
  multiplicity 1.

Let $\PP_{\QQ}^{n}$ be the $n$-dimensional projective space over $\QQ$
and $\ol{E}^{\can}$ the divisor of the hyperplane at infinity,
equipped with the canonical metric. Its associated height function is
the Weil height.  We consider two toric compactifications
$X_{1}$ and $X_{2}$ of
$\Gm^{n}$. These are given by compactifying the torus via the
equivariant  embeddings
$\iota_{i}\colon \Gm^{n}\hookrightarrow \PP^{n}_{\QQ}$, $i=1,2$,
respectively defined, for
$p=(p_{1},\dots,p_{n})\in \Gm^{n}(\ol\QQ)=(\ol\QQ^{\times})^{n}$, by
\begin{displaymath}
  \iota_{1}(p)=(1:p_{1}:\dots:p_{n}) \and   \iota_{2}(p)=(1:p_{1}:p_{2}p_{1}^{-d}:\dots:p_{n}p_{n-1}^{-d}).
\end{displaymath}
Set  $\ol{D}_{i}=\iota_{i}^{*}\, \ol{E}^{\can}$, $i=1,2$,
which are nef toric metrized divisors on $X_{i}$.  By an
explicit computation, we show that
\begin{equation*}
  \h_{\ol{D}_{1}}(Y) =
  \Big(\sum_{i=1}^{n}d^{i-1}\Big)\log(\alpha ) \and   
  \h_{\ol{D}_{2}}(Y) = \log(\alpha ). 
\end{equation*}
On the other hand, the
upper bounds given by Theorem \ref{thm:1} are
\begin{displaymath}
  \h_{\ol{D}_{1}}(Y)  \le 
  \Big(\sum_{i=1}^{n}d^{i-1}\Big)\log(\alpha +1) \and
  \h_{\ol{D}_{2}}(Y) \le  n \log(\alpha +1),
\end{displaymath}
see Example \ref{exm:2} for details. 
\end{example}

To the best of our knowledge, the first arithmetic analogue of the BKK
theorem was proposed by Maillot \cite[Corollaire
8.2.3]{Maillot:GAdvt}, who considered the case of canonical toric
metrics.  Another result in this direction was obtained by the second
author for the unmixed case and also canonical toric metrics
\cite[Th\'eor\`eme 0.3]{Sombra:msvtp}. Theorem~\ref{thm:1} improves
and refines these previous upper bounds, and generalizes them to
adelic fields satisfying the product formula and to height functions
associated to arbitrary nef toric metrized divisors, see
\S\ref{sec:examples} for details.

The key point in the proof of Theorem \ref{thm:1} consists of the
construction, for each Laurent polynomial $f_{i}$, of a nef toric
metrized divisor $\ol D_{i}$ on a proper toric variety $X$, such
that~$f_{i}$ corresponds to a small section of $\ol D_{i}$
(Proposition \ref{prop:10} and Lemma \ref{lemm:3}). The proof then
proceeds by applying the constructions and results of \cite{BPS14,
  BMPS12} and basic results from arithmetic intersection theory.

Trying to keep our results at a similar level of generality as those
in \cite{BPS14}, we faced difficulties to define and study global
heights of cycles over adelic fields. This lead us to a more detailed
study of these notions. In particular, we give a new notion of adelic
field extension that preserves the product formula (Proposition
\ref{prop:finiteextension}) and a well-defined notion of global height
for cycles with respect to metrized divisors that are generated by
small sections (Proposition-Definition \ref{prop:4}).

As an application of Theorem \ref{thm:1}, we give an upper bound for
the size of the coefficients of the $\bfu$-resultant of the direct
image under a monomial map of the solution set of a system of Laurent
polynomial equations. 

For the simplicity of the exposition, set $\KK=\QQ$ and
$M=\ZZ^{n}$. Let $r\ge 0$,
$\bfm_{0}=(m_{0,0},\dots, m_{0,r})\in (\ZZ^{n})^{r+1}$ and
$\bfalpha_{0}=(\alpha_{0,0},\dots, \alpha_{0,r})\in (\ZZ\setminus
\{0\})^{r+1}$,
and consider the map
$\varphi_{\bfm_{0},\bfalpha_{0}}\colon \GG_{\mathrm{m},\QQ}^{n}\to \PP_{\QQ}^{r}$ defined
by
\begin{equation}
  \label{eq:7}
\varphi_{\bfm_{0},\bfalpha_{0}}(p)=
(\alpha_{0,0}\chi^{m_{0,0}}(p): \dots :
\alpha_{0,r}\chi^{m_{0,r}}(p) ).
\end{equation}

For a 0-cycle $W$ of $\PP_{\QQ}^{r}$, let $\bfu=(u_0,\ldots,u_r)$ be a
set of $r+1$ variables and denote by
$\Res(W) \in \ZZ[u_{1},\dots, u_{r}]$ its primitive $\bfu$-resultant
(Definition \ref{def:13}). It is well-defined up a sign. For a vector
$\bfalpha$ with integer entries, we denote by $\ell(\bfalpha)$ the
logarithm of the sum of the absolute values of its entries.

\begin{theorem} \label{thm:2} Let
  $f_{1},\dots, f_{n}\in \ZZ[x_{1}^{\pm1},\dots,x_{n}^{\pm1}]$,
  $\bfm_{0}\in (\ZZ^{n})^{r+1}$ and
  $\bfalpha_{0}\in (\ZZ\setminus \{0\})^{r+1}$ with $r\ge 0$.  Set
  $\Delta_{0}=\conv(m_{0,0},\dots, m_{0,r})\subset \RR^{n}$ and let
  $\varphi$ be the monomial map associated to $\bfm_{0}$ and
  $\bfalpha_{0}$ as in~\eqref{eq:7}. For $i=1,\dots, n$, let
  $\Delta_{i}\subset {\RR}^{n}$ be the Newton polytope of $f_{i}$, and
  $\bfalpha_{i}$ the vector of nonzero coefficients of $f_{i}$.  Then
\[
\ell(\Res(\varphi_*Z(f_1,\ldots,f_n)))
\leq
\sum_{i=0}^n
\MV_M(\Delta_0,\ldots,\Delta_{i-1},\Delta_{i+1},\ldots,\Delta_n) \, \ell (\bfalpha_{i}).
\]
\end{theorem}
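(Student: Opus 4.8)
The plan is to reduce Theorem~\ref{thm:2} to Theorem~\ref{thm:1} by interpreting the length of the $\bfu$-resultant as a height. First I would recall the standard fact (see, e.g., \cite{PhilipponSombra:rBKe}) that for a $0$-cycle $W$ of $\PP^{r}_{\QQ}$, the length $\ell(\Res(W))$ of its primitive $\bfu$-resultant coincides, up to a controlled error, with the Weil height $\h_{\ol E^{\can}}(W)$ of $W$, where $\ol E^{\can}$ is the hyperplane at infinity equipped with the canonical metric; more precisely one has an equality $\ell(\Res(W)) = \h_{\ol E^{\can}}(W)$ once the resultant is taken primitive and one is careful about the normalization of the canonical height on projective space. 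So it suffices to bound $\h_{\ol E^{\can}}(\varphi_*Z(f_1,\ldots,f_n))$.

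Next I would handle the pushforward. Set $Z=Z(f_1,\ldots,f_n)$, a $0$-cycle of $\GG_{\mathrm{m},\QQ}^{n}$. The monomial map $\varphi=\varphi_{\bfm_0,\bfalpha_0}$ of~\eqref{eq:7} is the composition of a torus homomorphism (given by the $m_{0,j}-m_{0,0}$) twisted by the scaling by the $\alpha_{0,j}$, followed by the standard equivariant open immersion $\GG_{\mathrm{m},\QQ}^{r}\hookrightarrow\PP^{r}_{\QQ}$. Choose a proper toric variety $X$ with torus $\TT_M$ that refines the relevant fans, so that $\varphi$ extends to a toric morphism $X\to\PP^{r}_{\QQ}$, and let $\ol D_0 := \varphi^{*}\ol E^{\can}$. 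This is a nef toric metrized divisor on $X$: nefness and the toric property are preserved under pullback by toric morphisms, and the canonical metric pulls back to a semipositive metric. By functoriality of heights under pushforward for nef metrized divisors, $\h_{\ol E^{\can}}(\varphi_*Z) = \h_{\ol D_0}(Z)$. Now apply Theorem~\ref{thm:1} with this $\ol D_0$: we get
\[
\h_{\ol D_0}(Z) \le \sum_{v\in\fM_{\QQ}} n_v\, \MI_M(\vartheta_{0,v},\ldots,\vartheta_{n,v}),
\]
where $\Delta_0=\conv(m_{0,0},\ldots,m_{0,r})$ is indeed the polytope of $D_0$ (it is the image of the standard simplex under the affine map induced by $\bfm_0$), and $\vartheta_{i,v}$ for $i\ge 1$ is the dual of the function in~\eqref{eq:9} attached to $f_i$.

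The final step is to estimate the right-hand side. For $i\ge 1$ and $v$ non-Archimedean, $\psi_{i,v}$ is piecewise linear and its dual $\vartheta_{i,v}$ is bounded on $\Delta_i$ by $\log\max_m|\alpha_{i,m}|_v$; for $v$ Archimedean one uses $\sum_m|\alpha_{i,m}|_v \le e^{\ell(\bfalpha_i)}$ at $v=\infty$, so $\vartheta_{i,v}\le \ell(\bfalpha_i)$ on $\Delta_i$ (and, crucially, $\vartheta_{i,v}=0$ for all but finitely many $v$ since $f_i$ has integer coefficients, so the roof function vanishes at the non-Archimedean places dividing no coefficient; this is what makes the adelic sum finite). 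The same analysis applies to $\ol D_0$: for the pulled-back canonical metric the local roof functions $\vartheta_{0,v}$ vanish at all non-Archimedean $v$, while at $v=\infty$ one has $\vartheta_{0,\infty}\le \ell(\bfalpha_0)$ on $\Delta_0$, because the twist by $\bfalpha_0$ shifts the defining max/sum by exactly $\log\sum_j|\alpha_{0,j}|$. Hence only $v=\infty$ contributes, and by monotonicity of the mixed integral in each argument together with its expression in terms of mixed volumes (the same inequality underlying Corollary~\ref{cor:height2}) we bound
\[
\MI_M(\vartheta_{0,\infty},\ldots,\vartheta_{n,\infty}) \le \sum_{i=0}^{n}\MV_M(\Delta_0,\ldots,\widehat{\Delta_i},\ldots,\Delta_n)\,\ell(\bfalpha_i),
\]
which is exactly the claimed bound once one notes $n_\infty=1$ in $\fM_\QQ$.

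The main obstacle is the first reduction: making precise the identification of $\ell(\Res(W))$ with a canonical Weil height of the $0$-cycle $W$, and especially controlling this identification under the direct image $\varphi_*$ while keeping the resultant \emph{primitive} (i.e.\ with content $1$). One must check that no spurious common factor or denominator is introduced when passing from $Z(f_1,\ldots,f_n)$ to $\varphi_*Z(f_1,\ldots,f_n)$, equivalently that the primitive $\bfu$-resultant really computes the naive (Arakelov) height with the canonical metric and not merely up to a bounded additive constant; this is where the arithmetic-geometric input from \cite{PhilipponSombra:rBKe, BPS14} is needed. Once that dictionary is in place, the rest is the formal functoriality of toric heights plus the combinatorial estimates on roof functions, which are routine given the machinery already set up for Theorem~\ref{thm:1} and Corollary~\ref{cor:height2}.
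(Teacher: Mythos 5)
Your overall strategy mirrors the paper's: reduce to a height bound for a toric metrized divisor $\ol D_0$ on a toric compactification, use submultiplicativity of the length $\ell$ to control $\ell(\Res(\varphi_*Z))$ by a sum over points, then apply the machinery of Corollary~\ref{cor:height2}. However, there is a genuine gap in your first reduction, and it is not a normalization issue that can be fixed by taking the resultant primitive: the claimed identity $\ell(\Res(W))=\h_{\ol E^{\can}}(W)$ is false. For a rational point $q=(q_0:\cdots:q_r)$ with $\gcd$-primitive integer coordinates, $\h_{\ol E^{\can}}(q)=\log\max_j|q_j|$ while $\ell(\Res(q))=\log\sum_j|q_j|$; at the Archimedean place the length uses the $\ell^1$-norm, the canonical metric uses the sup-norm. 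One only gets $\h_{\ol E^{\can}}(W)\leq\ell(\Res(W))$, which is the wrong direction for your reduction. As a result, taking $\ol D_0=\varphi^*\ol E^{\can}$ does not let you dominate $\ell(\Res(\varphi_*Z))$ by $\h_{\ol D_0}(Z)$.

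The paper's fix is precisely to replace the canonical metric with the $\ell^1$-metrized divisor $\ol E$ on $\PP^r_{\KK}$ defined in~\eqref{eq:45} (sum of absolute values at Archimedean places, max at non-Archimedean). The pullback $\ol D_0=\ol D_{\bfm_0,\bfalpha_0}$ is worked out in Proposition~\ref{prop:10}, and equation~\eqref{eq:48} shows that for every point $p$ one has the exact identity $\ell\big(\alpha_{0,0}\chi^{m_{0,0}}(p)u_0+\cdots+\alpha_{0,r}\chi^{m_{0,r}}(p)u_r\big)=\h_{\ol D_0}(p)$. Combined with submultiplicativity and invariance of $\ell$ under adelic field extension, this yields $\ell(\Res(\varphi_*Z))\leq\sum_p\mu_p\,\h_{\ol D_0}(p)$, and then Corollary~\ref{cor:height2} (using that $\sum_v n_v\max\vartheta_{0,v}=\ell(\bfalpha_0)$ for this metric, by~\eqref{eq:49}, just as $\max\vartheta_{i,v}=\ell_v(f_i)$ for $i\ge1$ by~\eqref{eq:54}) gives the stated bound. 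Note that Proposition~\ref{prop:10} also guarantees that this $\ol D_0$ is semipositive, nef and generated by small sections, which Corollary~\ref{cor:height2} requires; these properties hold for the $\ell^1$-pullback, not because one is pulling back a canonical metric. Once you swap $\ol E^{\can}$ for $\ol E$, the rest of your argument goes through with only cosmetic changes, and you do not even need to invoke functoriality of the pushforward: the pointwise identity above replaces it.
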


The paper is organized as follows. In \S\ref{sec:intersection-theory}
we recall some preliminary material on intersection theory and on the
algebraic geometry of toric varieties.  In
\S\ref{sec:adel-fields-extens} we study adelic fields satisfying the
product formula.  In \S\ref{sec:metr-divis-heights} we recall the
notions of metrized divisors and its associated measures and heights,
with an emphasis on the 0-dimensional case.  In
\S\ref{sec:toric-varieties} we explain the notation and basic
constructions of the arithmetic geometry of toric varieties.  In
\S\ref{sec:theorems} we prove Theorem \ref{thm:1}, whereas in
\S\ref{sec:examples} we give examples illustrating the applications of
our bounds, and prove Theorem \ref{thm:2}.

\medskip \noindent {\textbf{Acknowledgments.}} We thank Jos\'e Ignacio
Burgos, Roberto Gualdi and Patrice Philippon for useful discussions.
We also thank Walter Gubler, Philipp Habegger and the referee for
their helpful comments and suggestions for improvement on a previous
version of this paper. Part of this work was done while the authors met
at the Universitat de Barcelona and at the Universit\'e de Caen. We
thank both institutions for their hospitality.

\section{Intersection theory and toric varieties} \label{sec:intersection-theory}

In this section, we recall the proof of the
Bern\v{s}tein-Ku\v{s}nirenko theorem using intersection theory on
toric varieties, which is the model that we follow in our treatment of
the arithmetic version of this result.  Presenting this proof also
allows us to introduce the basic definitions and results on the
intersection of Cartier divisors with cycles, and on the algebraic
geometry of toric varieties. For more details on these subjects, we
refer to~\cite{Ful98, Lazarsfeld:posit_I} and to \cite{Ful93}.

Let $K$ be an infinite field and $X$ a variety over~$K$ of dimension
$n$.  For $0\le k\le n$, the \emph{group of $k$-cycles}, denoted by
$Z_k(X)$, is the free abelian group on the $k$-dimensional irreducible
subvarieties of $X$.  Thus, a \emph{$k$-cycle} is a finite formal sum
\[
Y=\sum_V m_V V
\]
where the $V$'s are $k$-dimensional irreducible subvarieties of $X$
and the $m_V$'s are integers.  The \emph{support} of $Y$, denoted by
$|Y|$, is the union of the subvarieties $V$ such that $m_{V}\ne
0$.
The cycle $Y$ is \emph{effective} if $m_V\geq 0$ for every $V$.  Given
$Y, Y'\in Z_{k}(X)$, we write $Y'\leq Y$ whenever $Y-Y'$ is effective.

Let $Z$ be a subscheme of $X$ of pure dimension $k$.  For an
irreducible component $V$ of $Z$, we denote by $\cO_{V,Z}$ the local ring of
$Z$ along $V$, and by $l_{\cO_{V,Z}}(\cO_{V,Z})$ its length as an
$\cO_{V,Z}$-module.  The {$k$-cycle} associated to $Z$ is then
defined as
\[
[Z]=\sum l_{\cO_{V,Z}}(\cO_{V,Z})\, V\mbox{,}
\]
the sum being over the irreducible components of $Z$. 

Let $V$ be an irreducible subvariety of $X$ of codimension one and $f$
a regular function on an open subset $U$ of $X$ such that $U\cap
V\neq\emptyset$. The \emph{order of vanishing} of $f$ along $V$ is
defined as
\[
\ord_{V}(f)=l_{\mathcal{O}_{V,X}(U)}(\mathcal{O}_{V,X}(U)/(f))\mbox{.}
\]
For a Cartier divisor $D$ on $X$, the \emph{order of vanishing} of $D$
along $V$ is  defined as
\[
\ord_{V}(D)=\ord_{V}(g)-\ord_{V}(h),
\]
with $g,h\in\mathcal{O}_{V,X}(U)$ such that $g/h$ is a local equation
of $D$ on an open subset $U$ of $X$ with $U\cap V\neq \emptyset$.  This
definition does not depend on the choice of $U$, $g$ and~$h$.
Moreover, $\ord_V(D)= 0$ for all but a finite number of $V$'s. The
\emph{support} of $D$, denoted by $\abs{D}$, is the union
of these subvarieties $V$ such that  $\ord_V(D)\ne 0$. The Weil divisor associated to $D$ is then
defined as
\begin{equation}\label{eq:intersectioncycle}
D\cdot X=\sum_V \ord_V(D)\, V\mbox{,}
\end{equation}
 the sum being over the irreducible components of~$\abs{D}$. 

 Now let $W$ be an irreducible subvariety of $X$ of dimension~$k$.  If
 $W\not\subset\abs{D}$, then~$D$ restricts to a Cartier divisor on
 $W$.  In this case, we define $D\cdot W$ as the Weil divisor of~$W$
 obtained by restricting (\ref{eq:intersectioncycle}) to~$W$.  This
 gives a $(k-1)$-cycle of~$X$.  If $W\subset\abs{D}$, then we set
 $D\cdot W=0$, the zero element of $Z_{k-1}(X)$. We extend by linearity
 this intersection product to a morphism
\begin{equation*}
  Z_{k}(X)\longrightarrow Z_{k-1}(X), \quad Y\longmapsto D\cdot Y,
\end{equation*}
with the convention that $Z_{-1}(X)=0$, the zero group.

For $0\le r\leq n$ and Cartier divisors $D_i$ on~$X$, $i=1,\dots, r$,
we define inductively the intersection product
$\prod_{i=1}^{r}D_i \in Z_{n-r}(X)$ by
\begin{equation*}
    \prod_{i=1}^{t}D_i= 
    \begin{cases}
      X & \text{ if } t=0, \\
D_{1}\cdot \prod_{i=2}^{t} D_{i} & \text{ if } 1\le t\le r.
    \end{cases}
\end{equation*}

\begin{definition} 
\label{def:8}
Let $Y$ be a $k$-cycle of $X$ and $D_1,\ldots,D_r$ 
Cartier divisors on~$X$, with~$r\leq k$.  We say that $D_1,\ldots,D_r$
\emph{intersect} $Y$ \emph{properly} if, for every subset
$I\subset\lbrace 1,\ldots,r\rbrace$,
\[
\dim \Big( |Y|\cap\bigcap_{i\in I}\abs{D_i}\Big)=k-\#I\mbox{.}
\]
\end{definition}

If $D_1,\ldots,D_r$ intersect $X$ properly, then the 
cycle $\prod_{i=1}^{r}D_{i}$ does not depend on the order of the
$D_i$'s~\cite[Corollary~2.4.2]{Ful98}.  This conclusion does not
necessarily hold if these divisors do not intersect properly.

\begin{example}
  Let $X=\AA^2_K$ and consider the principal Cartier divisors
  $D_1=\div(x_{1}x_{2})$ and $D_2=\div(x_{1})$. Then
\begin{displaymath}
	D_1\cdot D_2=0 \and D_2\cdot D_1= (0,0)\mbox{.}
\end{displaymath}

\end{example}

\begin{proposition}\label{prop:intersection}
  Let $X$ be a Cohen-Macaulay variety over $K$ of pure dimension~$n$
  and $D_1,\ldots,D_n$ Cartier divisors on $X$. Let $s_{i}$ be a
  global section of $\cO(D_{i})$, $i=1,\dots, n$, and write
\begin{equation}\label{eq:27}
  \prod_{i=1}^{n}\div(s_{i})=\sum_{p}m_{p}\, p \in Z_{0}(X),
\end{equation}
where the sum is over the closed points $p$ of $X$ and $m_{p}\in
\ZZ$. This 0-cycle is effective and, for each isolated closed point
$p$ of the intersection $\bigcap_{i=1}^{n}\abs{\div(s_{i})}$, 
\[
m_p=\dim_{K} (\mathcal{O}_{p,X}(U)/(f_1,\ldots,f_n)),
\]
where $U$ is a trivializing neighborhood of $p$, and $f_i$ is a
defining function for $s_i$ on $U$, $i=1,\dots, n$.
\end{proposition}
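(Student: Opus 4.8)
The plan is to establish the two assertions—effectivity of the 0-cycle and the local multiplicity formula—separately, the first being a consequence of the second together with a standard reduction, and the bulk of the work going into the local statement.

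First I would reduce to the local situation. Since effectivity and the value of $m_p$ are local conditions at each closed point $p$, and since $m_p = 0$ whenever $p$ does not lie in $\bigcap_i \abs{\div(s_i)}$, it suffices to treat an isolated point $p$ of this intersection (the non-isolated case contributes components of positive dimension, but at the level of the 0-cycle these either do not occur—if $X$ has pure dimension $n$ and the $\div(s_i)$ genuinely cut down dimension—or are handled by noting the excess-dimensional part carries no $0$-cycle; in any case the displayed formula only claims something at isolated points). Choosing a trivializing neighborhood $U$ of $p$ on which each $s_i$ corresponds to a regular function $f_i \in \cO_{p,X}(U)$, the cycle $\prod_{i=1}^n \div(s_i)$ in a neighborhood of $p$ agrees with the cycle $[\,\cO_{p,X}/(f_1,\ldots,f_n)\,]$ associated to the subscheme cut out by the $f_i$, at least once one knows this subscheme is $0$-dimensional near $p$, which holds precisely because $p$ is isolated in the intersection of supports. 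So the coefficient $m_p$ equals $l_{\cO_{p,X}}\!\big(\cO_{p,X}/(f_1,\ldots,f_n)\big)$, and since the residue field at a closed point of a variety over $K$ is a finite extension of $K$—here we may further assume $K$ algebraically closed, or absorb the residue degree—this length equals $\dim_K\!\big(\cO_{p,X}(U)/(f_1,\ldots,f_n)\big)$. This identification of the iterated divisor intersection with the Koszul-type cycle of the ideal $(f_1,\ldots,f_n)$ is the technical heart: it is where the Cohen-Macaulay hypothesis enters.

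The key step, then, is to show that for a Cohen-Macaulay local ring $(A,\fm)$ of dimension $n$ and elements $f_1,\ldots,f_n \in \fm$ whose common zero locus is just $\{\fm\}$ (equivalently $A/(f_1,\ldots,f_n)$ is Artinian, equivalently $f_1,\ldots,f_n$ is a system of parameters), the inductively-defined intersection cycle $\div(f_1)\cdots \div(f_n)$ has multiplicity at $\fm$ equal to $l_A(A/(f_1,\ldots,f_n))$. The point is that in a Cohen-Macaulay ring a system of parameters is automatically a regular sequence, so $f_1,\ldots,f_n$ is $A$-regular. One then argues by induction on $n$: the divisor $\div(f_1)$ corresponds to the subscheme $A/(f_1)$, which is Cohen-Macaulay of dimension $n-1$ (quotient of CM by a nonzerodivisor), the cycle $\div(f_1)\cdot X$ is $[\,A/(f_1)\,]$, and on this quotient $f_2,\ldots,f_n$ remains a regular system of parameters; applying the inductive hypothesis and then the additivity of length along the filtration given by the regular sequence (concretely, $l_A(A/(f_1,\ldots,f_n))$ is computed from $l$ of successive quotients, each a quotient by a nonzerodivisor, which matches the inductive bookkeeping of the cycle construction) gives the result. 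Effectivity is then immediate: each $m_p$ so computed is a length, hence $\ge 0$, and the points not in the intersection of supports contribute $0$.

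The main obstacle I expect is the bookkeeping in passing from the \emph{iterated} $\div(s_1)\cdot(\div(s_2)\cdot(\cdots))$—which a priori restricts $\div(s_i)$ to the previous intersection cycle and could involve components of $\div(s_i)$ contained in earlier supports, or multiplicities arising from the non-reduced structure of intermediate cycles—to the clean symmetric statement about $l_A(A/(f_1,\ldots,f_n))$. The Cohen-Macaulay hypothesis is exactly what kills the pathologies: it forces every intermediate quotient $A/(f_1,\ldots,f_j)$ to be CM of the expected dimension with $f_{j+1}$ a nonzerodivisor on it, so no spurious lower-dimensional components appear and the order-of-vanishing computations all reduce to lengths of the form $l(B/f B)$ with $f$ a nonzerodivisor. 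Making this precise—ideally by citing the relevant statements in \cite{Ful98} (e.g.\ the treatment of intersection with divisors and Example~2.4.x there) and supplying the CM-specific observation that a system of parameters is a regular sequence—is the delicate part; the rest is the standard identification of length with $K$-dimension over a field.
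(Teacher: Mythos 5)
Your argument for the multiplicity formula is in essence a sketched re-derivation of the ingredients the paper cites directly: the paper shrinks $U$ so that $\div(s_1),\ldots,\div(s_n)$ intersect properly near $p$ and then invokes \cite[Proposition~7.1 and Example~7.1.10]{Ful98} (the Cohen--Macaulay case, in which the intersection multiplicity equals the colength), together with \cite[Example~A.1.1]{Ful98} for the passage from length to $\dim_K$. Your route---observe that $(f_1,\ldots,f_n)$ is a system of parameters at $p$, hence a regular sequence since $\cO_{p,X}$ is Cohen--Macaulay, and peel off one $f_i$ at a time with each quotient remaining Cohen--Macaulay---is the proof of those cited statements. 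It is a legitimate approach and correctly isolates what the Cohen--Macaulay hypothesis buys, but as sketched it leaves unresolved exactly the bookkeeping you flag at the end: the iterated intersection is a sum over irreducible components of intermediate cycles (which need not themselves be Cohen--Macaulay), whereas the inductive hypothesis should really be applied to the quotient \emph{scheme}, and the comparison of lengths across components must be carried out. The paper simply hands all of this to Fulton.

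The genuine gap is in the effectivity argument. You assert that only isolated points of $\bigcap_i |\div(s_i)|$ can carry nonzero coefficients, so that effectivity follows from those coefficients being lengths. This is false: points lying on a positive-dimensional component of $\bigcap_i |\div(s_i)|$ can still carry nonzero coefficients in the iterated product. For example on $X=\AA^2_K$, taking $s_1$ with local equation $x$ and $s_2$ with local equation $xy$ as global sections of the trivial bundle, one has $\bigcap_i|\div(s_i)|=V(x)$, which is one-dimensional, yet
\[
\div(s_1)\cdot\div(s_2)\cdot X \;=\; \div(s_1)\cdot\big(V(x)+V(y)\big)\;=\;0+(0,0)\;=\;(0,0),
\]
so the origin, a non-isolated point of the intersection of supports, appears with coefficient $1$. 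Your argument does not cover such points and so does not establish effectivity. The correct and simpler argument is the paper's: each $\div(s_i)$ is an \emph{effective} Cartier divisor because $s_i$ is a global section, and intersecting an effective Cartier divisor with an effective cycle yields an effective cycle (either the component $W$ lies inside $|\div(s_i)|$, contributing $0$, or $\div(s_i)$ restricts to an effective Cartier divisor on $W$); effectivity of the iterated product then follows by induction on the number of factors.
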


\begin{proof}
  The fact that the cycle in \eqref{eq:27} is effective follows from
  the hypothesis that the~$s_{i}$'s are global sections.

  For the second statement, by possibly replacing $U$ with a smaller
  open neighborhood of $p$, we can assume that
  $\div(s_{1}), \dots, \div(s_{n})$ intersect $X$ properly on~$U$, and
  so this intersection is of dimension $0$. By \cite[Proposition~{7.1}
  and Example~{7.1.10}]{Ful98},
\[
m_p=l_{\mathcal{O}_{p,X}(U)}(\mathcal{O}_{p,X}(U)/(f_1,\ldots,f_n))\mbox{.}
\]
By \cite[Example A.1.1]{Ful98},  we have the equality
\[
l_{\mathcal{O}_{p,X}(U)}(\mathcal{O}_{p,X}(U)/(f_1,\ldots,f_n))=
\dim_{K} (\mathcal{O}_{p,X}(U)/(f_1,\ldots,f_n))\mbox{,}
\]
completing the proof.
\end{proof}

For the rest of this section, we assume that the variety $X$ is
projective. With this hypothesis, Chow's moving lemma allows to
construct, given a cycle and a family of Cartier divisors, another
family of linearly equivalent Cartier divisors intersecting the given cycle properly,
in the sense of Definition \ref{def:8}. 

\begin{definition}
  \label{def:7}
  Let $Y$ be a $k$-cycle of $X$ and $D_{1},\dots, D_{k}$ 
  Cartier divisors on $X$. The \emph{degree} of $Y$ with respect to
  $D_{1},\dots, D_{k}$, denoted by $\deg_{D_{1},\dots, D_{k}}(Y)$, is
   inductively defined by the rules:
\begin{enumerate}
\item \label{item:5} if $k=0$, write $Y=\sum_{p}m_{p} \, p $ and set 
$\deg(Y)=\sum_{p}m_{p}\, [\K(p):K]$;
\item \label{item:6} if $k\ge1$, choose a rational section $s_{k}$ of
  $\cO(D_{k})$ such that $\div(s_{k})$ intersects $Y$ properly, and
  set $\deg_{D_{1},\dots, D_{k}}(Y)= \deg_{D_{1},\dots,
    D_{k-1}}(\div(s_{k})\cdot Y)$.
\end{enumerate}
\end{definition}

The degree of a cycle with respect to a family of Cartier divisors does not
depend on the choice of the rational section $s_{k}$ in \eqref{item:6}, see for instance
\cite[\S 2.5]{Ful98} or \cite[\S 1.1.C]{Lazarsfeld:posit_I}. 

A Cartier divisor $D$ on $X$ is \emph{nef} if $\deg_{D}(C)\ge 0$ for
every irreducible curve $C$ of $X$. By Kleiman's theorem \cite[\S
1.4.B]{Lazarsfeld:posit_I}, for a family of nef Cartier divisors
$D_{1},\dots, D_{k}$ on~$X$ and an effective $k$-cycle $Y$ of $X$, 
\begin{equation}
  \label{eq:19}
  \deg_{D_{1},\dots, D_{k}}(Y)\ge 0.
\end{equation}

\begin{proposition}
  \label{prop:5}
  Let $Y$ be an effective $k$-cycle of $X$ and $D_{1},\dots, D_{k}$
  nef Cartier divisors on $X$. Let $s_{k}$ be a global section of
  $\cO(D_{k})$. Then
  \begin{displaymath}
    0\le \deg_{D_{1},\dots, D_{k-1}}( \div(s_{k})\cdot Y)     \le \deg_{D_{1},\dots, D_{k}}(Y).
  \end{displaymath}
\end{proposition}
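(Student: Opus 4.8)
The plan is to treat the two inequalities separately, in both cases exploiting the effectivity that comes from $s_k$ being a \emph{global} section together with Kleiman's inequality \eqref{eq:19}.

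For the lower bound, a global section has regular local defining functions, so $\div(s_k)$ is an effective Cartier divisor; hence for every irreducible subvariety $W$ of $X$ with $W\not\subset|\div(s_k)|$ the restriction $\div(s_k)|_W$ is an effective Cartier divisor on $W$, and by \eqref{eq:intersectioncycle} its associated Weil divisor is an effective $(k-1)$-cycle, each coefficient $\ord_V(\div(s_k)|_W)$ being the length of a quotient module. Writing $Y=\sum_V m_V\,V$ with all $m_V\ge 0$ and recalling that $\div(s_k)\cdot W=0$ when $W\subset|\div(s_k)|$, it follows that $\div(s_k)\cdot Y$ is an effective $(k-1)$-cycle. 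Since $D_1,\dots,D_{k-1}$ are nef, \eqref{eq:19} yields $\deg_{D_1,\dots,D_{k-1}}(\div(s_k)\cdot Y)\ge 0$.

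For the upper bound, by linearity of the degree and of the intersection product in the cycle variable it suffices to show, for each irreducible $k$-dimensional subvariety $V$ occurring in $Y$, that $\deg_{D_1,\dots,D_{k-1}}(\div(s_k)\cdot V)\le\deg_{D_1,\dots,D_k}(V)$, and then to multiply by $m_V\ge 0$ and sum. If $V\not\subset|\div(s_k)|$, then $\div(s_k)$ restricts to a Cartier divisor on $V$ with underlying sheaf $\cO(D_k)|_V$, so it may be used as the rational section in Definition \ref{def:7}\eqref{item:6}; as that definition is independent of this choice, one gets the \emph{equality} $\deg_{D_1,\dots,D_{k-1}}(\div(s_k)\cdot V)=\deg_{D_1,\dots,D_k}(V)$. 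If instead $V\subset|\div(s_k)|$, then $\div(s_k)\cdot V=0$ by the convention built into the intersection product, so the left-hand side vanishes while $\deg_{D_1,\dots,D_k}(V)\ge 0$ by \eqref{eq:19} applied to the effective cycle $V$. In either case the inequality holds, and summing over $V$ gives $\deg_{D_1,\dots,D_{k-1}}(\div(s_k)\cdot Y)=\sum_{V\not\subset|\div(s_k)|}m_V\deg_{D_1,\dots,D_k}(V)\le\deg_{D_1,\dots,D_k}(Y)$, the omitted terms being nonnegative.

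The one point needing genuine care is the equality in the first case: it is exactly the statement that the recipe of Definition \ref{def:7}\eqref{item:6} does not depend on the chosen properly-intersecting rational section (equivalently, invariance of the degree under rational equivalence), which relies on the projectivity of $X$ via Chow's moving lemma, as recalled right after that definition. The rest is routine bookkeeping with effectivity and Kleiman's inequality, so I do not anticipate any serious obstacle.
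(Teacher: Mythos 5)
Your proof is correct and follows essentially the same two-step argument as the paper: effectivity of $\div(s_k)\cdot Y$ plus Kleiman's inequality for the lower bound, and reduction to an irreducible $V$ with the dichotomy $V\subset|\div(s_k)|$ versus $V\not\subset|\div(s_k)|$ for the upper bound, invoking the independence of the degree on the choice of properly-intersecting rational section in the second case. You spell out the effectivity argument and the role of Chow's moving lemma in a bit more detail than the paper does, but there is no difference in substance.
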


\begin{proof}
  Since $Y$ is effective and $s_{k}$ is a  global section, 
  $ \div(s_{k})\cdot Y$ is also effective. Since $D_{1},\dots,
  D_{k-1}$ are nef, by \eqref{eq:19} we have that $\deg_{D_{1},\dots,
    D_{k-1}}( \div(s_{k})\cdot Y)\ge 0$, proving the lower bound.

  For the upper bound, we reduce without loss of generality to the
  case when $Y=V$ is an irreducible subvariety of dimension $k$. If
  $V\subset |\div(s_{k})|$, then $\div(s_{k})\cdot Y=0\in
  Z_{k-1}(X)$. Hence $\deg(\div(s_{k})\cdot Y)=0$ and the bound
  follows from the nefness of the~$D_{i}$'s. Otherwise, from the
  definition of the degree,
  \begin{displaymath}
    \deg_{D_{1},\dots, D_{k-1}}( \div(s_{k})\cdot V) =
  \deg_{D_{1},\dots, D_{k}}(V),
  \end{displaymath}
which completes the proof. 
\end{proof}

\begin{corollary}
  \label{cor:1}
  Let $D_1,\ldots,D_n$ be nef Cartier divisors on $X$ and, for
  $i=1,\dots, n$, let $s_{i}$ be a global section of
  $\cO(D_{i})$. Then
\begin{displaymath}
 0\leq \deg\bigg(\prod_{i=1}^{n}\div(s_{i}) \bigg)\le \deg_{D_{1},\dots, D_{n}}(X).
\end{displaymath}
\end{corollary}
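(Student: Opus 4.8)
The plan is to obtain the statement by iterating Proposition~\ref{prop:5} exactly $n$ times, peeling off one section at a time. Concretely, I would set $Y_{0}=X\in Z_{n}(X)$ and, for $j=1,\dots,n$, define inductively the cycle
\[
Y_{j}=\div(s_{n-j+1})\cdot Y_{j-1}\in Z_{n-j}(X).
\]
By the recursive definition $\prod_{i=1}^{t}D_{i}=D_{1}\cdot\prod_{i=2}^{t}D_{i}$ of the intersection product, bringing in first $\div(s_{n})$, then $\div(s_{n-1})$, and so on down to $\div(s_{1})$, the final cycle is exactly $Y_{n}=\prod_{i=1}^{n}\div(s_{i})$, the $0$-cycle appearing in the statement. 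Since $X$ is effective and each $s_{i}$ is a \emph{global} section, the observation recorded at the start of the proof of Proposition~\ref{prop:5} (intersecting an effective cycle with the divisor of a global section yields an effective cycle) shows by induction that every $Y_{j}$ is again effective.

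Next I would prove, by induction on $j=0,\dots,n$, the chain of inequalities
\[
0\le \deg_{D_{1},\dots,D_{n-j}}(Y_{j})\le \deg_{D_{1},\dots,D_{n}}(X).
\]
For $j=0$ there is nothing to prove: the right-hand inequality is an equality, and the left-hand one is Kleiman's inequality~\eqref{eq:19} applied to the effective $n$-cycle $X$ and the nef divisors $D_{1},\dots,D_{n}$. For the inductive step, I would apply Proposition~\ref{prop:5} to the effective $(n-j)$-cycle $Y_{j}$, the nef Cartier divisors $D_{1},\dots,D_{n-j}$ (a subfamily of nef divisors is again nef), and the global section $s_{n-j}$ of $\cO(D_{n-j})$, obtaining
\[
0\le \deg_{D_{1},\dots,D_{n-j-1}}\bigl(\div(s_{n-j})\cdot Y_{j}\bigr)\le \deg_{D_{1},\dots,D_{n-j}}(Y_{j}).
\]
Since $\div(s_{n-j})\cdot Y_{j}=Y_{j+1}$ and $\deg_{D_{1},\dots,D_{n-j}}(Y_{j})\le \deg_{D_{1},\dots,D_{n}}(X)$ by the inductive hypothesis, the claim follows for $j+1$. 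Taking $j=n$ gives $0\le\deg(Y_{n})\le\deg_{D_{1},\dots,D_{n}}(X)$, which is the assertion because $Y_{n}=\prod_{i=1}^{n}\div(s_{i})$.

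There is no genuine obstacle here; the only point requiring care is the bookkeeping, namely matching the order in which the $\div(s_{i})$ are brought in with the recursive definition of $\prod_{i=1}^{n}\div(s_{i})$, and verifying that effectivity is preserved at every stage so that both Proposition~\ref{prop:5} and~\eqref{eq:19} stay applicable. As a sanity check, the lower bound $\deg(\prod_{i=1}^{n}\div(s_{i}))\ge 0$ can also be read off directly from the effectivity of this $0$-cycle (indeed from Proposition~\ref{prop:intersection}), so the real content of the corollary is the upper bound, for which the iteration of Proposition~\ref{prop:5} above is precisely what is needed.
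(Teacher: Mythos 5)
Your proof is correct and is precisely the intended argument: the paper states Corollary~\ref{cor:1} without proof, as an immediate consequence of Proposition~\ref{prop:5} obtained by the iteration you describe. Your bookkeeping of the order in which the $\div(s_{i})$ are peeled off matches the recursive definition of $\prod_{i=1}^{n}\div(s_{i})$, and you correctly track effectivity so that Proposition~\ref{prop:5} (and Kleiman's inequality~\eqref{eq:19}) remain applicable at every stage.
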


We now turn to toric varieties. Let $M\simeq \ZZ^{n}$ be a lattice and
set
\begin{equation}\label{eq:29}
  K[M]\simeq K[x_{1}^{\pm1},\dots, x_{n}^{\pm1}] \and \TT=\Spec(K[M])\simeq \GG_{\text{m},K}^n
\end{equation}
for its group $K$-algebra and algebraic torus over $K$, respectively.
The elements of $M$ correspond to the characters of $\TT$ and, given
$m\in M$, we denote by $\chi^{m}\in \Hom(\TT, \GG_{\text{m},K})$ the
corresponding character. Set also $M_{\mathbb{R}}=M\otimes\mathbb{R}$.

Let $N=M^{\vee} \simeq\mathbb{Z}^n$ be the dual lattice and set 
$N_{\mathbb{R}}=N\otimes\mathbb{R}$.  Given a complete fan~$\Sigma$ in
$N_\RR$, we denote by $X_\Sigma$ the associated toric variety with
torus~$\TT$.  It is a proper normal variety over $K$ containing $\TT$
as a dense open subset. When the fan $\Sigma$ is \emph{regular}, in the sense
that it is induced by a piecewise linear concave function on $N_{\RR}$,
the toric variety $X_{\Sigma}$ is projective.

Set $X=X_{\Sigma}$ for short. Let $D$ be a toric Cartier divisor on
$X$, and denote by $\Psi_D$ its associated virtual support function on
$\Sigma$. This is a piecewise linear function
$\Psi_{D}\colon N_{\RR}\to \RR$ satisfying that, for each cone
$\sigma\in \Sigma$, there exists $m\in M$ such that, for all $u\in
\sigma$, 
\begin{displaymath}
\Psi_{D}(u)=\langle m, u\rangle.  
\end{displaymath}
The condition that $\Psi_D$ is concave is both equivalent to the
conditions that $D$ is nef and that the line bundle $\cO(D)$ is
globally generated.  This line bundle $\mathcal{O}(D)$ is a subsheaf
of the sheaf of rational functions of $X$.  For each $m\in M$, the
character $\chi^{m}$ is a rational function of $X$, and so it induces
a rational section of $\cO(D)$ that is regular and nowhere vanishing
on $\TT$.  The rational section corresponding to the point $m=0$ is
called the \emph{distinguished rational section} of $\cO(D)$ and
denoted by $s_{D}$.

The toric Cartier divisor $D$ also determines the lattice polytope of
$M_{\RR}$ given~by
\begin{equation*}
\Delta_D=\lbrace x\in M_{\RR}\mid\; \langle  x, u\rangle \geq \Psi_D( u)\mbox{ for every } u\in N_{\RR}\rbrace\mbox{.}  
\end{equation*}
A  rational section corresponding to a  point $m\in M$ is global if and
only if $m\in\Delta_{D}$. The global sections corresponding to the
lattice points of $\Delta_{D}$ form  a
$K$-basis for the space of global sections of $\cO(D)$.
Identifying each character $\chi^{m}$ with the corresponding rational
section $\varsigma_m$ of~$\cO(D)$, we have the decomposition
\begin{equation}
  \label{eq:5}
\Gamma(X,\mathcal{O}(D))=\displaystyle\bigoplus_{
  m\in\Delta_{D}\cap M}K \cdot \varsigma_{m}\mbox{.}
\end{equation}

Now let $\Delta_{1},\dots, \Delta_{r}$ be lattice polytopes in
$M_{\RR}$. For each $\Delta_{i}$, we consider its \emph{support function},
which is the piecewise linear concave function with lattice slopes
$\Psi_{\Delta_{i}} \colon N_{\RR}\to \RR$ given by
\begin{equation}
  \label{eq:12}
\Psi_{\Delta_{i}} (u)= \min_{ x\in \Delta_{i}}\langle  x, u\rangle\mbox{.}  
\end{equation}

Let $\Sigma$ be a regular complete fan in $N_{\RR}$ compatible with
the collection $\Delta_{1},\dots, \Delta_{r}$, in the sense that the
$\Psi_{\Delta_{i}}$'s are virtual support functions on $\Sigma$. Such
a fan can be constructed by taking any regular complete fan in
$N_{\RR}$ refining the complex of cones that are normal to the faces
of $\Delta_{i}$, for all $i$. Let $X$ be the toric variety
corresponding to this fan and $D_{i}$ the toric Cartier divisor on $X$
corresponding to these virtual support functions.  By construction,
$\Psi_{\Delta_i}$ is concave. Hence~$D_i$ is nef and $\cO(D_i)$ is
globally generated, and its associated polytope coincides
with~$\Delta_{i}$.

Let $\vol_M$ be the Haar measure on $M_\RR$ such that $M$ has
covolume~$1$, and take $r=n$. The \emph{mixed volume} of $\Delta_1,\ldots,\Delta_n$
is defined as the alternating sum
\begin{equation*}
\MV_M(\Delta_1,\ldots,\Delta_n)=\sum_{j=1}^n(-1)^{n-j}\hspace{-3mm}\sum_{1\leq i_1 <\cdots <i_j\leq n}
\vol_M(\Delta_{i_1}+\cdots +\Delta_{i_j}).
\end{equation*}
A fundamental result in toric geometry states that the
degree of a toric variety with respect to a family of  nef toric
Cartier divisors is given by  the mixed volume of its
 polytopes \cite[\S5.4]{Ful93}. In our present setting,
this amounts to the formula
\begin{equation}\label{eq:degMV}
\deg_{D_1,\ldots,D_n}(X)=\MV_M(\Delta_1,\ldots,\Delta_n).
\end{equation}

We turn to 0-cycles of the torus defined by families of Laurent polynomials.

\begin{definition}
  \label{def:2}
  Let $f_1,\ldots,f_n\in K[M]$ and denote by $V(f_1,\ldots,f_n)_0$ the
  set of isolated closed points in the variety defined by this family
  of Laurent polynomials. For each $p\in V(f_{1},\dots, f_{n})_{0}$,
  let $\mathfrak{m}_p$ be the maximal ideal of $K[M]$ corresponding
  to~$p$ and set
\[
\mu_p=\dim_K(K[M]_{\mathfrak{m}_p}/(f_1,\ldots,f_n)).
\]
The
\emph{0-cycle associated to $f_{1},\dots, f_{n}$} is defined as
\[
Z(f_1,\ldots,f_n)=\sum_{p\in V(f_1,\ldots,f_n)_0}\mu_p \, p\in Z_0(\TT).
\]
\end{definition}

Let $f=\sum_{m\in M}\alpha_{m}\chi^{m}\in K[M]$ be a Laurent
polynomial. Its \emph{support} is defined as the finite subset of $M$
of the exponents of its nonzero terms, that is
$\supp(f)=\{m\mid \alpha_{m}\ne 0\}$. The \emph{Newton polytope} of
$f$ is the lattice polytope in $M_{\RR}$ given by the convex hull of
its support, that is $\newton(f)=\conv(\supp(f))$.

\begin{proposition}\label{prop:0cycles}
  Let $f_1,\ldots,f_n\in K[M]$.  Let~$\Sigma$ be a regular complete
  fan in $N_{\RR}$ compatible with the Newton polytopes of the
  $f_{i}$'s and, for $i=1,\dots, n$, let $D_{i}$ be the Cartier
  divisor on $X_{\Sigma}$ associated to $\newton(f_i)$ and $s_{i}$ the
  global section of $\cO(D_{i})$ corresponding to~$f_{i}$ as in
  \eqref{eq:5}.  Write $\prod_{i=1}^{n}\div(s_i)=\sum \nu_p \, p$,
  where the sum is over the closed points of $X_{\Sigma}$. Then
  \begin{enumerate}
  \item \label{item:1} for every $p\in V(f_1,\ldots,f_n)_0$, we have
    $\nu_p=\dim_K(K[M]_{\mathfrak{m}_p}/(f_1,\ldots,f_n))$;
\item \label{item:2} the inequality $Z(f_1,\ldots,f_n)\leq
  \prod_{i=1}^n\div(s_i)$ holds.
  \end{enumerate}
\end{proposition}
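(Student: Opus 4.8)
The plan is to localize the setup at an isolated point $p \in V(f_1,\ldots,f_n)_0$ and reduce claim \eqref{item:1} to the local computation already recorded in Proposition \ref{prop:intersection}; then to deduce claim \eqref{item:2} by comparing the effective cycle $\prod_{i=1}^n \div(s_i)$ with $Z(f_1,\ldots,f_n)$ point by point. First I would note that $X_{\Sigma}$ is a smooth (hence Cohen--Macaulay) projective toric variety of pure dimension $n$, so Proposition \ref{prop:intersection} applies: the $0$-cycle $\prod_{i=1}^n \div(s_i) = \sum_p \nu_p \, p$ is effective, and for each \emph{isolated} closed point $p$ of $\bigcap_{i=1}^n |\div(s_i)|$ one has $\nu_p = \dim_K(\cO_{p,X_\Sigma}(U)/(f_1,\ldots,f_n))$, where $U$ is a trivializing affine neighborhood of $p$ and the $f_i$ are the defining functions of the $s_i$ on $U$.

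For \eqref{item:1}, I would fix $p \in V(f_1,\ldots,f_n)_0$. Since $p$ lies in $\TT$ and the distinguished rational section $s_{D_i}$ is regular and nowhere vanishing on $\TT$, on a trivializing neighborhood $U \subseteq \TT$ of $p$ the section $s_i$ is defined by the Laurent polynomial $f_i$ itself (up to the invertible factor relating $s_i$ to $s_{D_i}$, which does not change the ideal $(f_1,\ldots,f_n)$ in $\cO_{p,X_\Sigma}(U)$). Because $p$ is an isolated point of $V(f_1,\ldots,f_n)$, it is an isolated point of $\bigcap_i |\div(s_i)|$, so Proposition \ref{prop:intersection} gives $\nu_p = \dim_K(\cO_{p,X_\Sigma}(U)/(f_1,\ldots,f_n))$. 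Finally, since $U$ is an open subset of $\TT = \Spec(K[M])$ containing $p$, the local ring $\cO_{p,X_\Sigma}(U)$ is canonically $K[M]_{\mathfrak m_p}$, and hence $\nu_p = \dim_K(K[M]_{\mathfrak m_p}/(f_1,\ldots,f_n)) = \mu_p$, which is exactly \eqref{item:1}.

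For \eqref{item:2}, I would argue that $\prod_{i=1}^n \div(s_i) - Z(f_1,\ldots,f_n)$ is effective by checking the coefficient of every closed point $q$ of $X_\Sigma$. If $q \notin V(f_1,\ldots,f_n)_0$, the coefficient of $q$ in $Z(f_1,\ldots,f_n)$ is $0$, while the coefficient $\nu_q$ in $\prod_i \div(s_i)$ is $\ge 0$ by effectivity (Proposition \ref{prop:intersection}); so the difference is $\ge 0$ at $q$. If $q \in V(f_1,\ldots,f_n)_0$, then by \eqref{item:1} the coefficient of $q$ in $Z(f_1,\ldots,f_n)$ equals $\mu_q = \nu_q$, so the difference vanishes at $q$. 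In all cases the coefficient of $q$ in the difference is nonnegative, giving $Z(f_1,\ldots,f_n) \le \prod_{i=1}^n \div(s_i)$.

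The main obstacle, and the only point requiring care, is the identification in \eqref{item:1} of the defining function of $s_i$ near $p$ with the Laurent polynomial $f_i$: one must track the relation between the global section $s_i$ corresponding to $f_i$ via the decomposition \eqref{eq:5} and the distinguished rational section $s_{D_i}$, and verify that on a trivializing neighborhood inside $\TT$ the local equation of $\div(s_i)$ is $f_i$ up to a unit of $\cO_{p,X_\Sigma}(U)$ — so that the ideals, and hence the colengths, agree. Once this bookkeeping is done, everything else is a direct appeal to Proposition \ref{prop:intersection} and the definition of $Z(f_1,\ldots,f_n)$.
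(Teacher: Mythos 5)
Your overall strategy matches the paper's: both reduce claim~\eqref{item:1} to Proposition~\ref{prop:intersection} and obtain~\eqref{item:2} from effectivity. Your write-up is more explicit than the paper's, in particular in how you track the trivialization by the distinguished section $s_{D_i}$ and how you compare coefficients point by point for~\eqref{item:2}; those details are correct.

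There is, however, one step whose justification does not follow from what the paper sets up. You assert that $X_\Sigma$ is \emph{smooth}, ``hence Cohen--Macaulay,'' and apply Proposition~\ref{prop:intersection} with $X=X_\Sigma$. But the paper's ``regular fan'' is defined to mean that $\Sigma$ is induced by a piecewise linear concave function on $N_\RR$ --- i.e.\ that $X_\Sigma$ is \emph{projective} --- and says nothing about smoothness. A projective, complete toric variety compatible with the Newton polytopes need not be smooth (one does not resolve singularities in the construction described after~\eqref{eq:12}). So ``smooth'' is not available, and as stated your invocation of Proposition~\ref{prop:intersection} on $X_\Sigma$ has a gap. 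Two standard repairs: either quote Hochster's theorem that normal toric varieties are Cohen--Macaulay (so $X_\Sigma$ is CM regardless of smoothness), or do what the paper does --- apply Proposition~\ref{prop:intersection} to $X=\TT$, which is manifestly smooth and Cohen--Macaulay. The latter is cleaner and is exactly the reason the paper phrases the argument in terms of $\TT$: since $\bigcap_i |\div(s_i)|\cap\TT=V(f_1,\ldots,f_n)$ and $p$ lies in the open subset $\TT\subset X_\Sigma$, the multiplicity $\nu_p$ in $\prod_i\div(s_i)$ is a local quantity computed on any open neighborhood of $p$, so restricting to the torus suffices. If you replace the smoothness claim with one of these two justifications, the rest of your argument goes through unchanged.
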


\begin{proof}
  We have that
  $\bigcap_{i=1}^{n}|\div(s_{i})| = V(f_{1},\dots, f_{n})$.  Since
  $\TT$ is Cohen-Macaulay, Proposition \ref{prop:intersection} gives
  the first statement.  Since the sections $s_{i}$ are global, the
  0-cycle $ \prod_{i=1}^n\div(s_i)$ is effective. Hence, the second
  statement follows directly from the first one.
\end{proof}

Finally, we prove the version of the Bern\v{s}tein-Ku\v{s}nirenko
theorem in \eqref{eq:1}.

\begin{theorem}\label{thm:BKK}
  Let $f_1,\ldots,f_n\in K[M]$.  Then
\[
\deg(Z(f_1,\ldots,f_n))\leq \MV_M(\newton(f_1),\ldots,\newton(f_n)).
\]
\end{theorem}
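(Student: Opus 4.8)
The plan is to reduce the inequality, via the geometric dictionary developed above, to the combination of Proposition \ref{prop:0cycles}, Corollary \ref{cor:1}, and the toric degree formula \eqref{eq:degMV}. The point is that all the work has already been done: a family of Laurent polynomials gives sections of nef toric divisors on a suitable compactification, the associated intersection $0$-cycle dominates $Z(f_1,\dots,f_n)$, and the degree of that intersection is controlled by the mixed volume.

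First I would fix the geometry. Choose a regular complete fan $\Sigma$ in $N_{\RR}$ compatible with the Newton polytopes $\newton(f_1),\dots,\newton(f_n)$, which exists by the refinement construction recalled before \eqref{eq:degMV}; then $X=X_{\Sigma}$ is a \emph{projective} toric variety with torus $\TT$, so that the hypotheses in force for the second half of the section apply. For $i=1,\dots,n$, let $D_i$ be the toric Cartier divisor on $X$ associated to $\newton(f_i)$: it is nef, $\cO(D_i)$ is globally generated, and its polytope is $\newton(f_i)$. Let $s_i\in\Gamma(X,\cO(D_i))$ be the global section corresponding to $f_i$ under the decomposition \eqref{eq:5}.

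Next I would chain the three ingredients. By Proposition \ref{prop:0cycles}(\ref{item:2}), $Z(f_1,\dots,f_n)\le \prod_{i=1}^{n}\div(s_i)$ as effective $0$-cycles of $X$; since the degree of an effective $0$-cycle is, by Definition \ref{def:7}(\ref{item:5}), a sum of nonnegative terms $m_p\,[\K(p):K]$, this passes to $\deg(Z(f_1,\dots,f_n))\le \deg\big(\prod_{i=1}^{n}\div(s_i)\big)$. By Corollary \ref{cor:1}, applied to the nef Cartier divisors $D_1,\dots,D_n$ and the global sections $s_i$ on the projective variety $X$, one has $\deg\big(\prod_{i=1}^{n}\div(s_i)\big)\le \deg_{D_1,\dots,D_n}(X)$. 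Finally \eqref{eq:degMV} gives $\deg_{D_1,\dots,D_n}(X)=\MV_M(\newton(f_1),\dots,\newton(f_n))$, and combining the three displayed relations yields the claimed bound.

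There is essentially no obstacle: the only steps needing a word of care are that the fan can be taken regular (hence $X_{\Sigma}$ projective, as required by Corollary \ref{cor:1}) and the monotonicity of the degree under $\le$ for effective $0$-cycles, which is immediate. I would also remark on where the slack in the inequality comes from, to motivate the equality statement in \eqref{eq:1}: the points of $X\setminus\TT$ and the positive-dimensional components of the zero set of the $f_i$ may contribute to $\prod_{i=1}^{n}\div(s_i)$ but not to $Z(f_1,\dots,f_n)$, and the intersection need not be transverse; for $f_i$ generic with respect to their Newton polytopes neither phenomenon occurs and the bound becomes an equality.
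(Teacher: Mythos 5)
Your proof is correct and follows exactly the paper's argument: choose a regular complete fan compatible with the Newton polytopes, invoke Proposition \ref{prop:0cycles}\eqref{item:2} to dominate $Z(f_1,\dots,f_n)$ by $\prod_i\div(s_i)$, apply Corollary \ref{cor:1}, and conclude with the degree--mixed volume formula \eqref{eq:degMV}. You merely spell out the monotonicity of the degree and the projectivity hypothesis a bit more explicitly, but the route is identical.
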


\begin{proof}
  This follows  from
  Proposition~\ref{prop:0cycles}\eqref{item:2}, Corollary~\ref{cor:1}
  and the formula~\eqref{eq:degMV}.
\end{proof}

\section{Adelic fields and finite extensions}\label{sec:adel-fields-extens}

In this section, we consider adelic fields following \cite{BPS14}. We
also give a new notion of adelic field extension that behaves better
than the one in \emph{loc. cit.}.  With this definition, the product
formula is preserved when passing to finite extensions.

\begin{definition}
  \label{def:1}
  Let $\mathbb{K}$ be an infinite field and $\mathfrak{M}$ a set of~places.
  Each place $v\in\mathfrak{M}$ is a pair consisting of an absolute
  value $\abs{\cdot}_v$ and a positive real weight~$n_v$.  We say that
  $(\mathbb{K},\mathfrak{M})$ is an \emph{adelic field} if
\begin{enumerate}
\item for each $v\in\mathfrak{M}$, the absolute value $\abs{ \cdot}_v$ is either Archimedean or associated to a nontrivial discrete valuation;
\item for each $\alpha\in\mathbb{K}^\times$, we have that $\abs{\alpha}_v=1$ for all but a finite number of~$v\in\mathfrak{M}$.
\end{enumerate}
An adelic field $(\mathbb{K},\mathfrak{M})$  satisfies the
\emph{product formula} if, for every $\alpha\in\mathbb{K}^\times$, 
\[
\displaystyle\prod_{v\in\mathfrak{M}}\abs{\alpha}_v^{n_v}=1\mbox{.}
\]
\end{definition}

Let $(\KK, \fM)$ be an adelic field. For each place
$v\in\mathfrak{M}$, we denote by $\KK_v$ the {completion} of $\KK$
with respect to the absolute value~$\abs{\cdot}_v$. By a theorem of
Ostrowski, if $v$ is Archimedean, then $\KK_{v}$ is isomorphic to
either $\RR$ or $\CC$ \cite[Chapter 3, Theorem~1.1]{Cassels:lf}. In
particular, an adelic field has only a finite number of Archimedean
places.

\begin{example}\label{ex:numberfield}
  Let $\MQ$ be the set of places of $\QQ$ consisting of the
  Archimedean and $p$-adic absolute values of $\QQ$, normalized in the
  standard way, and with all the weights equal to~$1$. The adelic
  field $(\QQ,\MQ)$ satisfies the product formula.
\end{example}

\begin{example}\label{ex:funtionfield}
  Let $\K(C)$ denote the function field of a regular projective curve
  $C$ over a field $\kappa$.  To each closed point $v\in C$ we associate
  the absolute value and weight given, for $f\in \K(C)^{\times}$, by
  \begin{equation}
    \label{eq:11}
\abs{f}_v=c_\kappa^{-\ord_v(f)} \and 
n_v=[\K(v):\kappa],
  \end{equation}
where $\ord_v(f)$ denotes the order of vanishing of $f$ at~$v$ and
\begin{equation}
  \label{eq:23}
c_\kappa=
\left\lbrace
\begin{array}{ll}
\e&\mbox{if } \# \kappa=\infty\mbox{,}\\
\# \kappa&\mbox{if } \# \kappa<\infty\mbox{.}
\end{array}
\right.
\end{equation}
The set of places $\mathfrak{M}_{\K(C)}$ is indexed by the closed
  points of $C$, and consists of these absolute values and weights.
  The pair $(\K(C),\mathfrak{M}_{\K(C)})$ is an adelic field which
  satisfies the product formula.
\end{example}

\begin{lemma}\label{prop:artiniandecomposition}
  Let $\FF$ be a
  finite extension of~$\KK$ and $v\in \fM$.  Then
  \begin{equation}
    \label{eq:20}
\FF\otimes_{\KK}\KK_v\simeq\bigoplus_{w}E_w\mbox{,}    
  \end{equation}
  where the sum is over the absolute values $|\cdot|_{w}$ on $\FF$
  whose restriction to $\KK_{v}$ coincides with~$|\cdot|_{v}$, and
  where the $E_w$'s are local Artinian $\KK_v$-algebras with maximal
  ideal~$\fp_w$. For each $w$, we have $E_w/\fp_w\simeq\FF_w$.
\end{lemma}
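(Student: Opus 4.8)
The plan is to prove Lemma~\ref{prop:artiniandecomposition} by combining the standard structure theory of finite algebras over a complete field with the correspondence between primes of $\FF \otimes_\KK \KK_v$ and extensions of the absolute value $|\cdot|_v$ to $\FF$. First I would write $\FF = \KK(\theta)$ for a primitive element (possible since $\FF/\KK$ is separable if we are in characteristic zero, and in general one can still reduce to a monogenic situation or argue directly with the minimal polynomial), so that $\FF \otimes_\KK \KK_v \simeq \KK_v[T]/(g(T))$ where $g \in \KK[T]$ is the minimal polynomial of $\theta$. Factoring $g = \prod_w g_w$ into monic irreducibles over $\KK_v$ (these are pairwise coprime precisely when $g$ is separable; in the inseparable case one gets $g = \prod g_w^{e_w}$ and the same argument works with the $E_w$ no longer fields), the Chinese Remainder Theorem gives $\FF \otimes_\KK \KK_v \simeq \bigoplus_w \KK_v[T]/(g_w^{e_w}) =: \bigoplus_w E_w$. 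Each $E_w$ is a finite-dimensional local $\KK_v$-algebra, hence Artinian, with residue field $\FF_w := \KK_v[T]/(g_w) = \KK_v(\theta_w)$ a finite extension of $\KK_v$.

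The second step is to identify the index set $\{w\}$ with the set of absolute values on $\FF$ extending $|\cdot|_v$. Since $\KK_v$ is complete with respect to $|\cdot|_v$, each finite extension field $\FF_w/\KK_v$ carries a unique absolute value extending $|\cdot|_v$ (Ostrowski / the theory of extensions of complete valued fields, e.g.\ \cite[Chapter~7]{Cassels:lf} or the relevant section of \cite{BPS14}). Composing with the natural map $\FF \hookrightarrow \FF \otimes_\KK \KK_v \twoheadrightarrow E_w \twoheadrightarrow \FF_w$ yields an absolute value $|\cdot|_w$ on $\FF$ restricting to $|\cdot|_v$ on $\KK_v$, hence in particular on $\KK$. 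Conversely, given any absolute value $|\cdot|_w$ on $\FF$ extending $|\cdot|_v$, the completion $\FF_w$ is a finite extension of $\KK_v$, and the composite $\FF \otimes_\KK \KK_v \to \FF_w$ factors through a quotient of $\FF \otimes_\KK \KK_v$ by a maximal ideal, which picks out one of the summands $E_w$; one checks this sets up a bijection, using that distinct summands give inequivalent absolute values because their product topologies on $\FF \otimes_\KK \KK_v$ differ (or, more concretely, because the idempotents separating the summands are $\KK$-rational limits that behave differently under the two valuations). This also yields $E_w / \fp_w \simeq \FF_w$ since $\fp_w$ is the nilradical of the local Artinian algebra $E_w$ and the quotient is its residue field.

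The main obstacle I anticipate is not any single deep fact but rather assembling the dictionary cleanly: one must verify that the abstract algebraic decomposition via CRT matches the analytic decomposition via absolute values \emph{on the nose}, i.e.\ that each $E_w$ corresponds to exactly one extension $w \mid v$ and that no two algebra summands collapse to the same absolute value. The cleanest way to do this is to invoke the uniqueness of the extension of $|\cdot|_v$ to each finite extension of the \emph{complete} field $\KK_v$: this immediately gives that each $E_w$ (through its residue field $\FF_w$) determines a well-defined $w$, and a counting/density argument — the embeddings $\FF \hookrightarrow \overline{\KK_v}$ up to $\Gal$-conjugacy over $\KK_v$ are in bijection with the irreducible factors of $g$ over $\KK_v$, and also with the places $w \mid v$ — closes the loop. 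In the write-up I would state these as citations to \cite{Cassels:lf} and \cite{BPS14} rather than reproving them, keeping the proof to the CRT factorization plus the remark that completeness of $\KK_v$ makes the extension of the absolute value to each $\FF_w$ unique and functorial.

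\begin{proof}
Since $\FF/\KK$ is a finite (hence algebraic) extension and $\KK$ is infinite, we may write $\FF=\KK(\theta)$ for a primitive element $\theta$ with minimal polynomial $g\in\KK[T]$, so that $\FF\otimes_{\KK}\KK_v\simeq \KK_v[T]/(g)$. Factor $g=\prod_w g_w^{e_w}$ into powers of distinct monic irreducible polynomials over $\KK_v$. By the Chinese Remainder Theorem,
\begin{displaymath}
  \FF\otimes_{\KK}\KK_v\simeq \bigoplus_w \KK_v[T]/(g_w^{e_w})=:\bigoplus_w E_w,
\end{displaymath}
and each $E_w$ is a finite-dimensional $\KK_v$-algebra, local with maximal ideal $\fp_w=(g_w)/(g_w^{e_w})$, hence Artinian, and with residue field $E_w/\fp_w\simeq \KK_v[T]/(g_w)=:\FF_w$, a finite extension of $\KK_v$.

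Because $\KK_v$ is complete with respect to $|\cdot|_v$, the absolute value $|\cdot|_v$ extends uniquely to each finite extension $\FF_w$ of $\KK_v$; denote this extension by $|\cdot|_w$. Via the composite $\FF\hookrightarrow \FF\otimes_{\KK}\KK_v\twoheadrightarrow E_w\twoheadrightarrow \FF_w$ we obtain an absolute value on $\FF$, again denoted $|\cdot|_w$, whose restriction to $\KK$ agrees with $|\cdot|_v$ and for which $\FF_w$ is the completion of $\FF$. Conversely, any absolute value $|\cdot|_w$ on $\FF$ extending $|\cdot|_v$ has completion $\FF_w$ a finite extension of $\KK_v$, and the induced $\KK_v$-algebra map $\FF\otimes_{\KK}\KK_v\to\FF_w$ factors through the quotient by a maximal ideal, hence through exactly one summand $E_w$; the uniqueness of the extension of $|\cdot|_v$ to $\FF_w$ shows this correspondence is a bijection between the summands and the set of absolute values on $\FF$ restricting to $|\cdot|_v$. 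This proves \eqref{eq:20} together with the stated properties of the $E_w$.
\end{proof}
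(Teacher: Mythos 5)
Your proof relies on writing $\FF = \KK(\theta)$ for a primitive element, which is the crux of a genuine gap. The justification in your proof body — ``Since $\FF/\KK$ is a finite \ldots extension and $\KK$ is infinite, we may write $\FF=\KK(\theta)$'' — is false: $\KK$ being infinite does \emph{not} guarantee a primitive element. The primitive element theorem requires finitely many intermediate fields (equivalently, separability in the crucial cases), and there exist finite, non-monogenic extensions of infinite fields in positive characteristic; for instance $\FF_p(x^p,y^p)\subset\FF_p(x,y)$ is a degree-$p^2$ extension with infinitely many intermediate fields and no primitive element. This matters here because the paper's adelic fields include function fields $\K(C)$ over an arbitrary (possibly imperfect) base field $\kappa$ (Example \ref{ex:funtionfield}) and function fields of higher-dimensional varieties (Example \ref{exm:5}), where such pathologies actually arise. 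Your preamble acknowledges the issue (``one can still reduce to a monogenic situation'') but never substantiates that reduction, and indeed it is not clear how to do so cleanly.

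The paper's proof sidesteps this entirely: it applies the structure theorem for Artinian rings directly to $\FF\otimes_\KK\KK_v$ to obtain $\bigoplus_{i\in I}E_i$, and then invokes a Bourbaki result [Bou64, Chapitre~VI, \S8.2, Proposition~11(b)] asserting that the natural map $\FF\otimes_\KK\KK_v\to\bigoplus_w\FF_w$ is surjective with kernel the nilradical. Comparing residue fields and using uniqueness of the Artinian decomposition then gives the bijection $I\leftrightarrow\{w\}$. This route needs no primitive element and works for any finite extension. Your remaining steps — the CRT decomposition of $\KK_v[T]/(g)$ and the identification of the index set with extensions of $|\cdot|_v$ via uniqueness of the valuation extension over the complete field $\KK_v$ — are fine once a primitive element is available, so your argument \emph{does} correctly prove the lemma under the extra hypothesis that $\FF/\KK$ is simple (e.g.\ separable, or characteristic zero). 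To fix the general case you would either need to replace the monogenicity assumption with the paper's Artinian/Bourbaki argument, or give a careful reduction (say, via a tower $\KK\subset\FF^{\mathrm{sep}}\subset\FF$) that you have not carried out.
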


\begin{proof}
Since $\KK\hookrightarrow \FF$ is a finite extension, the tensor
product $\FF\otimes \KK_{v}$ is an Artinian $\KK_v$-algebra. 
By  the structure theorem for Artinian algebras, 
\[
\FF\otimes_\KK\KK_v\simeq\bigoplus_{i\in I}E_i\mbox{,}
\]
where $I$ is a finite set and the $E_i$'s are local Artinian
$\KK_v$-algebras.  Let $\fp_i$ be the maximal ideal of~$E_i$, for
each~$i$. These are the only prime ideals of $\FF\otimes\KK_{v}$, and
so $\rad(\FF\otimes\KK_v)=\bigcap_{i\in I}\fp_i$.

Each $w$ in the decomposition \eqref{eq:20} corresponds to an absolute value $|\cdot|_{w}$ on $\FF$
extending $|\cdot|_{v}$, and there is a natural inclusion
$\FF\hookrightarrow \FF_{w}$. The diagonal morphism
$\FF\rightarrow\bigoplus_{w}\FF_w$ extends to a map of $\KK_{v}$-vector
spaces
\[
\FF\otimes_\KK\KK_v\longrightarrow\bigoplus_{w}\FF_w\mbox{.}
\]
By \cite[Chapitre VI, §8.2 Proposition~11(b)]{Bou64}, this morphism is
surjective and its kernel is the radical ideal of $\FF\otimes\KK_v$.
Therefore
\begin{equation}
  \label{eq:10}
\bigoplus_{i\in I}E_i/\fp_i=\bigg(\bigoplus_{i\in
  I}E_i\bigg)\bigg\slash \rad(\FF\otimes\KK_v)\simeq \bigoplus_{w}\FF_w\mbox{.}
\end{equation}
The summands in both extremes of \eqref{eq:10} are fields over
$\KK_{v}$, and so local Artinian $\KK_{v}$-algebras.  By the
uniqueness of the decomposition in the structure theorem for Artinian
algebras, there is a bijection between the elements in $I$ and the
$w$'s, identifying each $i\in I$ with the unique $w$ such that
$E_i/\fp_i\simeq\FF_w$.
\end{proof}

The following definition was introduced by Gubler in the context of
$M$-fields, see \cite[Remark 2.5]{Gubler:hsmf}.

\begin{definition}
\label{def:3}
Let $(\KK,\mathfrak{M})$ be an adelic field and $\FF$ a finite
extension of~$\KK$.  For every place $v\in\mathfrak{M}$, we denote by
$\mathfrak{N}_v$ the set of absolute values~$\abs{\cdot}_{w}$ on~$\FF$ that
extend~$\abs{\cdot}_v$ with weight given by
\[
n_w=\frac{\dim_{\KK_v}(E_w)}{[\FF:\KK]}\, n_v\mbox{,}
\]
where the $E_w$'s are the local Artinian $\KK_v$-algebras in the
decomposition of $\FF\otimes_\KK\KK_v$ from
Lemma~\ref{prop:artiniandecomposition}. Set
$\mathfrak{N}=\bigsqcup_{v\in\mathfrak{M}}\mathfrak{N}_v$. The pair
$(\FF,\mathfrak{N})$ is an adelic field. The adelic fields of this
form are called \emph{adelic field extensions} of
$(\KK,\mathfrak{M})$.
\end{definition}

\begin{remark}
  \label{rem:1}
With  notation as in Lemma~\ref{prop:artiniandecomposition}, 
\[
\dim_{\KK_v}(E_w)=l_{E_w}(E_w)[\FF_w:\KK_v], 
\]
where $l_{E_w}(E_w)$ is the length of $E_{w}$ as a module over itself.
This follows from \cite[Lemma~A.1.3]{Ful98} applied to the morphism
$\KK_v\rightarrow E_w$.  Hence, the weights in Definition~\ref{def:3}
can be alternatively written as
\[
n_w=l_{E_w}(E_w)\frac{[\FF_w:\KK_v]}{[\FF:\KK]}\, n_v\mbox{.}
\]
\end{remark}

\begin{proposition}\label{prop:finiteextension}
  Let $(\KK,\mathfrak{M})$ be an adelic field and $(\FF,\mathfrak{N})$
  an adelic field extension of~$(\KK,\mathfrak{M})$. Then
\begin{enumerate}
\item \label{item:3} the equality $\sum_{w\in \mathfrak{N}_{v}}n_w=n_v$
  holds for every place $v\in\mathfrak{M}$;
\item \label{item:4} if $(\KK,\mathfrak{M})$ satisfies the product
  formula, then $(\FF,\mathfrak{N})$ also does.
\end{enumerate}
\end{proposition}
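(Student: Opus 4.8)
The plan is to prove \eqref{item:3} first, as \eqref{item:4} will follow from it together with standard formulae relating absolute values in a finite extension. For \eqref{item:3}, I would start from the decomposition $\FF\otimes_{\KK}\KK_v\simeq\bigoplus_{w\in\fN_v}E_w$ of Lemma~\ref{prop:artiniandecomposition} and simply take $\KK_v$-dimensions on both sides. Since $\dim_{\KK_v}(\FF\otimes_{\KK}\KK_v)=[\FF:\KK]$, this yields $\sum_{w\in\fN_v}\dim_{\KK_v}(E_w)=[\FF:\KK]$. Multiplying by $n_v/[\FF:\KK]$ and using the definition $n_w=\dim_{\KK_v}(E_w)\,n_v/[\FF:\KK]$ from Definition~\ref{def:3} gives $\sum_{w\in\fN_v}n_w=n_v$ immediately. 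This part is essentially bookkeeping.

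For \eqref{item:4}, take $\alpha\in\FF^{\times}$; the goal is $\prod_{w\in\fN}|\alpha|_w^{n_w}=1$, where $\fN=\bigsqcup_{v\in\fM}\fN_v$. Grouping the product over $w$ according to the place $v\in\fM$ that $w$ lies above, it suffices to show that for each $v$,
\begin{equation*}
\prod_{w\in\fN_v}|\alpha|_w^{n_w}=|N_{\FF/\KK}(\alpha)|_v^{n_v/[\FF:\KK]}.
\end{equation*}
Indeed, once this is established, taking the product over all $v\in\fM$ and using that $(\KK,\fM)$ satisfies the product formula applied to $N_{\FF/\KK}(\alpha)\in\KK^{\times}$ gives the result. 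To prove the displayed identity, I would use the classical fact (e.g.\ from \cite{Cassels:lf} or \cite{Bou64}) that, writing $\FF\otimes_{\KK}\KK_v\simeq\bigoplus_w E_w$ with residue fields $\FF_w$, the norm $N_{\FF/\KK}(\alpha)$, viewed in $\KK_v$, equals $\prod_w N_{E_w/\KK_v}(\alpha)$, and that $N_{E_w/\KK_v}(\alpha)=N_{\FF_w/\KK_v}(\alpha)^{l_{E_w}(E_w)}$ since $E_w$ has length $l_{E_w}(E_w)$ over itself with residue field $\FF_w$. Combined with the standard local norm relation $|N_{\FF_w/\KK_v}(\alpha)|_v=|\alpha|_w^{[\FF_w:\KK_v]}$ and the expression for the weight from Remark~\ref{rem:1}, namely $n_w=l_{E_w}(E_w)\,[\FF_w:\KK_v]\,n_v/[\FF:\KK]$, the two sides match.

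I expect the main obstacle to be handling the non-reduced Artinian factors $E_w$ correctly: when the extension $\FF/\KK$ is inseparable or $v$ ramifies badly, $\FF\otimes_{\KK}\KK_v$ need not be reduced, and one must be careful that the multiplicativity $N_{\FF/\KK}(\alpha)=\prod_w N_{E_w/\KK_v}(\alpha)$ and the length-exponent relation $N_{E_w/\KK_v}(\alpha)=N_{\FF_w/\KK_v}(\alpha)^{l_{E_w}(E_w)}$ are applied in the right generality. This is exactly why the weights in Definition~\ref{def:3} are defined using $\dim_{\KK_v}(E_w)$ rather than $[\FF_w:\KK_v]$, and Remark~\ref{rem:1} records the bridge between the two; the proof should lean on that remark to keep the inseparable and ramified cases uniform. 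The Archimedean places pose no separate difficulty, since there every $E_w$ is a field (equal to $\RR$ or $\CC$) and the formulae degenerate to the familiar ones.
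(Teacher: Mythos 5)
Your proposal is correct and follows essentially the same route as the paper: part (1) by taking $\KK_v$-dimensions in the Artinian decomposition, and part (2) by reducing to the identity $\prod_{w\in\fN_v}|\alpha|_w^{n_w}=|N_{\FF/\KK}(\alpha)|_v^{n_v/[\FF:\KK]}$ via the factorization $N_{\FF/\KK}(\alpha)=\prod_w N_{E_w/\KK_v}(\alpha_w)$, the length relation $N_{E_w/\KK_v}=N_{\FF_w/\KK_v}^{l_{E_w}(E_w)}$, and the local norm relation. The only cosmetic difference is that you invoke $|N_{\FF_w/\KK_v}(\alpha)|_v=|\alpha|_w^{[\FF_w:\KK_v]}$ as a standard fact, whereas the paper unpacks it through embeddings and the separable/inseparable degree decomposition; the substance is identical.
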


\begin{proof}
  From the definition of adelic field extension and
  Lemma~\ref{prop:artiniandecomposition}, 
\[
\sum_{w\in\mathfrak{N}_{v}}n_w=\sum_{w\in\mathfrak{N}_{v}}\frac{\dim_{\KK_v}(E_w)}{[\FF:\KK]}
n_v=
\frac{\dim_{\KK_v}(\FF\otimes\KK_v)}{[\FF:\KK]} n_v=n_v\mbox{,}
\]
which proves statement~\eqref{item:3}. To prove the second statement,
let $\alpha\in\FF^\times$ and consider the multiplication map
$\eta_{\alpha}\colon\FF\rightarrow\FF$ given by
$\eta_{\alpha}(x)= \alpha x$.  The norm
$N_{\FF/\KK}(\alpha)\in\KK^\times$ is defined as the determinant of
this $\KK$-linear map.  Moreover, $\eta_{\alpha}$ extends to the
$\KK_v$-linear map
\[
\eta_{\alpha}\otimes
1_{\KK_v}\colon\FF\otimes\KK_v\longrightarrow\FF\otimes\KK_v, 
\]
which has the same determinant.  Using the  decomposition in
\eqref{eq:20}, write 
${\alpha}\otimes 1_{\KK_v}=(\alpha_w)_{w}$ with $\alpha_{w}\in
E_{w}$. Hence $\eta_{\alpha}\otimes
1_{\KK_v}=\bigoplus_{w} \eta_{\alpha_{w}}$ and
\begin{displaymath}
 N_{\FF/\KK}(\alpha)=\det(\eta_{\alpha}\otimes
1_{\KK_v})=\prod_{w\in\mathfrak{N}_{v}}N_{E_w/\KK_v}(\alpha_w)\mbox{.}
\end{displaymath}
By \cite[Chapitre III, \S9.2, Proposition~1]{Bou70}, 
$N_{E_w/\KK_v}(\alpha_w)=N_{\FF_w/\KK_v}(\alpha_w)^{l_{E_w}(E_w)}$.
Moreover, by \cite[VI Proposition~5.6]{Lan02},
\[
N_{\FF_w/\KK_v}(\alpha_w)=\prod_{\sigma}\sigma(\alpha_w)^{[\FF_w:\KK_v]_i}\mbox{,}
\]
where the product is over the different embeddings $\sigma$ of $\FF_w$
in an algebraic closure of~${\KK}_v$, and $[\FF_w:\KK_v]_i$ denotes
the inseparability degree of the extension $\KK_{v}\hookrightarrow
\FF_w$.  Furthermore, the number of such embeddings is equal to the
separability degree $[\FF_w:\KK_v]_s$. For every embedding~$\sigma$,
we have $\abs{\sigma(\alpha_w)}_v=\abs{\alpha}_w$ because the base
field $\KK_{v}$ is complete. Since
$[\FF_w:\KK_v]_i[\FF_w:\KK_v]_s=[\FF_w:\KK_v]$, we get
\[
\abs{N_{\FF/\KK}(\alpha)}_v^{n_v}
=\prod_{w\in\mathfrak{N}_v}\abs{\sigma(\alpha_w)}_v^{l_{E_w}(E_w) [\FF_w:\KK_v] n_v}
=\prod_{w\in\mathfrak{N}_v}\abs{\alpha}_w^{[\FF:\KK]n_w}
\mbox{.}
\]
Since $N_{\FF/\KK}(\alpha)\in\KK^\times$, if $(\KK,\mathfrak{M})$ satisfies the product formula, then
\[
\prod_{w\in\mathfrak{N}}\abs{\alpha}_w^{n_w}
=\bigg(\prod_{v\in\mathfrak{M}}\abs{N_{\FF/\KK}(\alpha)}_v^{n_v}\bigg)^{\frac{1}{[\FF:\KK]}}
=1\mbox{,}
\]
concluding the proof. 
\end{proof}

\begin{example}
\label{exm:3}
  Let $\FF$ be a number field. This is a separable extension of $\QQ$.
  By \cite[Chapitre VI, \S8.5, Corollaire~3]{Bou64}, we have that
  $\FF\otimes\QQ_v\simeq\bigoplus_{w\in\mathfrak{N}_v}\FF_w$ for all
  $v\in \fM_{\QQ}$. Therefore, the weight associated to each place
  $w\in\mathfrak{N}_{v}$ is
\begin{displaymath}
 {n_w=\frac{[\FF_w:\QQ_v]}{[\FF:\QQ]}}.
\end{displaymath}
\end{example}

\begin{example}
\label{exm:4}
  Let $(\K(C), \fM_{\K(C)})$ be the function field of a regular
  projective curve~$C$ over a field $\kappa$ with the structure of adelic
  field as in Example \ref{ex:funtionfield}.  The places of $\K(C)$
  correspond to the closed points of $C$ with absolute values and
  weights given by \eqref{eq:11}.  Let $\FF$ be a finite extension
  of~$\K(C)$ and $\mathfrak{N}$ the set of places of $\FF$ as in
  Definition~\ref{def:3}.  There is a regular projective curve $B$ over
  $\kappa$ and a finite map $\pi\colon B\to C$  such that the
  extension $\K(C)\hookrightarrow\FF$ identifies  with the morphism
  $\pi^*\colon\K(C)\hookrightarrow\K(B)$. For each place $v\in
  \fM_{\K(C)}$, the absolute values of $\FF$ that extend $|\cdot|_{v}$
  are in bijection with the fiber~$\pi^{-1}(v)$.
  
  For each closed point $v\in C$, the integral closure in $\K(B)$ of
  $\cO_{v,C}$ coincides with
  $\cO_{\pi^{-1}(v), B}$, the local ring of $B$ along the fiber
  $\pi^{-1}(v)$. The ring $\cO_{\pi^{-1}(v), B}$ is of finite type
  over $\cO_{v,C}$. With notation as in
  Lemma~\ref{prop:artiniandecomposition}, by \cite[Chapter VI, \S8.5,
  Corollaire~3]{Bou64}, we have $E_w\simeq\FF_w$ for all
  $w\in\mathfrak{N}_v$.
Hence, the weight of  $w$ is given by 
\begin{displaymath}
n_w=\frac{[\FF_w:\K(C)_v]}{[\FF:\K(C)]}[\K(v):\kappa].  
\end{displaymath}

Let  $e({w/v})$ denote the ramification index of $w$ over~$v$. By \cite[Chapter~VI, \S8.5, Corollaire~2]{Bou64}, we have that
$[\FF_w:\K(C)_v]=e({w/v})\,[\K(w):\K(v)]$. Therefore, for each place
$w\in\mathfrak{N}_{v}$, the weight of $w$ can also be expressed as
\[
n_w=\frac{e({w/v})\,[\K(w):\kappa]}{[\FF:\K(C)]}.
\]
\end{example}

Following \cite{BPS14}, a \emph{global field} is a finite extension of
the field of rational numbers or of the function field of a regular
projective curve, with the structure of adelic field described in
Examples \ref{exm:3} and \ref{exm:4}.  For these fields,
Proposition~\ref{prop:finiteextension} is already a known result, see
for instance \cite[Proposition~2.1]{BPRS15}.

By a result of Artin and Whaples, global fields can be characterized
as the adelic fields having an absolute value that is either
Archimedean or associated to a discrete valuation whose residue field
has finite order over the field of constants \cite[Theorems 2 and
3]{ArtinWhaples:acfpf}. 

Function fields of varieties of higher dimension provide examples of
adelic fields satisfying the product formula, and that are not global
fields.

\begin{example}
  \label{exm:5}
Let $\K(S)$ be the function field of  an irreducible normal
variety~$S$ over a field $\kappa$ of dimension $s\ge 1$,
and $E_{1},\dots, E_{s-1}$  nef Cartier divisors on
$S$. Set $  S^{(1)}$ for the set of irreducible hypersurfaces of
$S$. For each $V\in S^{(1)}$, the local ring $\cO_{V,S}$ is a discrete
valuation ring. We associate to $V$ the absolute value and weight
given, for $f\in \K(S)$,  by
\begin{displaymath}
  |f|_{V}=c_{\kappa}^{-\ord_{V}(f)} \and n_{v}=\deg_{E_{1}, \dots, E_{s-1}}(V),
\end{displaymath}
with $c_{\kappa}$ as in \eqref{eq:23}. The set of places
$\fM_{\K(S)}$ is indexed by $S^{(1)}$, and consists of these absolute
values and weights.
For $f\in \K(S)^{\times}$, 
\begin{displaymath}
  \sum_{V\in S^{(1)}}n_{V}\log|f|_{v}= \log(c_{k})   \hspace{-1mm}\sum_{V\in
    S^{(1)}}\deg_{E_{1}, \dots, E_{s-1}}(V) \ord_{V}(f) =
  \deg_{E_{1}, \dots, E_{s-1}}(\div(f))=0,
\end{displaymath}
because the Cartier divisor $\div(f)$ is principal. Hence $(\K(S),
\fM_{\K(S)})$ satisfies the product formula.
\end{example}

\section{Height of cycles} \label{sec:metr-divis-heights}

In this section, we introduce a notion of global height for cycles of
a variety over an adelic field, with respect to a family of metrized
divisors generated by small sections. We also recall the notion of
local height of cycles from \cite[Chapter 1]{BPS14} and give a more
explicit description of this construction in the 0-dimensional case.

Let $(\KK,\fM)$ be an adelic field satisfying the product formula, and
$X$ a normal projective variety over $\KK$. For each place $v\in \fM$,
we denote by $\Xan_{v}$ the $v$-adic analytification of $X$. In the
Archimedean case, if $\KK_v\simeq \CC$, then $\Xan_{v}$ is an analytic
space over~$\CC$ whereas, if $\KK_v\simeq \RR$, then $\Xan_{v}$ is an
analytic space over $\RR$, that is, an analytic space over $\CC$
together with an antilinear involution, as explained in~\cite[Remark
1.1.5]{BPS14}.  In the non-Archimedean case, $\Xan_{v}$ is a Berkovich
space over $\KK_{v}$ as in \cite[\S1.2]{BPS14}.

Fix $v\in \fM$ and set
\begin{displaymath}
 X_{v}=X \times \Spec(\KK_{v}). 
\end{displaymath}
Given a 0-cycle $Y$ of $X_{v}$, a usual construction in Arakelov
geometry associates a signed measure on $X_{v}^{\an}$, denoted by
$\delta_{Y}$, that is supported on $|Y|^{\an}$ and has total mass
equal to $\deg(Y)$, see for instance \cite[Definition 1.3.15]{BPS14}
for the non-Archimedean case.  In what follows, we explicit this
construction.

Let  $q$ be a closed point of $X_{v}$. 
The function field $\K(q)$ is a
finite extension of $\KK_{v}$ and $\deg(q)= [\K(q):\KK_{v}]$. 
If $v$ is
Archimedean, then  $\deg(q)$ is either equal
to 1 or 2. In the first case, the analytification of $q$ is a
point of $\Xan_{v}$ whereas, in the second case, it is a pair of
conjugate points. If $v$ is non-Archimedean, choose an affine open
neighborhood $U=\Spec(A)$ of $q$ and $A\to \K(q)$ the
corresponding morphism of $\KK_{v}$-algebras. The analytification of  $q$ is the point $q^{\an}\in
U^{\an}\subset X_{v}^{\an}$ corresponding to the multiplicative seminorm  given by the composition
\begin{displaymath}
  A \longrightarrow \K(q)
  \xlongrightarrow[]{|\, \cdot\, |} \RR_{\ge0},
\end{displaymath}
where $|\cdot|$ is the unique extension to $\K(q)$
of the absolute value $|\cdot|_{v}$. 

Since the measure $\delta_{q}$ is supported on the point $q^{\an}$ and
has total mass $\deg(q)$, it follows that
\begin{equation}
  \label{eq:37}
  \delta_{q}=[\K(q):\KK_{v}]\, \delta_{q^{\an}}, 
\end{equation}
where $\delta_{q^{\an}}$ denotes the Dirac delta measure on~$q^{\an}$.
For an arbitrary $0$-cycle $Y$ of~$X_{v}$, the signed measure
$\delta_{Y}$ is obtained from \eqref{eq:37} by linearity. It is a
discrete signed measure of total mass equal to $\deg(Y)$.

Let $D$ be a Cartier divisor on $X$. A \emph{metric} on the analytic
line bundle $\mathcal{O}(D)^{\mathrm{an}}_v$ is an assignment that,
to each open subset $U\subset X_{v}^{\an}$ and local section $s$
on~$U$, associates a continuous function
\begin{displaymath}
  \|s(\cdot)\|_{v}\colon U\longrightarrow \RR_{\ge 0}
\end{displaymath}
that is compatible with restrictions to open subsets, vanishes only
when the local section does, and respects multiplication of local sections by
analytic functions, see \cite[Definitions 1.1.1 and 1.3.1]{BPS14}.
This notion allows to define local heights of 0-cycles. 

\begin{definition} \label{def:4} 
Let $D$ be a Cartier divisor on $X$, and $\|\cdot\|_{v}$ a metric on
$\mathcal{O}(D)^{\mathrm{an}}_v$.
For a  $0$-cycle $Y$ of $X_{v}$ and a rational section $s$ of $\cO(D)$ that
is regular and non-vanishing on the support of $Y$, the \emph{local height}
of $Y$ with respect to the pair $(\|\cdot\|_{v},s)$ is defined~as
\[
\h_{\|\cdot\|_{v}}(Y;s)=
-\int_{X_{v}^{\an}}\log\norm{s}_v \, \delta_{Y}\mbox{.}
\]
\end{definition}

We now study the behavior of these objects with respect to adelic field
extensions. Let $(\FF,\fN)$ be an extension of the adelic field
$(\KK,\fM)$ (Definition \ref{def:3}) and fix a place $w\in \fN_{v}$,
so that $\FF_{w}$ is a finite extension of the local field $\KK_{v}$. 
Let $q$ be a closed point of~$X_{v}$ and consider the subscheme
$q_{w}$ of $X_{w}=X \times \Spec(\FF_{w})$ obtained by base change. 
Decompose
\begin{displaymath}
  \K(q)\otimes_{\KK_{v}} \FF_{w}= \bigoplus_{j\in I}G_{j}
\end{displaymath}
as a finite sum of local Artinian $\FF_{w}$-algebras and, for each
$j\in I$, denote by $q_{j}$ the corresponding closed point of $X_{w}$.
Thus the associated cycle is given by
$ [q_{w}]= \sum_{j\in I}l_{G_{j}}(G_{j}) \, {q_{j}} $.  Hence, by
\eqref{eq:37} and Remark~\ref{rem:1},
\begin{displaymath}
\delta_{[q_{w}]}= \sum_{j\in I}\dim_{\FF_{w}}(G_{j})  \, \delta_{q_{j}^{\an}}.
\end{displaymath}

The inclusion $\KK_{v}\hookrightarrow \FF_{w}$ induces a map of the
corresponding analytic spaces 
\begin{equation}
  \label{eq:38}
\pi\colon  X_{w}^{\an}\longrightarrow X_{v}^{\an}.
\end{equation}
In the non-Archimedean case, this map of Berkovich spaces is defined
locally by restricting seminorms.

The following proposition gives the behavior of the measure
associated to a 0-cycle with respect to field extensions. 

\begin{proposition}
  \label{prop:7}
  With notation as above, let $Y$ be a 0-cycle of $X_{v}$ and set
  $Y_{w}$ for the 0-cycle of $X_{w}$ obtained by base change. Then
\begin{displaymath}
  \pi_{*}\, \delta_{Y_{w}}=\delta_{Y}.
\end{displaymath}
\end{proposition}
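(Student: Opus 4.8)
The plan is to reduce to the case of a single closed point $q$ of $X_v$, since both sides of the desired identity are linear in the $0$-cycle $Y$. So fix such a $q$, and use the notation established just before the statement: decompose $\K(q)\otimes_{\KK_v}\FF_w=\bigoplus_{j\in I}G_j$ into local Artinian $\FF_w$-algebras, so that $[q_w]=\sum_{j\in I}l_{G_j}(G_j)\,q_j$ and, by \eqref{eq:37} together with Remark~\ref{rem:1},
\begin{displaymath}
  \delta_{[q_w]}=\sum_{j\in I}\dim_{\FF_w}(G_j)\,\delta_{q_j^{\an}}.
\end{displaymath}
The point of the proof is therefore to understand how the analytification map $\pi\colon X_w^{\an}\to X_v^{\an}$ of \eqref{eq:38} moves the points $q_j^{\an}$, and with what multiplicities, so that pushing forward the displayed measure recovers $[\K(q):\KK_v]\,\delta_{q^{\an}}=\delta_q$.

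The key claim is that $\pi(q_j^{\an})=q^{\an}$ for every $j\in I$. In the non-Archimedean case this is essentially formal: choosing an affine open $U=\Spec(A)$ around $q$ as in the construction of $q^{\an}$, the point $q_j^{\an}$ is given by the seminorm $A\to \K(q)\to G_j\xrightarrow{|\cdot|}\RR_{\ge0}$ obtained from the (unique) absolute value on the local Artinian $\FF_w$-algebra $G_j$ extending $|\cdot|_w$, and its restriction along $\KK_v\hookrightarrow\FF_w$, i.e.\ its image under $\pi$, is exactly the seminorm $A\to\K(q)\xrightarrow{|\cdot|_v}\RR_{\ge0}$ defining $q^{\an}$. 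In the Archimedean case one argues directly with the finitely many embeddings of $\K(q)$ and of the $G_j$ into $\CC$ lying over the chosen embedding of $\KK_v$; the analytification of $q$ is the corresponding point (or conjugate pair) of $X_v^{\an}$, and base change to $\FF_w$ simply partitions these embeddings according to the factors $G_j$, each of which is a field isomorphic to $\CC$ or $\RR$, so again $\pi$ sends each $q_j^{\an}$ to $q^{\an}$.

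Granting the claim, the pushforward collapses all the Dirac masses onto $q^{\an}$, giving
\begin{displaymath}
  \pi_*\,\delta_{[q_w]}=\Big(\sum_{j\in I}\dim_{\FF_w}(G_j)\Big)\,\delta_{q^{\an}}
  =\dim_{\FF_w}\!\big(\K(q)\otimes_{\KK_v}\FF_w\big)\,\delta_{q^{\an}}
  =[\K(q):\KK_v]\,\delta_{q^{\an}}=\delta_q,
\end{displaymath}
where the middle equality is additivity of dimension over the direct sum decomposition and the penultimate one is the fact that $\dim_{\FF_w}(\K(q)\otimes_{\KK_v}\FF_w)=\dim_{\KK_v}\K(q)=\deg(q)$. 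Summing over the points of $Y$ with their multiplicities yields $\pi_*\,\delta_{Y_w}=\delta_Y$, as claimed. I expect the only real subtlety to be the verification that $\pi(q_j^{\an})=q^{\an}$ handled uniformly in the Archimedean and non-Archimedean cases — in particular checking that in the non-Archimedean case the relevant seminorm on $A$ factors through $\K(q)$ in the way asserted, which amounts to the functoriality of Berkovich analytification under the base change $\Spec(\FF_w)\to\Spec(\KK_v)$ — while the bookkeeping with dimensions and lengths is routine given Remark~\ref{rem:1}.
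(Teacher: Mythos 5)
Your proof is correct and follows essentially the same route as the paper's: reduce by linearity to a single closed point $q$, use the decomposition $\K(q)\otimes_{\KK_v}\FF_w\simeq\bigoplus_j G_j$ and Remark~\ref{rem:1} to write $\delta_{[q_w]}$ as a weighted sum of Dirac masses, observe that $\pi(q_j^{\an})=q^{\an}$ for all $j$, and add up dimensions to recover $[\K(q):\KK_v]$. The only difference is cosmetic: the paper dispatches the key claim $\pi(q_j^{\an})=q^{\an}$ in one line, appealing to ``compatibility of $\pi$ with restriction to subschemes,'' whereas you unwind it explicitly in the non-Archimedean and Archimedean cases; that extra detail is sound and does not change the argument.
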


\begin{proof}
  By the compatibility of the map $\pi$ with restriction to
  subschemes, we have that $\pi(q_{j}^{\an})= q^{\an}$ for all
  $j\in I$. It follows that
\begin{equation*}
  \pi_{*}\, \delta_{[q_{w}]} = \sum_{j\in I}\dim_{\FF_{w}}(G_{j})  \,
  \pi_{*}\, \delta_{q_{j}^{\an}} = 
\bigg( \sum_{j\in I}\dim_{\FF_{w}}(G_{j})  \bigg) \delta_{q^{\an}} = [\K(q):\KK_v]\, \delta_{q^{\an}}
=\delta_{q}.
\end{equation*}  
\end{proof}

Let $D$ be a Cartier divisor on $X$ and $\|\cdot\|_{v}$ a {metric} on
$\mathcal{O}(D)^{\mathrm{an}}_v$. The extension of this metric to a
metric $\|\cdot\|_{w}$ on the analytic line bundle
$\mathcal{O}(D)^{\mathrm{an}}_w$ on $X_{w}^{\an}$ is obtained by
taking the inverse image with respect to the map $\pi$ in
\eqref{eq:38}, that is 
\begin{equation}\label{eq:41}
  \|\cdot\|_{w}= \pi^{*}\,   \|\cdot\|_{v}.
\end{equation}
Proposition \ref{prop:7} implies directly the invariance of the local
height with respect to adelic field extensions. 

\begin{proposition}
\label{prop:6}
With notation as above, let $Y$ be a 0-cycle of $X_{v}$ and $s$ a
rational section of $\cO(D)_{v}^{\an}$ that is regular and non-vanishing on the
support of $Y$. Set $Y_{w}$ and $s_{{w}}=\pi^{*} s$ for the 0-cycle and rational
section obtained by base extension. Then
\begin{displaymath}
\h_{\|\cdot\|_{w}}(Y_{w}, s_{{w}})  =\h_{\|\cdot\|_{v}}(Y, s).  
\end{displaymath}
\end{proposition}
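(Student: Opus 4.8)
The plan is to reduce the statement on local heights to Proposition \ref{prop:7} on the pushforward of measures, using the change-of-variables formula for integration against pushforward measures. First I would unwind the definitions: by Definition \ref{def:4}, we have
\[
\h_{\|\cdot\|_{w}}(Y_{w}, s_{w})=-\int_{X_{w}^{\an}}\log\|s_{w}\|_{w}\, \delta_{Y_{w}}
\qquad\text{and}\qquad
\h_{\|\cdot\|_{v}}(Y, s)=-\int_{X_{v}^{\an}}\log\|s\|_{v}\, \delta_{Y}.
\]
The hypothesis that $s$ is regular and non-vanishing on $|Y|$ guarantees that $\log\|s\|_{v}$ is a well-defined continuous function on a neighborhood of $|Y|^{\an}$, and likewise $s_{w}=\pi^{*}s$ is regular and non-vanishing on $|Y_{w}|$ (since base change sends the support of $Y$ to the support of $Y_{w}$ and commutes with the local equations of $s$), so $\log\|s_{w}\|_{w}$ is continuous on a neighborhood of $|Y_{w}|^{\an}$; both integrals therefore make sense as integrals of a continuous function against a discrete signed measure of finite total mass.

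Next I would invoke the compatibility of the extended metric with the map $\pi$: by \eqref{eq:41}, $\|\cdot\|_{w}=\pi^{*}\|\cdot\|_{v}$, and since $s_{w}=\pi^{*}s$, this gives the pointwise identity
\[
\|s_{w}(x)\|_{w}=\|s(\pi(x))\|_{v}\qquad\text{for all }x\in X_{w}^{\an},
\]
that is, $\log\|s_{w}\|_{w}=\pi^{*}(\log\|s\|_{v})$ as continuous functions on $X_{w}^{\an}$ (at least on the relevant neighborhood of the support, which is all that is needed). Substituting this into the first integral yields
\[
\h_{\|\cdot\|_{w}}(Y_{w}, s_{w})=-\int_{X_{w}^{\an}}\pi^{*}(\log\|s\|_{v})\, \delta_{Y_{w}}.
\]
Then by the defining property of the pushforward measure, $\int_{X_{w}^{\an}}(g\circ\pi)\, \mu=\int_{X_{v}^{\an}}g\, (\pi_{*}\mu)$ applied with $g=\log\|s\|_{v}$ and $\mu=\delta_{Y_{w}}$, this equals $-\int_{X_{v}^{\an}}\log\|s\|_{v}\, (\pi_{*}\delta_{Y_{w}})$. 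Finally, by Proposition \ref{prop:7} we have $\pi_{*}\delta_{Y_{w}}=\delta_{Y}$, and hence the last expression is exactly $-\int_{X_{v}^{\an}}\log\|s\|_{v}\, \delta_{Y}=\h_{\|\cdot\|_{v}}(Y, s)$, which is the claim.

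I do not expect a serious obstacle here; the proof is essentially a one-line consequence of Proposition \ref{prop:7} once the definitions are unwound. The only point requiring a modicum of care is the justification that $\pi^{*}(\log\|s\|_{v})$ is the correct integrand — i.e.\ that base change of $s$ is compatible with the analytic pullback $\pi$ and with the pullback of the metric — but this is precisely the content of \eqref{eq:41} together with the compatibility of analytification with base change already used in the proof of Proposition \ref{prop:7}. One should also note that the support condition on $s$ with respect to $Y$ is preserved under base change, so that both local heights are defined; this was implicitly needed above. Thus the whole argument is: rewrite both sides as integrals, use $\|s_w\|_w=\pi^*\|s\|_v$, apply the change-of-variables formula for pushforward measures, and conclude with Proposition \ref{prop:7}.
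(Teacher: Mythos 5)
Your proof is correct and is exactly the argument the paper intends: the paper gives no separate proof for Proposition~\ref{prop:6}, simply stating that it follows directly from Proposition~\ref{prop:7}, and your unwinding (rewrite both heights as integrals, use $\|s_w\|_w=\pi^*\|s\|_v$ from \eqref{eq:41}, apply the change-of-variables formula for pushforward, and invoke Proposition~\ref{prop:7}) is precisely the intended deduction.
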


To define global heights of cycles over an adelic field, we consider
adelic families of metrics on the Cartier divisor $D$ satisfying a certain
compatibility condition.

\begin{definition}
  \label{def:5}
  An \emph{(adelic) metric} on $D$ is a collection $\|\cdot\|_{v}$ of
  metrics on $\mathcal{O}(D)^{\mathrm{an}}_v$, ${v\in \fM}$, such
  that, for every point $p\in X(\ol{\KK})$ and a choice of a rational
  section $s$ of~$\cO(D)$ that is regular and non-vanishing at $p$ and
  of an adelic field extension $(\FF,\fN)$ such that $p\in X(\FF)$,
  \begin{equation}\label{eq:14}
    \|s(p_{w}^{\an})\|_{w}=1
  \end{equation}
  for all but a finite number of $w\in \fN$.  We denote by
  $\ol{D}=(D, (\|\cdot\|_{v})_{v\in\fM})$ the corresponding
  \emph{(adelically) metrized divisor} on~$X$.

In addition, $\ol D$ is \emph{semipositive} if each
  of its $v$-adic metrics is semipositive in the sense of
  \cite[Definition~1.4.1]{BPS14}.
\end{definition}

The condition \eqref{eq:14} does not depend on the choice of the
rational section $s$ and of the adelic field extension $(\FF,\fN)$.

\begin{remark}
  \label{rem:4}
  When $\KK$ is a global field, the classical notion of compatibility
  for a collection of metrics $\|\cdot\|_{v}$ on
  $\mathcal{O}(D)^{\mathrm{an}}_v$, ${v\in \fM}$, is that of being
  quasi-algebraic, in the sense that there is an integral model that
  induces all but a finite number of these metrics \cite[Definition
  1.5.13]{BPS14}.

  By Proposition 1.5.14 in \emph{loc. cit.}, a quasi-algebraic
  metrized divisor $\ol{D}$ is adelic in the sense of Definition
  \ref{def:5}. The converse is not true, as it is easy to construct
  toric adelic metrized divisors that are not quasi-algebraic (Remark
  \ref{rem:5}).
\end{remark}

For a 0-cycle $Y$ of $X$ and a place $v\in\fM$, we denote by $Y_{v}$
the $0$-cycle of $X_{v}$ defined by base change. When $Y=p$ is a
closed point of $X$, by Lemma~\ref{prop:artiniandecomposition} applied
to the finite extension $\K(p)$ of $\KK$, the 0-dimensional
subscheme $p_{v}=p \times \Spec(\KK_{v})$ of
$X_{v}$ decomposes as
\[
p_{v}=\Spec(\K(p)\otimes_{\KK}\KK_v)\simeq\coprod_{w\in\mathfrak{N}_v}\Spec(E_w)
\mbox{,}
\]
where the $E_{i}$'s are the local Artinian $\KK_{v}$-algebras in
\eqref{eq:20}. 
Let $q_{w}$, $w\in \fN_{v}$, be the irreducible components of this
subscheme. Then, the associated 0-cycle of $X_{v}$ writes down as
\begin{displaymath}
  [p_{v}]=\sum_{w\in \fN_{v}}l_{E_{w}}(E_{w}) \, q_{w}
\end{displaymath}
and, for each $w\in \fN_{v}$, we have $\K(q_{w})\simeq \K(p)_{w}$.
For an arbitrary $Y$, the 0-cycle $Y_{v}$ is obtained by linearity.

Let $\ol{D}=(D, (\|\cdot\|_{v})_{v\in\fM})$ be a metrized divisor
on~$X$, $Y$ a 0-cycle of $X$ and $s$ a rational section of $\cO(D)$
that is is regular and non-vanishing on the support of $Y$. For each
place $v\in \fM$, we set
  \begin{equation*}
    \label{eq:17}
    \h_{\ol{D},v }(Y;s)=\h_{\|\cdot\|_{v}}(Y_{v};s),
  \end{equation*}
  where $Y_{v}$ is the 0-cycle of $X_{v}$ obtained by base change. The
  condition that $\ol{D}$ is adelic implies that
  $ \h_{\ol{D},v }(Y;s)=0$ for all but a finite number of places.  

  If $s'$ is another rational section of $\cO(D)$ that is regular and
  non-vanishing on $|Y|$, then $s'=fs$ with $f\in \K(X)^{\times}$ and,
  for $v\in \fM$,
  \begin{equation}\label{eq:40}
    \h_{\ol{D},v}(Y;s')=\h_{\ol{D},v}(Y;s)-\log|\gamma|_{v}
  \end{equation}
  where $Y=\sum_{p}\mu_{p}\, p$ and
  $\gamma=\prod_{p}f(p)^{\mu_{p}}\in \KK^{\times}$.

  \begin{definition}
    \label{def:6}
    Let $\ol{D}$ be a metrized divisor on~$X$ and $Y$ a 0-cycle of
    $X$.  The \emph{global height} of $Y$ with respect to $\ol{D}$ is
    defined as
  \begin{equation}\label{eq:16}
    \h_{\ol{D}}(Y)=\sum_{v\in \fM}n_{v}\h_{\ol{D},v}(Y;s),
  \end{equation}
with $s$ a rational section of $\cO(D)$
  that is is regular and non-vanishing on  $|Y|$.
  \end{definition}

  The local heights in \eqref{eq:16} are zero for all but a finite
  number of places, and so this sum is finite. The equality
  \eqref{eq:40} together with the product formula imply that this sum
  does not depend on the rational section $s$.

Given a metrized divisor 
$\ol{D}$  on~$X$ and an adelic field extension
$(\FF,\fN)$, we denote by $\ol{D}_{\FF}$ the metrized divisor on
$X_{\FF}$ obtained by extending the $v$-adic  metrics of $\ol{D}$ as
in~\eqref{eq:41}. 

\begin{proposition}
\label{prop:8}
Let $\ol{D}$ be a metrized divisor on~$X$,  $Y$ a 0-cycle of $X$ and
$(\FF,\fN)$ an adelic field extension of $(\KK,\fM)$. Then
\begin{displaymath}
\h_{\ol{D}_{\FF}}(Y_{\FF})  =\h_{\ol{D}}(Y).  
\end{displaymath}
\end{proposition}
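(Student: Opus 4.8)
The plan is to reduce the global-height equality to the local statement of Proposition~\ref{prop:6} together with the weight-compatibility of adelic field extensions from Proposition~\ref{prop:finiteextension}. First I would fix a rational section $s$ of $\cO(D)$ that is regular and non-vanishing on $|Y|$; after a finite extension of the base field if necessary (which does not affect either side by the very statement being proved, applied to a smaller extension, so one must be slightly careful to argue directly instead), one may compute with $s$ throughout. By Definition~\ref{def:6} applied over $(\FF,\fN)$, and using that $\pi^{*}s$ is still regular and non-vanishing on $|Y_{\FF}|$, we have
\begin{displaymath}
  \h_{\ol{D}_{\FF}}(Y_{\FF})=\sum_{w\in\fN}n_{w}\,\h_{\ol{D}_{\FF},w}(Y_{\FF};\pi^{*}s)
  =\sum_{v\in\fM}\sum_{w\in\fN_{v}}n_{w}\,\h_{\|\cdot\|_{w}}\big((Y_{\FF})_{w};\pi^{*}s\big).
\end{displaymath}

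Next I would identify $(Y_{\FF})_{w}$, the base change of $Y_{\FF}$ to $X_{w}=X\times\Spec(\FF_{w})$, with the base change of $Y_{v}$ from $X_{v}$ to $X_{w}$ along the inclusion $\KK_{v}\hookrightarrow\FF_{w}$; this is just transitivity of base change, since $Y_{\FF}$ is itself the base change of $Y$ from $X$ to $X_{\FF}$. Granting this, Proposition~\ref{prop:6} (with the metric extension \eqref{eq:41} and the section $s_{w}=\pi^{*}s$) gives, for each $v\in\fM$ and each $w\in\fN_{v}$,
\begin{displaymath}
  \h_{\|\cdot\|_{w}}\big((Y_{\FF})_{w};\pi^{*}s\big)=\h_{\|\cdot\|_{v}}(Y_{v};s)=\h_{\ol{D},v}(Y;s).
\end{displaymath}
Substituting this into the double sum, the inner summand no longer depends on $w$, so
\begin{displaymath}
  \h_{\ol{D}_{\FF}}(Y_{\FF})=\sum_{v\in\fM}\Big(\sum_{w\in\fN_{v}}n_{w}\Big)\h_{\ol{D},v}(Y;s)
  =\sum_{v\in\fM}n_{v}\,\h_{\ol{D},v}(Y;s)=\h_{\ol{D}}(Y),
\end{displaymath}
where the middle equality is Proposition~\ref{prop:finiteextension}\eqref{item:3}. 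One should also remark that both sums are finite: the adelic condition on $\ol{D}$ forces $\h_{\ol{D},v}(Y;s)=0$ for all but finitely many $v$, and then \eqref{eq:14} together with $\sum_{w\in\fN_{v}}n_{w}=n_{v}$ controls the $w$-sum, so all the rearrangements above are legitimate.

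The main obstacle I expect is purely bookkeeping rather than conceptual: making sure that the section $s$ chosen over $\KK$ really does pull back to an admissible section over $\FF$ for \emph{every} $w$ simultaneously (i.e.\ that regularity and non-vanishing on $|Y|$ are preserved under the base changes $X\to X_{v}\to X_{w}$), and that the measure-theoretic identity $\pi_{*}\delta_{(Y_{\FF})_{w}}=\delta_{Y_{v}}$ from Proposition~\ref{prop:7} is being applied to the correct cycle with the correct multiplicities — this is exactly where Remark~\ref{rem:1} (relating $\dim_{\KK_{v}}E_{w}$ to $l_{E_{w}}(E_{w})[\FF_{w}:\KK_{v}]$) and the decomposition of $p_{v}$ recorded before Definition~\ref{def:6} enter, ensuring the local height on the left is computed against the measure $\delta_{(Y_{\FF})_{w}}$ that Proposition~\ref{prop:6} expects. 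Once these compatibilities are laid out cleanly, the proof is a one-line rearrangement of sums.
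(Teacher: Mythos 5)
Your proof is correct and follows the same route as the paper: fix a rational section $s$ regular and non-vanishing on $|Y|$, use Proposition~\ref{prop:6} to identify each $w$-adic local height with the corresponding $v$-adic one, factor out $\sum_{w\in\fN_v}n_w=n_v$ via Proposition~\ref{prop:finiteextension}\eqref{item:3}, and sum over $v$. The extra bookkeeping remarks (transitivity of base change, admissibility of $\pi^{*}s$, finiteness of the sums) are sound and simply make explicit what the paper leaves implicit.
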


\begin{proof}
Let $s$ be  a rational section of $\cO(D)$
that is is regular and non-vanishing on  $|Y|$ and $v\in \fM$. By  Propositions
\ref{prop:6} and  \ref{prop:finiteextension}\eqref{item:3},
\begin{equation*}
\sum_{w\in \fN_{v}}n_{w}\, 
  \h_{\ol{D}_{\FF}, w}(Y_{\FF}, s)  = 
\sum_{w\in \fN_{v}}n_{w}\, 
  \h_{\ol{D}, v}(Y, s)  = 
n_{v}\, 
  \h_{\ol{D}, v}(Y, s).
\end{equation*}
The statement follows by summing over all the places of $\KK$. 
\end{proof}

Since the global height is invariant under field extension, it induces
a notion of global height for algebraic points, that is, a well-defined function 
\begin{displaymath}
  \h_{\ol{D}}\colon X(\ol{\KK}) \longrightarrow \RR.
\end{displaymath}
When $\KK$ is a global field, this notion coincides with the one in
\cite[Definition~2.2]{BPRS15}.

Now we turn to cycles of arbitrary dimension. Let $V$ be a
$k$-dimensional irreducible subvariety of $X$ and
$\ol{D}_0,\ldots,\ol{D}_{k-1}$ a family of $k$ semipositive metrized
divisors on $X$.  For each place $v\in \fM$, we can associate to this
data a measure on $X_{v}^{\an}$ denoted by
\begin{equation*}
\c_1 (\ol{D_0})\wedge\cdots\wedge\c_1(\ol{D}_{k-1})\wedge\delta_{\Van_v}
\end{equation*}
and called the \emph{$v$-adic Monge-Ampère measure} of $V$ and
$\ol{D}_0,\ldots,\ol{D}_{k-1}$, see \cite[D\'efinition
2.4]{Chambert-Loir:meeb} or \cite[Definition 1.4.6]{BPS14}.  For a
$k$-cycle $Y$ of $X$, this notion extends by linearity to a signed
measure on $X_{v}^{\an}$, denoted by
$\c_1
(\ol{D_0})\wedge\cdots\wedge\c_1(\ol{D}_{k-1})\wedge\delta_{\Yan_v}$.
It is supported on $|Y_{v}|^{\an}$ and has total mass equal to the
degree $\deg_{D_0,\ldots,D_{k-1}}(Y)$.

We recall the notion of local height of cycles from \cite[Definition 1.4.11]{BPS14}.

\begin{definition}\label{def:kcycleheight}
  Let $Y$ be a $k$-cycle of $X$ and, for $i=0,\dots, k$, let
  $(\ol{D}_i, s_{i})$ be a semipositive metrized divisor on $X$ and a
  rational section of $\cO(D_{i})$ such that 
  $\div(s_{0}), \dots, \div(s_{k})$ intersect $Y$ properly (Definition
  \ref{def:8}). For $v\in \fM$, the \emph{local height} of $Y$ with
  respect to $(\ol{D}_0,s_0),\ldots,(\ol{D}_k,s_k)$ is inductively
  defined by the rule
\begin{multline*}
  \mathrm{h}_{\ol{D}_0,\ldots,\ol{D}_k,v}(Y;s_0,\ldots,s_k)=
\mathrm{h}_{\ol{D}_0,\ldots,\ol{D}_{k-1},v}(\div(s_{k})\cdot
Y;s_0,\ldots,s_{k-1}) 
\\ -\int_{\Xan_v}\log\norm{s_k}_{k,v}\on{c}_1(\ol{D}_0)\wedge\ldots\wedge \on{c}_1(\ol{D}_{k-1})\wedge\delta_{Y^{\mathrm{an}}_v}
\end{multline*}
and the convention that the local height of the  cycle $0\in
{Z_{-1}(X)}$ is zero. 
\end{definition}

\begin{remark}
\
  \label{rem:2}
  \begin{enumerate}
  \item \label{item:7} The local height is linear with respect to the
    group structure of $Z_{k}(X)$. In particular, the local heights of
    the cycle $0\in Z_{k}(X)$ are zero. 
  \item \label{item:8} For a $0$-cycle $Y$ of $X$ and $v\in\fM$, the
    $v$-adic Monge-Ampère measure coincides with the measure
    associated to the $0$-cycle $Y_{v}$ of $X_{v}$ at the beginning of
    this section. Hence, Definition \ref{def:kcycleheight} applied to
    a $0$-cycle coincides with Definition~\ref{def:4}.
  \end{enumerate}
\end{remark}

The following notion is the arithmetic analogue of global sections of
a line bundle, and Proposition \ref{prop:recursive} below is an
analogue for local heights of Proposition \ref{prop:5}.

\begin{definition}
  \label{def:14}
Let $\ol{D}=(D,(\norm{\cdot}_v)_{v\in\fM})$ be a metrized divisor on
$X$.  A global section~$s$ of $\cO(D)$ is \emph{$\ol{D}$-small} if,
for all $v\in \fM$,
\begin{displaymath}
  \sup_{q\in X_{v}^{\an}} \|s(q)\|_{v}\le 1.
\end{displaymath}
\end{definition}

\begin{proposition}\label{prop:recursive}
  Let $Y$ be an effective $k$-cycle of $X$ and, for $i=0,\dots, k$,
  let $(\ol{D}_i, s_{i})$ be a semipositive metrized divisor on $X$
  and a rational section of $\cO(D_{i})$ such that $\div(s_{0}),
  \dots, \div(s_{k})$ intersect $Y$ properly and such that $s_k$ is
  $\ol{D}_{k}$-small. Then, for all $v\in \fM$, 
\[
\h_{\ol{D}_0,\ldots,\ol{D}_{k-1},v}(\div(s_k)\cdot Y;s_0,\ldots,s_{k-1})\leq
\h_{\ol{D}_0,\ldots,\ol{D}_k,v}(Y;s_0,\ldots,s_k).
\]
\end{proposition}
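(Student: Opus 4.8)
The plan is to exploit the inductive definition of the local height (Definition~\ref{def:kcycleheight}) and reduce the inequality to the nonnegativity of a single integral term. Comparing the two sides of the claimed inequality, one sees that the difference is exactly
\[
\h_{\ol{D}_0,\ldots,\ol{D}_k,v}(Y;s_0,\ldots,s_k)-\h_{\ol{D}_0,\ldots,\ol{D}_{k-1},v}(\div(s_k)\cdot Y;s_0,\ldots,s_{k-1})=-\int_{\Xan_v}\log\norm{s_k}_{k,v}\,\c_1(\ol{D}_0)\wedge\cdots\wedge\c_1(\ol{D}_{k-1})\wedge\delta_{Y^{\an}_v},
\]
so the whole statement amounts to showing that this integral is nonnegative.

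First I would record that, since $s_k$ is $\ol{D}_k$-small (Definition~\ref{def:14}), we have $\norm{s_k(q)}_{k,v}\le 1$ for every $q\in X_v^{\an}$, hence $\log\norm{s_k}_{k,v}\le 0$ pointwise on $X_v^{\an}$ (with the convention that the integrand is $+\infty$ on the support of $\div(s_k)$, where the measure below is handled by the properness hypothesis). Second I would invoke the hypothesis that $Y$ is effective and that $\ol{D}_0,\ldots,\ol{D}_{k-1}$ are semipositive metrized divisors: by \cite[Proposition~1.4.7 and the discussion following Definition~1.4.6]{BPS14}, the $v$-adic Monge--Amp\`ere measure $\c_1(\ol{D}_0)\wedge\cdots\wedge\c_1(\ol{D}_{k-1})\wedge\delta_{Y^{\an}_v}$ is a \emph{positive} measure on $X_v^{\an}$ (this is where effectivity of $Y$ and semipositivity of the metrics are used; for a negative cycle or a non-semipositive metric the measure need not be positive). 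Integrating a nonpositive function against a positive measure gives a nonpositive number, so $-\int_{\Xan_v}\log\norm{s_k}_{k,v}\,\c_1(\ol{D}_0)\wedge\cdots\wedge\c_1(\ol{D}_{k-1})\wedge\delta_{Y^{\an}_v}\ge 0$, which is the desired inequality.

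There is one technical point to address carefully: the integrand $\log\norm{s_k}_{k,v}$ is only defined and finite away from $|\div(s_k)|$, and the integral in Definition~\ref{def:kcycleheight} is interpreted in the sense of \cite[\S1.4]{BPS14} as a limit over the complement of an increasingly small neighbourhood of $|\div(s_k)|$. The properness hypothesis on $\div(s_0),\ldots,\div(s_k)$ guarantees that $|\div(s_k)|$ has measure zero for the relevant Monge--Amp\`ere measure, so the integral is well-defined; and since truncating the integrand from below at $-N$ keeps it nonpositive, the monotone limit defining the integral is a limit of nonpositive quantities, hence nonpositive. I would therefore phrase the argument via these truncations to stay rigorous. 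The main obstacle is not conceptual but bookkeeping: making sure the sign conventions of \cite{BPS14} for the Monge--Amp\`ere measure and for $\log\norm{\cdot}$ are lined up correctly, and that the positivity statement for the measure is quoted in the exact generality (effective cycle, semipositive metrics) needed here; once that is pinned down, the proof is a one-line positivity argument.
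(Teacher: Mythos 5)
Your proof is correct and is essentially identical to the paper's: both arguments rewrite the difference of the two sides as the integral term in Definition~\ref{def:kcycleheight}, note that $\ol{D}_k$-smallness of $s_k$ gives $\log\|s_k\|_{k,v}\le 0$ pointwise, and use that effectivity of $Y$ together with semipositivity of $\ol{D}_0,\dots,\ol{D}_{k-1}$ makes the $v$-adic Monge--Amp\`ere measure nonnegative, so the integral term is $\ge 0$. The extra care you devote to well-definedness of the integral near $|\div(s_k)|$ is sound and slightly more explicit than the paper, which simply cites the inductive definition.
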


\begin{proof}
  Since the cycle $Y$ is effective and the metrized divisors
  $\ol{D}_{i}$ are semipositive, their $v$-adic
  Monge-Ampere measure is a measure, that is, it takes only nonnegative
  values.  Since the global section $s_k$ is $\ol{D}_{k}$-small,
  $\log\norm{s_k(q)}_{k,v}\leq 0$ for all $q\in X_{v}^{\an}$. The
  inequality follows then from the inductive definition of the local
  height.
\end{proof}

Our next step is to define global heights for cycles over an adelic
field. We first state an auxiliary result specifying the behavior of
local heights with respect to change of sections, extending
\eqref{eq:40} to the higher dimensional case. The following lemma and
its proof are similar to \cite[Corollary 3.8]{Gubler:hsmf}.

\begin{lemma}
  \label{lemm:1}
  Let $Y$ be a $k$-cycle of $X$ and $\ol{D}_0, \dots, \ol{D}_{k}$
  semipositive metrized divisors on $X$.  Let $s_{i},s_{i}'$ be
  rational sections of $\cO(D_{i})$, $i=0,\dots, k$, such that both
  $\div(s_{0}), \dots, \div(s_{k})$ and
  $\div(s_{0}'), \dots, \div(s_{k}')$ intersect $Y$ properly. Then
  there exists $\gamma\in \KK^{\times}$ such that, for all $v\in \fM$,
  \begin{equation}\label{eq:24}
  \h_{\ol{D}_0,\ldots,\ol{D}_k,v}(Y;s_0',\ldots,s_k')=
  \h_{\ol{D}_0,\ldots,\ol{D}_k,v}(Y;s_0,\ldots,s_k)-
  \log|\gamma|_{v}.  
  \end{equation}
  \end{lemma}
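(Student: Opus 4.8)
The plan is to reduce the statement to the one-divisor, scalar case by a telescoping argument over the $k+1$ slots, using the linearity of the local height in the cycle variable (Remark \ref{rem:2}\eqref{item:7}) and the fact that two rational sections of the same $\cO(D_i)$ differ by multiplication by a global rational function. First I would handle the case where $s_i = s_i'$ for all $i \ge 1$ and only $s_0$ is replaced: here $s_0' = f_0 s_0$ with $f_0 \in \K(X)^\times$, and by the inductive definition of the local height (Definition \ref{def:kcycleheight}) applied in the top slot, the difference $\h_{\ldots,v}(Y;s_0',\ldots) - \h_{\ldots,v}(Y;s_0,\ldots)$ picks up a term $\h_{\ol D_1,\ldots,\ol D_k,v}(\div(f_0)\cdot Y \; ; s_1,\ldots,s_k)$ plus an integral term $-\int_{\Xan_v}\log|f_0|_v \, \c_1(\ol D_1)\wedge\cdots\wedge\c_1(\ol D_k)\wedge\delta_{Y^{\an}_v}$. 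Wait — this is not quite the right bookkeeping, since in Definition \ref{def:kcycleheight} the recursion strips off the \emph{last} section $s_k$, not $s_0$. So I would instead argue by symmetry of the local height in its arguments (which holds because the intersection numbers are symmetric when the divisors intersect $Y$ properly), or more safely, I would change one section at a time starting from the last slot $s_k$ and work down.

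So the core lemma to prove is: if $s_k' = f s_k$ with $f\in\K(X)^\times$ and $\div(f)\cdot Y$ together with the remaining sections still intersect properly (which one can arrange, or circumvent via Remark \ref{rem:2}\eqref{item:7} and bilinearity), then
\[
\h_{\ol{D}_0,\ldots,\ol{D}_k,v}(Y;s_0,\ldots,s_k') - \h_{\ol{D}_0,\ldots,\ol{D}_k,v}(Y;s_0,\ldots,s_k)
\]
equals a quantity independent of $v$ up to a $-\log|\gamma_k|_v$ term with $\gamma_k\in\KK^\times$. From the definition, $\log\|s_k'\|_{k,v} = \log|f|_v + \log\|s_k\|_{k,v}$, so the integral term contributes $-\int_{\Xan_v}\log|f|_v\,\c_1(\ol D_0)\wedge\cdots\wedge\c_1(\ol D_{k-1})\wedge\delta_{Y^{\an}_v}$, and the recursive term changes by $\h_{\ol D_0,\ldots,\ol D_{k-1},v}(\div(f)\cdot Y; s_0,\ldots,s_{k-1})$. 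Now $\div(f)\cdot Y$ is a $(k-1)$-cycle, and one invokes the \emph{induction hypothesis in the dimension $k$} — this is the crux — together with the key identity from the theory of local heights (the analogue of \cite[Proposition 1.4.7 / Proposition 1.5.8]{BPS14}) stating that for a rational function $f$ and a cycle $Z$, the sum of the recursive term and the integral term is, up to the contribution of $f$ evaluated on the points of the pushdown, something that telescopes. Concretely, I expect to use the statement that $\h_{\ol D_0,\ldots,\ol D_{k-1},v}(\div(f)\cdot Y; \ldots) - \int \log|f|_v\,(\cdots) = -\log|f_*(\text{something})|_v$, which in the $0$-dimensional case is exactly \eqref{eq:40}. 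Induct on $k$: the base case $k=0$ is \eqref{eq:40} with $\gamma = \prod_p f(p)^{\mu_p}$.

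The main obstacle I anticipate is the proper-intersection bookkeeping: when I replace $s_k$ by $f s_k$, the divisor $\div(f s_k) = \div(f) + \div(s_k)$ need not intersect $Y$ properly even if $\div(s_k)$ does, so the intermediate local heights $\h(\div(f)\cdot Y;\ldots)$ may not be literally defined by Definition \ref{def:kcycleheight}. The fix is to either (a) appeal to the well-definedness and extra functoriality established in \cite{BPS14} that lets one compute local heights via limits / via sections intersecting properly after a small perturbation, or (b) go through the difference of the two \emph{full} height data $s_\bullet$ and $s_\bullet'$ at once rather than one slot at a time, writing $\gamma = \prod_{p} \big(\prod_i f_i(p)^{\ord}\big)$ assembled from all $k+1$ ratios $f_i = s_i'/s_i$ via an explicit formula analogous to the $0$-dimensional $\gamma$, and verifying the identity \eqref{eq:24} by the multilinearity of the local height and the already-cited change-of-section formulas from \cite[\S1.4]{BPS14}. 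Either way, the rationality $\gamma\in\KK^\times$ (as opposed to $\ol\KK^\times$) follows because $f_i\in\K(X)^\times$ is defined over $\KK$ and its values on a Galois-stable $0$-cycle, taken with the right multiplicities, land in $\KK^\times$ — this is the same mechanism as in \eqref{eq:40}, and I would state it as the final step.
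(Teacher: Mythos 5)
Your plan correctly identifies the two essential ingredients the paper uses: symmetry of the local height in the pairs $(\ol D_i, s_i)$ (this is \cite[Theorem~1.4.17(1)]{BPS14}), and a one-slot change-of-section identity producing a $-\log|\gamma_k|_v$ term with $\gamma_k = \prod_p f(p)^{\mu_p}\in\KK^\times$ (this is precisely \cite[Theorem~1.4.17(3)]{BPS14}, cited as a black box rather than reproved by induction on $k$ as you propose). You also correctly flag the crux: after replacing $s_k$ by $f s_k$, or more generally while walking from $s_\bullet$ to $s_\bullet'$ one slot at a time, the intermediate mixed families need not intersect $Y$ properly, so the intermediate local heights are not defined.

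The gap is that you don't actually close this. Neither of your proposed fixes is the one that works cleanly. Fix (a), ``compute via limits/perturbation,'' is not available: there is no limiting formalism in the definitions being used, and ``perturbing'' a section off the problem would change $\gamma$. Fix (b), ``go through the difference of the two full height data at once,'' doesn't resolve anything, since the recursion in Definition~\ref{def:kcycleheight} is inherently one slot at a time. What the paper does instead — and what your plan is missing — is introduce a \emph{third} family of rational sections $s_0'',\ldots,s_k''$, chosen generic so that for every subset $J\subset\{0,\ldots,k\}$ the mixed family $\{\div(s_j):j\in J\}\cup\{\div(s_j''):j\notin J\}$ intersects $Y$ properly, and likewise with $s_j'$ in place of $s_j$. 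Then one compares $s_\bullet$ to $s_\bullet''$ one slot at a time (at each step moving the slot being changed to the last position by symmetry; the genericity hypothesis guarantees all intermediate families intersect properly, so each step is a legitimate application of \cite[Theorem~1.4.17(3)]{BPS14}), does the same for $s_\bullet'$ versus $s_\bullet''$, and subtracts. This triangulation through a generic third family is the idea that makes the bookkeeping go through; you identified the obstacle but not the device that circumvents it. A minor further point: the rationality $\gamma\in\KK^\times$ here comes simply from $f\in\K(X)^\times$ and $\mu_p\in\ZZ$ in the product $\prod_p f(p)^{\mu_p}$ over a $\KK$-cycle, as packaged in \cite[Theorem~1.4.17(3)]{BPS14} — no separate Galois-descent step is needed.
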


  \begin{proof}
    Let $s_{i}''$ be rational sections of $\cO(D_{i})$,
    $i=0,\dots, k$, such that $(s_{0}'',\dots, s_{k}'')$ is generic
    enough so that, for every subset $J\subset \{0,\dots, k\}$, the
    family of divisors
    \begin{equation}
      \label{eq:2}
  \{\div(s_{j})\mid j\in J\}\cup\{\div(s_{j}'')\mid j\notin J\}
    \end{equation}
    intersects $Y$ properly.

    We proceed to prove the formula \eqref{eq:24} with the $s_{i}''$'s
    in the place of the $s_{i}'$'s.  Hence, we want to prove that
    there is $\wt \gamma\in \KK^{\times}$ such that, for every
    $v\in \fM$,
\begin{equation}
  \label{eq:4}
    \h_{\ol{D}_0,\ldots,\ol{D}_k,v}(Y;s_0'',\ldots,s_k'')=
  \h_{\ol{D}_0,\ldots,\ol{D}_k,v}(Y;s_0,\ldots,s_k)-
  \log|\wt\gamma|_{v}.  
\end{equation}
To this end, consider first the
particular case when $s_{i}=s_{i}''$, $i=0,\dots, k-1$.  Set
$s_{k}''=fs_{k}$ with $f\in \K(X)^{\times}$, and
$\big( \prod_{i=0}^{k-1}\div(s_{i})\big) \cdot Y=\sum_{p} \mu_{p}\,
p$.
By \cite[Theorem~1.4.17(3)]{BPS14}, the equality \eqref{eq:24} holds
with $\wt \gamma_{k} \in \KK^{\times}$ given by
\begin{displaymath}
  \wt \gamma_{k} = \prod_{p}f(p)^{\mu_{p}}. 
\end{displaymath}
By \cite[Theorem~1.4.17(1)]{BPS14}, the local height is symmetric in
the pairs $(\ol{D_{i}}, s_{i})$.  By the hypothesis \eqref{eq:2}, we
can reorder the metrized line bundles and rational sections, and
iterate the above construction for every $i=0,\dots, k$. This proves
\eqref{eq:4} with $\wt \gamma=\prod_{i=0}^{k}\wt \gamma_{i}$.

Assuming that the $s_{i}''$'s are generic enough so that the condition
in \eqref{eq:2} also holds with the $s_{i}'$'s instead of the
$s_{i}$'s, similarly there exists $\wt \gamma'\in \KK^{\times }$ such
that, for every $v\in \fM$,
\begin{equation}
  \label{eq:6}
      \h_{\ol{D}_0,\ldots,\ol{D}_k,v}(Y;s_0'',\ldots,s_k'')=
  \h_{\ol{D}_0,\ldots,\ol{D}_k,v}(Y;s_0',\ldots,s_k')-
  \log|\wt\gamma'|_{v}.  
\end{equation}
The statement follows by combining \eqref{eq:4} and \eqref{eq:6}. 
  \end{proof}

We consider the following notions of positivity of metrized divisors.

\begin{definition}
\label{def:9}
Let $\ol{D}$ be a metrized divisor on
$X$. 
\begin{enumerate}
\item \label{item:9} $\ol{D}$ is \emph{nef} if $D$ is nef, $\ol{D}$ is
  semipositive, and $\h_{\ol{D}}(p)\ge 0$ for every closed point $p$
  of $X$.
\item \label{item:10}  $\ol D$ is \emph{generated by small sections} if, for
every closed point $p\in X$, there is a $\ol{D}$-small section $s$
such that $p\notin |\div(s)|$.
\end{enumerate}
\end{definition}

  \begin{lemma} \label{lemm:2} Let $Y$ be an effective $k$-cycle of
    $X$ and $(\ol{D}_i, s_{i})$ semipositive metrized divisors on
    $X$ together with rational sections of $\cO(D_{i})$, $i=0,\dots, k$, such
    that $\div(s_{0}), \dots, \div(s_{k})$ intersect $Y$
    properly. Suppose that $\ol{D}_{i}$, $i=1,\dots, k$, are generated
    by small sections. Then there exists $\zeta\in \KK^{\times}$ such
    that, for all $v\in \fM$,
  \begin{displaymath}
  \h_{\ol{D}_0,\ldots,\ol{D}_k,v}(Y;s_0,\ldots,s_k)\ge
  \log|\zeta|_{v}+
  \h_{\ol{D}_{0},v}\bigg(\bigg(\prod_{i=1}^{k}\div(s_{i})\bigg)\cdot Y,
  s_{0}\bigg). 
  \end{displaymath}
  \end{lemma}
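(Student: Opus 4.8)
The strategy is to reduce the statement, via the inductive definition of the local height (Definition~\ref{def:kcycleheight}), to iterated applications of the single-divisor estimate, and then to control the change-of-section ambiguity using Lemma~\ref{lemm:1}. Since the $\ol D_i$ for $i=1,\dots,k$ are generated by small sections, for each such $i$ and each closed point in the (finite) support of the relevant intersection cycles we may choose a $\ol D_i$-small section $t_i$ that does not vanish there; more precisely, since only finitely many points are involved across the whole induction, a single $\ol D_i$-small section $t_i$ can be chosen with $|\div(t_i)|$ avoiding all of them, so that $\div(t_0),\dots,\div(t_k)$ (with $t_0=s_0$) still intersect $Y$ properly, where for $i\ge 1$ we replace $s_i$ by $t_i$ and keep $s_0$. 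The point of passing to the $t_i$ is that Proposition~\ref{prop:recursive} applies to $\ol D_k$-small \emph{global} sections, giving, for every $v\in\fM$,
\begin{displaymath}
  \h_{\ol D_0,\dots,\ol D_{k-1},v}(\div(t_k)\cdot Y; t_0,\dots,t_{k-1})
  \le
  \h_{\ol D_0,\dots,\ol D_k,v}(Y; t_0,\dots,t_k).
\end{displaymath}

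First I would handle the change of sections. By Lemma~\ref{lemm:1} applied to $Y$ and the sections $(s_0,\dots,s_k)$ versus $(t_0,\dots,t_k)$, there is $\gamma_0\in\KK^\times$ with
\begin{displaymath}
  \h_{\ol D_0,\dots,\ol D_k,v}(Y;s_0,\dots,s_k)
  = \h_{\ol D_0,\dots,\ol D_k,v}(Y;t_0,\dots,t_k) - \log|\gamma_0|_v
\end{displaymath}
for all $v$. Then, using Proposition~\ref{prop:recursive} with $\ol D_k$ and the $\ol D_k$-small global section $t_k$, I peel off the top divisor to get the inequality displayed above. Now iterate: $\div(t_k)\cdot Y$ is again an effective $(k-1)$-cycle (effective because $t_k$ is a global section and $Y$ is effective), the remaining $\ol D_i$ for $i=1,\dots,k-1$ are still generated by small sections, and the $t_i$ were chosen generic enough that the relevant proper-intersection conditions persist at each stage. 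After $k$ steps one arrives at
\begin{displaymath}
  \h_{\ol D_0,\dots,\ol D_k,v}(Y;t_0,\dots,t_k)
  \ge
  \h_{\ol D_0,v}\Bigl(\bigl(\textstyle\prod_{i=1}^k\div(t_i)\bigr)\cdot Y;\, t_0\Bigr),
\end{displaymath}
the right-hand side being the $0$-cycle local height of Definition~\ref{def:4} (cf.\ Remark~\ref{rem:2}\eqref{item:8}).

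The last step is to replace $\prod_{i=1}^k\div(t_i)\cdot Y$ by $\prod_{i=1}^k\div(s_i)\cdot Y$ inside that $0$-cycle height. Here I would use the behaviour of the $0$-dimensional local height under change of section for the inner divisors: writing $t_i = f_i s_i$ with $f_i\in\K(X)^\times$, the two intersection cycles differ by rational equivalence, and equation~\eqref{eq:40} (applied iteratively, or its underlying computation from \cite[Theorem~1.4.17(3)]{BPS14}) shows that for a suitable $\gamma_1\in\KK^\times$,
\begin{displaymath}
  \h_{\ol D_0,v}\Bigl(\bigl(\textstyle\prod_{i=1}^k\div(t_i)\bigr)\cdot Y; s_0\Bigr)
  = \h_{\ol D_0,v}\Bigl(\bigl(\textstyle\prod_{i=1}^k\div(s_i)\bigr)\cdot Y; s_0\Bigr) + \log|\gamma_1|_v.
\end{displaymath}
Absorbing all the unit factors collected along the way into a single $\zeta\in\KK^\times$ yields the claimed inequality with the $\log|\zeta|_v$ term. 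The main obstacle I anticipate is the bookkeeping in the inductive step: one must choose the auxiliary sections $t_i$ simultaneously generic enough that \emph{all} of the proper-intersection hypotheses needed by Lemma~\ref{lemm:1}, Proposition~\ref{prop:recursive}, and the $0$-cycle change-of-section formula hold at every level of the recursion, and one must verify that the single global unit $\zeta$ (independent of $v$) assembled from the various $\gamma$'s is legitimately in $\KK^\times$ — this is exactly what Lemma~\ref{lemm:1} and \eqref{eq:40} are designed to guarantee, so the difficulty is organizational rather than conceptual. A secondary point worth care is the reduction to the case where such small sections avoiding finitely many points exist, which is immediate from Definition~\ref{def:9}\eqref{item:10} applied point by point together with a prime-avoidance/genericity argument, valid since $\KK$ is infinite and $X$ projective.
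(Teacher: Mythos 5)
Your proposal follows essentially the same strategy as the paper's proof: use Lemma~\ref{lemm:1} to control the dependence on the choice of rational sections, replace the inner sections $s_i$ ($i\ge 1$) by $\ol D_i$-small sections, peel off one divisor at a time with Proposition~\ref{prop:recursive}, and absorb all the resulting $\log|\gamma|_v$ discrepancies into a single $\zeta\in\KK^\times$. The paper organizes this as an induction on $k$, first reducing to $Y=V$ irreducible and then choosing the small sections at the inductive step, whereas you fix a simultaneously generic family $t_i$ up front and iterate Proposition~\ref{prop:recursive} $k$ times; these are the same argument with different bookkeeping, and your choice to keep $t_0=s_0$ is actually cleaner, since $\ol D_0$ is not assumed to be generated by small sections (the paper's phrasing ``$s_i$, $i=0,\dots,k$, $\ol D_i$-small'' is a slip and should read $i=1,\dots,k$).

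One step deserves a more careful justification than the one you give. In the final substitution, you wish to replace $\bigl(\prod_{i=1}^k\div(t_i)\bigr)\cdot Y$ by $\bigl(\prod_{i=1}^k\div(s_i)\bigr)\cdot Y$ inside $\h_{\ol D_0,v}(\,\cdot\,;s_0)$ and you invoke \eqref{eq:40}. But \eqref{eq:40} governs only the change of the section $s_0$ on a \emph{fixed} $0$-cycle; it does not by itself say that replacing the $0$-cycle $\prod\div(t_i)\cdot Y$ by the linearly equivalent cycle $\prod\div(s_i)\cdot Y$ shifts $\h_{\ol D_0,v}(\,\cdot\,;s_0)$ by a global $\log|\gamma|_v$. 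That fact needs a separate computation from the recursive formula and \cite[Theorem~1.4.17(3)]{BPS14} (and is exactly what the paper is folding into the terse sentence ``By Lemma~\ref{lemm:1}, it is enough to prove the statement for any particular choice of rational sections''). Cleaner is to do as the paper does: invoke Lemma~\ref{lemm:1} once at the level of the \emph{full} local height $\h_{\ol D_0,\dots,\ol D_k,v}(Y;\cdot)$ to pass from $(s_i)$ to $(t_i)$, prove the inequality for the small family $(t_i)$ with $\zeta=1$, and observe that the resulting right-hand side $\h_{\ol D_0,v}(\prod\div(t_i)\cdot Y;s_0)$ is itself of the required form for the lemma's conclusion applied to the generic sections. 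In any case this is a matter of where the change-of-section reasoning is carried out rather than a different idea, so your proof and the paper's are in substance the same.
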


  \begin{proof}
    For $k=0$, the statement is obvious, so we only consider the case
    when $k\ge1$. By Lemma \ref{lemm:1}, it is enough to prove the
    statement for any particular choice of rational sections $s_{i}$,
    provided that their associated Cartier divisors intersect $Y$
    properly.

    We can also reduce without loss of generality to the case when
    $Y=V$ is an irreducible variety of dimension $k$. We can then
    choose rational sections $s_{i}$, $i=0,\dots, k$, such that
    $s_{i}$ is $\ol{D}_{i}$-small. By Proposition
    \ref{prop:recursive},
\begin{displaymath}
  \h_{\ol{D}_0,\ldots,\ol{D}_k,v}(V;s_0,\ldots,s_k)\ge 
  \mathrm{h}_{\ol{D}_0,\ldots,\ol{D}_{k-1},v}(\div(s_{k})\cdot
  V;s_0,\ldots,s_{k-1}) .
\end{displaymath}
Since $\div(s_{k})\cdot V$ is an effective $(k-1)$-cycle, the
statement follows by  induction on~$k$. 
  \end{proof}

\begin{prop-def}
\label{prop:4}
Let $Y$ be an effective $k$-cycle of $X$, and
$\ol{D}_0,\dots, \ol{D}_{k}$ semipositive metrized divisors on $X$
such that $\ol{D}_1,\dots, \ol{D}_{k}$ are generated by small
sections.  Let $s_{i}$ be rational sections of $\cO(D_{i})$,
$i=0,\dots, k$, such that $\div(s_{0}), \dots, \div(s_{k})$ intersects
$Y$ properly.  The \emph{global height} of $Y$ with respect to
$\ol{D}_0,\dots, \ol{D_{k}}$ is defined as the sum
\begin{equation}\label{eq:25}
\h_{\ol{D}_0,\ldots,\ol{D}_k}(Y)=    
      \sum_{v\in \fM} n_{v}\, 
\h_{\ol{D}_0,\ldots,\ol{D}_k,v}(Y;s_0,\ldots,s_k).
    \end{equation}
    This sum converges to an element in $\RR \cup\{+\infty\}$, and its
    value does not depend on the choice of the $s_{i}$'s.
\end{prop-def}

\begin{proof}
  The existence of rational sections $s_{i}$ such that
  $\div(s_{0}), \dots, \div(s_{k})$ intersects $Y$ properly follows
  from the moving lemma, with the hypothesis that $X$ is projective.
 
  By Lemma \ref{lemm:2} and the fact that the local heights of
  0-cycles are zero for all but a finite number of places, the
  local heights in \eqref{eq:25} are nonnegative, except for a finite
  number of $v$'s. Hence, the sum converges to an element in
  $\RR\cup\{+\infty\}$. Lemma~\ref{lemm:1} and the product formula
  imply that the value of this sum does not depend on the choice of
  the $s_{i}$'s.
\end{proof}

This definition generalizes the notion of global height of cycles of
varieties over global fields in \cite[\S 1.5]{BPS14}, to cycles of
varieties over an arbitrary adelic field, in the case when the
considered metrized divisors are generated by small sections.

In principle, the sum in \eqref{eq:25} might contain an infinite
number of nonzero terms. Nevertheless, we are not aware of any example
where this phenomenon actually happens. Moreover, for varieties over
global fields, the local heights of a given cycle are zero for all but
a finite number of places \cite[Proposition 1.5.14]{BPS14}, and so
their global height is a real number given as a weighted sum of a
finite number local heights.

In this context, we propose the following question.

\begin{question} \label{ques:1}
    Let $Y$ be a $k$-cycle of $X$ and, for each $i=0,\dots, k$, let
  $(\ol{D}_i, s_{i})$ be a semipositive metrized divisor on $X$ and a
  rational section of $\cO(D_{i})$ such that 
  $\div(s_{0}), \dots, \div(s_{k})$ intersect $Y$ properly. 
Is it true that
\begin{displaymath}
  \mathrm{h}_{\ol{D}_0,\ldots,\ol{D}_k,v}(Y;s_0,\ldots,s_k)=0
\end{displaymath}
for all but a finite number of $v\in \fM_{\KK}$?
\end{question}

A positive answer would imply that, for a variety over an adelic field
and a family of semipositive metrized divisors, the global height of a
cycle is a well-defined real number, given as a weighted sum of a
finite number local heights.

The following results are arithmetic analogues of Proposition
\ref{prop:5} and Corollary~\ref{cor:1}.

\begin{proposition}
  \label{prop:2}
  Let $Y$ be an effective $k$-cycle of $X$, and
  $\ol{D}_0,\dots, \ol{D}_{k}$  semipositive metrized
  divisors on $X$ such that $\ol{D}_{0}$ is nef and
  $\ol{D}_1,\dots, \ol{D}_{k}$ are generated by small sections. Let
  $s_{k}$ be a $\ol{D}_{k}$-small section. Then
\[
0\le \h_{\ol{D}_0,\ldots,\ol{D}_{k-1}}(\div(s_k)\cdot Y)\leq
\h_{\ol{D}_0,\ldots,\ol{D}_k}(Y).
\]
\end{proposition}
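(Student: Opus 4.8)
The plan is to follow the proof of Proposition~\ref{prop:5}, deducing each of the two inequalities from its per-place counterpart and passing to global heights by summing over $\fM$ with the help of the product formula. First I would record the preliminary facts. Since $s_{k}$ is $\ol{D}_{k}$-small it is in particular a global section of $\cO(D_{k})$, so $\div(s_{k})\cdot Y$ is an effective $(k-1)$-cycle; and since $\ol{D}_{1},\dots,\ol{D}_{k}$ are semipositive and generated by small sections, they are in particular nef (being globally generated the underlying divisors are nef, and evaluating a non-vanishing small section at a closed point $p$ gives $\h_{\ol{D}_{i}}(p)\ge 0$). Hence both $\h_{\ol{D}_{0},\dots,\ol{D}_{k-1}}(\div(s_{k})\cdot Y)$ and $\h_{\ol{D}_{0},\dots,\ol{D}_{k}}(Y)$ are well-defined elements of $\RR\cup\{+\infty\}$ by Proposition-Definition~\ref{prop:4}. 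Since $X$ is projective, by the moving lemma I may fix rational sections $s_{i}$ of $\cO(D_{i})$, $i=0,\dots,k-1$, generic enough that $\div(s_{0}),\dots,\div(s_{k})$ intersect $Y$ properly, and then also $\div(s_{0}),\dots,\div(s_{k-1})$ intersect $\div(s_{k})\cdot Y$ properly; by the independence of the global height from the auxiliary sections, both heights above are computed by weighting by the $n_{v}$ and summing the corresponding local heights against these sections.

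For the upper bound I would apply Proposition~\ref{prop:recursive} to $Y$ and the pairs $(\ol{D}_{i},s_{i})$, which is legitimate because $Y$ is effective, the $\ol{D}_{i}$ are semipositive, the $\div(s_{i})$ intersect $Y$ properly, and $s_{k}$ is $\ol{D}_{k}$-small: for every $v\in\fM$,
\[
\h_{\ol{D}_{0},\dots,\ol{D}_{k-1},v}(\div(s_{k})\cdot Y;s_{0},\dots,s_{k-1})\le\h_{\ol{D}_{0},\dots,\ol{D}_{k},v}(Y;s_{0},\dots,s_{k}).
\]
Multiplying by $n_{v}>0$ and summing over $\fM$ gives $\h_{\ol{D}_{0},\dots,\ol{D}_{k-1}}(\div(s_{k})\cdot Y)\le\h_{\ol{D}_{0},\dots,\ol{D}_{k}}(Y)$.

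For the lower bound I would first prove, by induction on $m\ge 0$, the auxiliary assertion: if $Z$ is an effective $m$-cycle of $X$ and $\ol{D}_{0},\dots,\ol{D}_{m}$ are semipositive metrized divisors with $\ol{D}_{0}$ nef and $\ol{D}_{1},\dots,\ol{D}_{m}$ generated by small sections, then $\h_{\ol{D}_{0},\dots,\ol{D}_{m}}(Z)\ge 0$. The case $m=0$ is the nefness clause $\h_{\ol{D}_{0}}(p)\ge 0$ of Definition~\ref{def:9}, extended by additivity to effective $0$-cycles. For the inductive step, additivity of the global height in $Z$ reduces to the case where $Z=V$ is an irreducible variety of dimension $m$; since $\ol{D}_{m}$ is generated by small sections there is a $\ol{D}_{m}$-small section $s_{m}$ with $V\not\subset|\div(s_{m})|$, so $\div(s_{m})\cdot V$ is an effective $(m-1)$-cycle, and the argument of the previous paragraph (Proposition~\ref{prop:recursive} summed over $\fM$, with auxiliary rational sections for $\ol{D}_{0},\dots,\ol{D}_{m-1}$) gives $\h_{\ol{D}_{0},\dots,\ol{D}_{m-1}}(\div(s_{m})\cdot V)\le\h_{\ol{D}_{0},\dots,\ol{D}_{m}}(V)$; the left-hand side is $\ge 0$ by the inductive hypothesis. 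Taking $m=k-1$ and $Z=\div(s_{k})\cdot Y$ in this assertion gives $0\le\h_{\ol{D}_{0},\dots,\ol{D}_{k-1}}(\div(s_{k})\cdot Y)$, which together with the upper bound completes the proof.

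The argument is essentially formal once Proposition~\ref{prop:recursive} and Proposition-Definition~\ref{prop:4} are in place. The point demanding attention is that the hypothesis ``generated by small sections'' on $\ol{D}_{1},\dots,\ol{D}_{k-1}$ cannot be relaxed to nefness: it is exactly what allows peeling a small section off each of these divisors, while $\ol{D}_{0}$, being merely nef, is invoked only at the final $0$-dimensional stage through Definition~\ref{def:9}. The remaining care is bookkeeping --- choosing a single family $s_{0},\dots,s_{k-1}$ that simultaneously computes both global heights and meets all proper-intersection hypotheses (handled by the moving lemma and the independence statement of Proposition-Definition~\ref{prop:4}), and the legitimacy of summing the place-by-place inequalities given that the global heights a priori lie in $\RR\cup\{+\infty\}$.
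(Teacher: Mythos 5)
Your argument for the \emph{lower bound}, via the auxiliary nonnegativity assertion proved by peeling off small sections with Proposition~\ref{prop:recursive}, is correct and is essentially a re-derivation of what the paper gets from Lemma~\ref{lemm:2} together with the product formula; the paper cites that lemma while you redo the induction directly at the level of global heights, which is a cosmetic difference.

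The \emph{upper bound}, however, has a genuine gap. You claim that, by the moving lemma, one can fix rational sections $s_{0},\dots,s_{k-1}$ of $\cO(D_{0}),\dots,\cO(D_{k-1})$ generic enough that $\div(s_{0}),\dots,\div(s_{k})$ intersect $Y$ properly, where $s_{k}$ is the given $\ol{D}_{k}$-small section. This is impossible in general: if some irreducible $k$-dimensional component $V$ of $Y$ is contained in $|\div(s_{k})|$, then taking $I=\{k\}$ in Definition~\ref{def:8} forces $\dim\big(|Y|\cap|\div(s_{k})|\big)\ge k > k-1$, no matter how $s_{0},\dots,s_{k-1}$ are chosen. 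The moving lemma lets you move the auxiliary divisors, but $\div(s_{k})$ is the one you must keep fixed, and it is exactly the one that may fail. The paper circumvents this by first reducing, via linearity, to the case $Y=V$ irreducible, and then splitting into the two cases $V\subset|\div(s_{k})|$ and $V\not\subset|\div(s_{k})|$; in the first case $\div(s_{k})\cdot V=0$, so the left-hand side is $0$ and both inequalities follow from the nonnegativity statement; in the second case the proper-intersection setup you want is actually available. Your nonnegativity assertion already gives you what is needed for the degenerate case, so the fix is short: before picking sections, reduce to irreducible $V$, treat $V\subset|\div(s_{k})|$ by appealing to the auxiliary assertion alone, and reserve the prop:recursive computation for $V\not\subset|\div(s_{k})|$.
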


\begin{proof}
  We reduce without loss of generality to the case when $Y=V$ is an
  irreducible subvariety of dimension $k$.  If
  $V\subset |\div(s_{k})|$, the first inequality is clear. For the
  second inequality, we choose rational sections $s_{i}$,
  $i=0,\dots, k-1$, and $s_{k}'$ such that
  $\div(s_{0}), \dots, \div(s_{k-1}), \div(s_{k}')$ intersect $Y$
  properly.  Using Lemmas \ref{lemm:1} and \ref{lemm:2}, the product
  formula and the fact that $\ol{D}_{0}$ is nef, we deduce that
  $\h_{\ol{D}_0,\ldots,\ol{D}_k}(Y)\ge 0$.

  Otherwise, $V\not\subset |\div(s_{k})|$ and we choose rational
  sections $s_{i}$, $i=0,\dots, k-1$, such that
  $\div(s_{0}), \dots, \div(s_{k})$ intersect $Y$ properly. The first
  inequality follows by applying the argument above to
  $\div(s_k)\cdot Y$, whereas the second one is given by
  Proposition~\ref{prop:recursive}.
\end{proof}

\begin{corollary}
  \label{cor:2}
  Let $\ol D_{0},\dots, \ol D_{n}$ be semipositive metrized divisors
  on $X$ such that $\ol{D}_{0}$ is nef and
  $\ol{D}_1,\dots, \ol{D}_{n}$ are generated by small sections. Let
  $s_{i}$ be $\ol{D}_{i}$-small sections, $i=1,\dots, n$. Then
\begin{displaymath}
 0\leq \h_{\ol{D}_{0}}\bigg(\prod_{i=1}^{n}\div(s_{i}) \bigg)\le \h_{\ol{D}_{0},\dots, \ol{D}_{n}}(X).
\end{displaymath}
\end{corollary}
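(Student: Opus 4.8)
The plan is to deduce Corollary~\ref{cor:2} from Proposition~\ref{prop:2} by the same iterative argument that was used to pass from Proposition~\ref{prop:5} to Corollary~\ref{cor:1} in the geometric setting. First I would observe that, since each $\ol{D}_i$ for $i=1,\dots,n$ is generated by small sections and each $s_i$ is $\ol{D}_i$-small, the $0$-cycle $\prod_{i=1}^n\div(s_i)\cdot X$ is effective; more generally, for each $j$ the partial product $\prod_{i=j}^n\div(s_i)\cdot X$ is an effective $(j-1)$-cycle, because intersecting an effective cycle with the divisor of a global section of a globally generated line bundle preserves effectivity (this is the content of Proposition~\ref{prop:0cycles}\eqref{item:2} at the level of underlying cycles, or simply the positivity built into Proposition~\ref{prop:2}).

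Next I would set up the induction. For $k$ ranging from $n$ down to $1$, I would apply Proposition~\ref{prop:2} to the effective $k$-cycle $Y_k:=\prod_{i=k+1}^n\div(s_i)\cdot X$ (with $Y_n:=X$) together with the semipositive metrized divisors $\ol{D}_0,\dots,\ol{D}_k$, using that $\ol{D}_0$ is nef, that $\ol{D}_1,\dots,\ol{D}_k$ are generated by small sections, and that $s_k$ is $\ol{D}_k$-small. This yields
\begin{displaymath}
0\le \h_{\ol{D}_0,\dots,\ol{D}_{k-1}}(\div(s_k)\cdot Y_k)\le \h_{\ol{D}_0,\dots,\ol{D}_k}(Y_k).
\end{displaymath}
Since $\div(s_k)\cdot Y_k=Y_{k-1}$, chaining these inequalities for $k=n,n-1,\dots,1$ gives
\begin{displaymath}
0\le \h_{\ol{D}_0}(Y_0)\le \h_{\ol{D}_0,\ol{D}_1}(Y_1)\le\cdots\le \h_{\ol{D}_0,\dots,\ol{D}_n}(Y_n)=\h_{\ol{D}_0,\dots,\ol{D}_n}(X),
\end{displaymath}
and $Y_0=\prod_{i=1}^n\div(s_i)\cdot X=\prod_{i=1}^n\div(s_i)$ is exactly the $0$-cycle in the statement, which is what we want.

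There is essentially no serious obstacle here; the result is a formal consequence of Proposition~\ref{prop:2}. The only points requiring a small amount of care are bookkeeping ones: checking that at each stage the hypotheses of Proposition~\ref{prop:2} are genuinely met, namely that the intermediate cycles $Y_k$ are effective (so that the Monge--Amp\`ere measures involved are honest measures and Proposition~\ref{prop:recursive} applies), and that the subfamily $\ol{D}_0,\dots,\ol{D}_k$ still satisfies ``$\ol{D}_0$ nef, the rest generated by small sections'' — which is immediate since it is a subfamily of the original one. One should also note that Proposition~\ref{prop:2}, and hence the corollary, is implicitly about the global height defined in Proposition-Definition~\ref{prop:4}, so the effectivity of the $Y_k$ is what guarantees that all the quantities appearing are well-defined elements of $\RR\cup\{+\infty\}$ and that the inequalities make sense; the finiteness of $\h_{\ol{D}_0,\dots,\ol{D}_n}(X)$ itself is not needed for the stated inequality, which only asserts a bound in $\RR\cup\{+\infty\}$.
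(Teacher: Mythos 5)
Your proof is correct and follows exactly the intended argument: the paper states Corollary~\ref{cor:2} without proof, leaving it as an immediate consequence of Proposition~\ref{prop:2} by the same downward induction used to pass from Proposition~\ref{prop:5} to Corollary~\ref{cor:1}, which is precisely what you write out.
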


\section{Metrics and heights on toric varieties} \label{sec:toric-varieties}

In this section, we recall the necessary background on the arithmetic
geometry of toric varieties following \cite{BPS14, BMPS12}. In the
second part of \S\ref{sec:intersection-theory}, we presented 
elements of the algebraic geometry of toric varieties over a
field. In the sequel, we will freely use the notation introduced
therein. 

Let $(\KK,\fM)$ be an adelic field satisfying the product formula. Let
$M\simeq \ZZ^{n}$ be a lattice and $\TT\simeq\GG_{\text{m},\KK}^n$ its
associated torus over $\KK$ as in \eqref{eq:29}.  For
$v\in\mathfrak{M}$, we denote by $\mathbb{T}^{\mathrm{an}}_v$ the
$v$-adic analytification of $\mathbb{T}$, and by $\mathbb{S}_v$ its
compact torus. In the Archimedean case, $\SS_v$ is isomorphic to
the polycircle $(S^ 1)^n$, whereas in the non-Archimedean case, it is
a compact analytic group, see~\cite[\S4.2]{BPS14} for a description.
Moreover, there is a map
$\val_v\colon \mathbb{T}^{\mathrm{an}}_v \rightarrow N_{\mathbb{R}}$
defined, in a given splitting, as
\begin{displaymath}
\val_{v}(x_1,\ldots,x_n) = (-\log\abs{x_1}_v,\ldots,-\log\abs{x_n}_v).  
\end{displaymath}
This map does not depend on the choice of the splitting, and the
compact torus~$\mathbb{S}_v$ coincides with its fiber over the point
$0\in N_\RR$.

Let $X$ be a projective toric variety with torus $\TT$ given by a
regular complete fan~$\Sigma$ on $N_{\RR}$, and $D$ a toric Cartier
divisor on $X$ given by a virtual support function $\Psi_{D}$
on~$\Sigma$. Recall that $X$ contains $\TT$ as a dense open
subset. Let $\|\cdot\|_{v}$ be a toric $v$-adic metric on $D$, that
is, a metric on the analytic line bundle $\cO(D)^{\an}_{v}$ that is
invariant under the action of $\SS_v$.  The associated \emph{$v$-adic
  metric function} is the continuous function
$\psi_{\norm{\cdot}_v}\colon N_{\mathbb{R}} \rightarrow \mathbb{R}$
given by
\begin{equation}
  \label{eq:35}
\psi_{\|\cdot\|_{v}} (u)=\log\norm{s_D(p)}_v,    
\end{equation}
for any $p\in \TT_{v}^{\an}$ with $\val_v(p)= u$ and where $s_{D}$ is
the distinguished rational section of~$\cO(D)$. This function
satisfies that $|\psi_{\norm{\cdot}_v}-\Psi_{D}|$ is bounded on
$N_{\RR}$ and moreover, this difference extends to a continuous
function on $N_{\Sigma}$, the compactification of $N_{\RR}$ induced by
the fan $\Sigma$. Indeed, the assignment
\begin{equation}\label{eq:30}
\norm{\cdot}_v\longmapsto \psi_{\norm{\cdot}_v}  
\end{equation}
is a one-to-one correspondence between the set of toric $v$-adic
metrics on $D$ and the set of such continuous functions on $N_{\RR}$
\cite[Proposition~4.3.10]{BPS14}.  In particular, the toric $v$-adic
metric on $D$ associated to the virtual support function $\Psi_D$ is
called the \emph{canonical} $v$-adic toric metric of $D$ and is
denoted by~$\norm{\cdot}_{v,\mathrm{can}}$.

Furthermore, when $\|\cdot\|_{v}$ is semipositive,
$\psi_{\|\cdot\|_{v}}$ is a concave function and it verifies that
$|\psi_{\norm{\cdot}_v}-\Psi_{D}|$ is bounded on $N_{\RR}$, and the
assignment in \eqref{eq:30} gives a one-to-one correspondence between
the set of semipositive toric $v$-adic metrics on $D$ and the set of
such concave functions on $N_{\RR}$. 

When $\|\cdot\|_{v}$ is semipositive, we also consider a continuous
concave function on the polytope
$\vartheta_{\|\cdot\|_{v}}\colon \Delta_{D}\to \RR$ defined as the
Legendre-Fenchel dual of $\psi_{\|\cdot\|_{v}}$, that is
\begin{displaymath}
  \vartheta_{\|\cdot\|_{v}}(x)= \inf_{u\in N_{\RR}} \langle x,u\rangle
  - \psi_{\|\cdot\|_{v}}(u). 
\end{displaymath}
The assignment $\|\cdot\|_{v}\mapsto \vartheta_{\|\cdot\|_{v}}$ is a
one-to-one correspondence between the set of semipositive toric
$v$-adic metrics on $D$ and that of continuous concave functions
on~$\Delta_{D}$.  Under this assignment, the canonical $v$-adic toric
metric of $D$ corresponds to the zero function on $\Delta_{D}$.

\begin{definition}\label{def:10}
  An \emph{(adelic) toric metric} on $D$ is a collection of toric
  $v$-adic metrics $(\|\cdot\|_{v})_{v\in \fM}$, such that
  $\|\cdot\|_{v}=\norm{\cdot}_{v,\mathrm{can}}$ for all but a finite
  number of $v\in \fM$.  We denote by
  $\ol D= (D, (\|\cdot\|_{v})_{v\in \fM})$ the corresponding \emph{(adelic)
  toric metrized divisor} on~$X$.
\end{definition}

\begin{example}
  \label{exm:6}
  The collection $(\|\cdot\|_{v,\can})_{v\in \fM}$ of $v$-adic toric
  metrics on $D$ is adelic in the sense of Definition \ref{def:10}.
  We denote by $\ol{D}^{\can}$ the corresponding \emph{canonical}
  toric metrized divisor on $X$.
\end{example}

Let $\ol{D}$ be a toric metrized divisor on $X$. For each $v\in \fM$,
we set
\begin{displaymath}
  \psi_{\ol{D}, v}=\psi_{\|\cdot\|_{v}} \and \vartheta_{\ol{D}, v}=\vartheta_{\|\cdot\|_{v}}
\end{displaymath}
for the associated \emph{$v$-adic metric function} and \emph{$v$-adic
  roof function}, respectively.

\begin{proposition}
  \label{prop:3}
  Let $\ol{D}=(D, (\|\cdot\|_{v})_{v\in \fM})$ be toric divisor
  together with a collection of toric $v$-adic metrics. If $\ol{D}$ is
  adelic in the sense of Definition \ref{def:10}, then it is also
  adelic in the sense of Definition \ref{def:5}. Moreover, both
  definitions coincide in the semipositive case.
\end{proposition}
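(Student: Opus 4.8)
The plan is to unwind the two notions of ``adelic'' and reduce everything to a statement about a single algebraic point $p\in X(\ol\KK)$ and a suitably chosen rational section. First I would fix a closed point $p$ of $X$; by replacing $\KK$ by a finite extension (which is harmless, since Definition \ref{def:5} quantifies over all adelic field extensions $(\FF,\fN)$ anyway) I may assume $p\in\TT(\KK)$, because the condition \eqref{eq:14} is automatically compatible with the decomposition of $p_v$ into the points $q_w$ over $w\in\fN_v$. For such a torus point I would take the distinguished rational section $s_D$ of $\cO(D)$, which is regular and non-vanishing on all of $\TT$. Then, by the very definition \eqref{eq:35} of the $v$-adic metric function, for any $p\in\TT_v^{\an}$ with $\val_v(p)=u$ one has $\log\|s_D(p)\|_v=\psi_{\|\cdot\|_v}(u)$, so the condition \eqref{eq:14}, namely $\|s_D(p_w^{\an})\|_w=1$, becomes the condition $\psi_{\ol D,v}(u)=0$ with $u=\val_v(p)$ for all but finitely many $v$ (using \eqref{eq:41} to pass from $\KK_v$ to $\FF_w$, which does not change the value of the metric function).

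The next step is to observe that for the finitely many places $v$ where $\|\cdot\|_v$ equals the canonical metric $\|\cdot\|_{v,\can}$, the associated metric function is exactly the virtual support function $\Psi_D$ (by construction of the canonical metric), and $\Psi_D(\val_v(p))$ is the value of a piecewise linear function with integral slopes at the point $\val_v(p)=(-\log|x_1|_v,\dots,-\log|x_n|_v)\in N_\RR$. Writing $p=(x_1,\dots,x_n)$ with $x_j\in\KK^\times$ and using that $|x_j|_v=1$ for all but finitely many $v$ (Definition \ref{def:1}(2)), I get $\val_v(p)=0$ for all but finitely many $v$, hence $\Psi_D(\val_v(p))=\Psi_D(0)=0$ for those $v$. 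Combining this with the previous paragraph — only finitely many $v$ have a non-canonical metric, and for each such $v$ the function $\psi_{\ol D,v}-\Psi_D$ extends continuously to $N_\Sigma$ and in particular is well-behaved — one concludes that $\|s_D(p_w^{\an})\|_w=1$ for all but finitely many $w$. This establishes that a toric adelic metrized divisor in the sense of Definition \ref{def:10} is adelic in the sense of Definition \ref{def:5}.

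For the ``moreover'' clause, in the semipositive case I would invoke the one-to-one correspondence (recalled just before Definition \ref{def:10}, from \cite[Proposition~4.3.10]{BPS14}) between semipositive toric $v$-adic metrics and concave functions $\psi$ on $N_\RR$ with $|\psi-\Psi_D|$ bounded. A collection $(\|\cdot\|_v)_v$ of semipositive toric metrics is adelic in the sense of Definition \ref{def:10} precisely when $\psi_{\ol D,v}=\Psi_D$ for all but finitely many $v$; I would show this is equivalent to the collection being adelic in the sense of Definition \ref{def:5} by running the argument of the previous two paragraphs in reverse — the point being that if $\psi_{\ol D,v}\neq\Psi_D$ for infinitely many $v$, then one can exhibit a torus point $p$ and a place where \eqref{eq:14} fails infinitely often, using that the bounded continuous function $\psi_{\ol D,v}-\Psi_D$ on the compact space $N_\Sigma$ is nonzero somewhere and that rational torus points are Zariski dense.

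The main obstacle I expect is the bookkeeping in the reverse direction of the ``moreover'' part: one must produce, from infinitely many non-canonical places, a \emph{single} algebraic point $p$ at which the finiteness condition \eqref{eq:14} genuinely fails, and this requires a density/diagonal argument ensuring that the valuation vectors $\val_v(p)$ land in the open locus $\{\psi_{\ol D,v}\neq\Psi_D\}$ for infinitely many $v$ simultaneously. The forward direction and the non-semipositive part are essentially formal once the translation via $s_D$ and $\psi_{\ol D,v}$ is set up.
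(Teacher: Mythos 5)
Your forward direction is on the right track but has a genuine gap: the claim that a finite extension of $\KK$ lets you ``assume $p\in\TT(\KK)$'' is not correct. Passing to a finite extension makes $p$ rational, but it does not move a boundary point of $X$ into the torus, and Definition~\ref{def:5} quantifies over \emph{all} points of $X(\ol\KK)$, not just torus points. For a boundary point $p$, the distinguished section $s_D$ vanishes or has a pole, so the translation $\log\|s_D\|_v=\psi_{\ol D,v}\circ\val_v$ cannot be used directly. The paper's proof avoids this by noting that the inclusion $\iota\colon p_\FF\hookrightarrow X_\FF$ is equivariant in the toric sense (factoring through the orbit closure), so the pullback $\iota^*\ol D$ is again a toric metrized divisor on a point and one reduces to $\psi_{\iota^*\ol D,w}(0)$. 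Your argument needs a parallel reduction to orbit closures to handle boundary points.

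For the ``moreover'' clause, you have correctly flagged the real difficulty, but the density/diagonal argument you propose would not work. The issue is precisely the one you suspect: for any fixed torus point $p$, $\val_v(p)=0$ for all but finitely many $v$, so a single torus point can only ``detect'' a non-canonical place $v$ if $\psi_{\ol D,v}(0)\neq 0$ --- and $\psi_{\ol D,v}\neq\Psi_D$ does not force $\psi_{\ol D,v}(0)\neq 0$. Hence no torus point exhibits infinitely many failures of \eqref{eq:14}, and Zariski density of rational torus points is not enough. The paper's proof takes a genuinely different route: it works with the roof function $\vartheta_{\ol D,v}$ on the compact polytope $\Delta_D$ rather than $\psi_{\ol D,v}$ on $N_\RR$, evaluates it at finitely many \emph{boundary} test points --- the $0$-dimensional orbits $p_i$ of $X$, corresponding to the vertices $x_i$ of $\Delta_D$, where $\vartheta_{\ol D,v}(x_i)=-\log\|s_D(p_i)\|_v$ --- and at the distinguished point $x_0\in\TT$, where $\max_{\Delta_D}\vartheta_{\ol D,v}=-\log\|s_D(x_0)\|_v$. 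Definition~\ref{def:5} at these finitely many points forces both the vertex values and the maximum of $\vartheta_{\ol D,v}$ to vanish for all but finitely many $v$, and concavity of $\vartheta_{\ol D,v}$ on $\Delta_D$ then forces $\vartheta_{\ol D,v}\equiv 0$, i.e.\ canonicity. This finite-test-point argument, relying crucially on boundary orbits and on concavity on the \emph{compact} polytope, is what replaces the diagonal argument you were trying to set up, and it is not recoverable by testing only at torus points.
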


\begin{proof}
Let  $p\in X(\ol{\KK})$ and choose  an adelic
  field extension $(\FF,\fN)$ such that $p\in X(\FF)$. Then $p_{\FF}$
  is a rational point of $X_{\FF}$ and the inclusion 
  \begin{displaymath}
   \iota\colon  p_{\FF}\longhookrightarrow X_{\FF}
  \end{displaymath}
is an equivariant map. Hence the inverse image $\iota^{*}\ol{D}$ is an
adelic toric metric on $p_{\FF}$ and so, for $w\in \fN$, 
\begin{displaymath}
  \log\|p_{\FF}\|_{w}= \psi_{\iota^{*}\ol{D},w}(0), 
\end{displaymath}
and this quantity vanishes for all but the finite number of $w\in \fN$
such that $\|\cdot\|_{w}$ is not the canonical metric. Since this
holds for all $p\in X(\ol{\KK})$, we conclude that $\ol{D}$ is adelic
in the sense of Definition \ref{def:5}.

For the second statement, assume that $\ol D$ is semipositive and adelic
in the sense of Definition \ref{def:5}. 
Let $x_{i}\in M$, $i=1,\dots, s$, be the vertices of the lattice
polytope $\Delta_{D}$. By \cite[Example
2.5.13]{BPS14}, for each $i$ there is an $n$-dimensional  cone $\sigma_{i}\in
\Sigma$ corresponding to $x_{i}$ under the Legendre-Fenchel
correspondence, $i=1,\dots, s$. 
Each of these  $n$-dimensional cones corresponds to a 0-dimensional
orbit $p_{i}$ of $X$. Denote by $\iota_{i}\colon p_{i}\hookrightarrow
X$ the inclusion of this orbit. 

Fix $1\le i\le s$. Modulo a translation, we can assume without loss of
generality that $x_{i}=0$. By \cite[Proposition 4.8.9]{BPS14}, for
$v\in \fM$,
\begin{displaymath}
  \vartheta_{\ol{D}, v}(x_{i})= \vartheta_{\iota_{i}^{*}\ol{D}, v}(0)=-\log\|s_{D}(p_{i})\|_{v}. 
\end{displaymath}
Hence $\vartheta_{\ol{D}, v}(x_{i})=0$ for all but a finite number of
$v$'s. 

On the other hand, let $x_{0}$ be the distinguished point of $X$,
which coincides with the neutral element of $\TT$, and denote by 
$\iota_{0}\colon x_{0}\hookrightarrow X$ its inclusion. By
\cite[Proposition~4.8.10]{BPS14},
\begin{displaymath}
\max_{x\in \Delta_{D}}  \vartheta_{\ol{D}, v}(x)= \vartheta_{\iota_{0}^{*}\ol{D}, v}(0)=-\log\|s_{D}(x_{0})\|_{v}. 
\end{displaymath}
Hence $\max_{x\in \Delta_{D}} \vartheta_{\ol{D}, v}(x)=0$ for all but a
finite number of $v$'s. 

For every $v\in \fM$ such that $\vartheta_{\ol{D},v}(x_{i})=0$ for all
$i$ and $\max_{x\in \Delta_{D}} \vartheta_{\ol{D}, v}(x)=0$, we have
that $ \vartheta_{\ol{D}, v}\equiv 0$ because this local roof function
is a concave function on $\Delta_{D}$.  Hence, $\|\cdot\|_{v}$
coincides with the $v$-adic canonical metric of $D$ for all these
places.
\end{proof}

\begin{remark}
  \label{rem:5}
  In the general non-semipositive case, Definitions \ref{def:10} and
  \ref{def:5} do not coincide. For instance, when $X=\PP^{1}_{\KK}$, a
  collection of metrics $\|\cdot\|_{v}$, $v\in \fM$, satisfies
  Definition \ref{def:5} if and only if its associated metric
  functions satisfy that
  \begin{displaymath}
    \psi_{\ol{D},v}(0)= 0 \and \lim_{u\to \pm\infty}\psi_{\ol{D},
      v}(u)- \Psi_{{D}}(u) =0
  \end{displaymath}
for all but a finite number of places. 
In the absence of convexity, these conditions do not imply that
$\psi_{\ol{D}, v}= \Psi_{D}$ for all but a finite number of places. 
\end{remark}

A classical example of toric metrized divisors are those given by the
inverse image of an equivariant map to a projective space equipped
with the canonical metric on its universal line bundle. Below we
describe this example and we refer to \cite[Example 5.1.16]{BPS14} for
the technical details.

Let $\bfm=(m_{0}, \dots, m_{r})\in M^{r+1}$ and
$\bfalpha=(\alpha_{0},\dots, \alpha_{r})\in (\KK^{\times})^{r+1}$, with
$r\ge 0$. The monomial map associated to this data is defined as
\begin{equation}
  \label{eq:42}
\varphi_{\bfm,\bfalpha} \colon \TT \longrightarrow
\PP^{r}_{\KK},
  \quad  p \longmapsto (\alpha_{ 0} \chi^{ m_{0}}(p):\dots: \alpha_{ r} \chi^{ m_{r}}(p)).
\end{equation}
For a toric variety $X$ with torus $\TT$ corresponding to fan that is
compatible with the polytope
$\Delta=\conv(m_{0}, \dots, m_{r})\subset M_{\RR}$, this extends to an
equivariant map $X\to \PP^{r}_{\KK}$, also denoted
by~$\varphi_{\bfm,\bfalpha} $.

\begin{example}\label{ex:canonicpullback0}
  With notation as above, let $\ol{E}^{\can}$ be the divisor of the
  hyperplane at infinity of $\PP^{r}_{\KK}$, equipped with the
  canonical metric at all places.  Then
  $D=\varphi_{\bfm,\bfalpha}^{*}E$ is the nef toric Cartier divisor on
  $X$ corresponding to the translated polytope $\Delta-m_{0}$.  We
  consider the semipositive toric metrized divisor
  $\ol{D}= \varphi_{\bfm,\bfalpha}^{*}\ol E$ on~$X$.

For each $v\in \fM$, the $v$-adic metric function of $\ol{D}$ is given by
\begin{displaymath}
\psi_{\ol{D},v}\colon N_{\mathbb{R}} \longrightarrow\RR, \quad u
\longmapsto \min_{ 0\le j\le r}\Big( \langle  m_{j}-m_{0},
u\rangle-\log\Big|\frac{\alpha_{j}}{\alpha_{0}}\Big|_v\Big) .
\end{displaymath}
The polytope corresponding to $D$ is $\Delta-m_{0}$ and, for each
$v\in \fM$, the $v$-adic roof function of $\ol{D}$ is given by
\begin{displaymath}
 \vartheta_{\ol{D},v}(x)= \max_{\bflambda}
\sum_{j=0}^{r}\lambda_{j}\log{|\alpha_{j}|_{v}}-\log|\alpha_{0}|_{v},
\end{displaymath}
the maximum being over the vectors
$\bflambda=(\lambda_{0},\dots,\lambda_{r})\in \RR_{\ge0}^{r+1}$ with
$\sum_{j=0}^{r} \lambda_{j}=1$ such that
$\sum_{j=0}^{r}\lambda_{j}(m_{j}-m_{0})=x$.  In other words, this is the
piecewise affine concave function on $\Delta-m_{0}$ parametrizing the upper
envelope of the extended polytope
\begin{displaymath}
  \conv\big(( m_{j}-m_{0},\log|\alpha_{j}/\alpha_{0}|_{v})_{ 0\le j\le r}\big)  \subset M_{\RR}\times \RR.
\end{displaymath}
\end{example}

\begin{definition}
\label{def:11}
For $i=0,\dots, n$, let $g_i:\Delta_i\rightarrow\RR$ be a concave
function on a convex body $\Delta_i\subset M_{\mathbb{R}}$.  The
\emph{mixed integral} of $g_0,\ldots,g_n$ is defined as
\[
\MI_M(g_0,\ldots,g_n)=
\displaystyle\sum_{j=0}^n (-1)^{n-j}
\displaystyle\sum_{0\leq i_0<\cdots<i_j\leq n}
\int_{\Delta_{i_0}+\cdots+\Delta_{i_j}}g_{i_0}\boxplus\cdots\boxplus g_{i_j}\,\mathrm{d}\vol_M,
\]
where $\Delta_{i_0}+\cdots+\Delta_{i_j}$ denotes the Minkowski sum of
polytopes, and $g_{i_0}\boxplus\cdots\boxplus g_{i_j}$ the
sup-convolution of concave function, which is the function on
$\Delta_{i_0}+\cdots+\Delta_{i_j}$ defined as
\begin{displaymath}
g_{i_0}\boxplus\cdots\boxplus g_{i_j}(x)= \sup \big( g_{i_0}( x_{i_0})+\cdots+g_{i_j}(x_{i_j})\big),  
\end{displaymath}
where the supremum is taken over $x_{i_{l}}\in \Delta_{i_{l}}$,
$l=0,\dots, j$, such
that $x_{i_0}+\cdots+ x_{i_j}= x$. 
\end{definition}

The mixed integral is symmetric and additive in each variable with
respect to the sup-convolution.  Moreover, for a concave function
$g\colon \Delta\rightarrow\RR$ on a convex body~$\Delta$, we have
$\MI_M(g,\ldots,g)=(n+1)!\int_{\Delta}g\,\mathrm{d}\vol_M$, see
\cite[\S8]{PhilipponSombra:rBKe} for details.

The following is a restricted version of a result by Burgos Gil,
Philippon and the second author, giving the global height of a toric
variety with respect to a family of semipositive toric metrized
divisors in terms of the mixed integrals of the associated local roof
functions \cite[Theorem 5.2.5]{BPS14}.

\begin{theorem}\label{thm:heightoftoricvariety}
Let $\ol{D}_i$, $i=0,\ldots,n$, be semipositive 
toric metrized  divisors on $X$ such that $\ol{D}_{1}, \dots,
\ol{D}_{n}$ are generated by small sections.  
Then 
\begin{equation}
  \label{eq:43}
\mathrm{h}_{\ol{D}_0,\ldots,\ol{D}_n}(X)=
\displaystyle\sum_{v\in\mathfrak{M}}
n_v\MI_M(\vartheta_{\ol{D}_0,v},\ldots,\vartheta_{\ol{D}_n,v}) .  
\end{equation}
\end{theorem}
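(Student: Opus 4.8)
The plan is to obtain the formula by combining the local computation of the height of a toric variety \cite[Theorem~5.1.6]{BPS14} with the product formula, exactly as in the proof of \cite[Theorem~5.2.5]{BPS14}; the only difference is that here the ambient field is an arbitrary adelic field satisfying the product formula and the global height is the one furnished by Proposition-Definition~\ref{prop:4}, rather than a global field together with a quasi-algebraic metrized divisor. By Proposition~\ref{prop:3}, each $\ol D_i$ is semipositive and adelic in the sense of Definition~\ref{def:5}; since moreover $\ol D_1,\dots,\ol D_n$ are generated by small sections, Proposition-Definition~\ref{prop:4} applies, so that for any rational sections $s_i$ of $\cO(D_i)$, $i=0,\dots,n$, whose divisors intersect $X$ properly one has
\begin{equation*}
  \h_{\ol D_0,\dots,\ol D_n}(X)=\sum_{v\in\fM}n_v\,\h_{\ol D_0,\dots,\ol D_n,v}(X;s_0,\dots,s_n),
\end{equation*}
the value of the right-hand side being independent of the choice of the $s_i$'s.

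First I would fix such sections $s_0,\dots,s_n$, which exist because $X$ is projective (moving lemma). For each $v\in\fM$, the metric $\norm{\cdot}_{i,v}$ is a semipositive toric $v$-adic metric on the nef divisor $D_i$, with associated roof function $\vartheta_{\ol D_i,v}\colon\Delta_{D_i}\to\RR$. By \cite[Theorems~5.1.6 and 1.4.17]{BPS14}, the local height of $X$ with respect to the $\ol D_i$ and their distinguished sections $s_{D_i}$ equals $\MI_M(\vartheta_{\ol D_0,v},\dots,\vartheta_{\ol D_n,v})$, and comparing with our sections $s_i$ (as in Lemma~\ref{lemm:1}) there is a $\gamma\in\KK^{\times}$, not depending on $v$, such that
\begin{equation*}
  \h_{\ol D_0,\dots,\ol D_n,v}(X;s_0,\dots,s_n)=\MI_M(\vartheta_{\ol D_0,v},\dots,\vartheta_{\ol D_n,v})-\log\abs{\gamma}_v .
\end{equation*}
Multiplying by $n_v$ and summing over $v\in\fM$, the product formula makes the contribution of $\gamma$ vanish, which gives \eqref{eq:43}. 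The resulting sum is finite: outside a finite set of places all the $\norm{\cdot}_{i,v}$ are canonical, so $\vartheta_{\ol D_i,v}\equiv 0$ for every $i$ there, and then $\MI_M(\vartheta_{\ol D_0,v},\dots,\vartheta_{\ol D_n,v})=0$ because the sup-convolution of the zero functions is again the zero function.

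The delicate step is the input \cite[Theorem~5.1.6]{BPS14}: for a nef toric divisor the distinguished section $s_{D_i}$ has divisor supported on the toric boundary, so the divisors $\div(s_{D_i})$ need not intersect $X$ properly, and extracting the combinatorial quantity $\MI_M$ from the local height requires the analysis of the $v$-adic Monge-Amp\`ere measures of toric varieties carried out in \emph{loc.\ cit.}, together with the identities for mixed integrals recalled in \cite[\S8]{PhilipponSombra:rBKe}. Once this is granted, the rest is the routine globalization via Proposition-Definition~\ref{prop:4} and the product formula described above.
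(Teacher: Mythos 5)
The paper states this theorem without proof, presenting it as a restricted version of \cite[Theorem~5.2.5]{BPS14}, and Remark~\ref{rem:6} records that the underlying result holds in greater generality and that the local heights of $X$ are zero outside a finite set of places. Your reconstruction — globalizing the local toric computation of \cite{BPS14} via Proposition-Definition~\ref{prop:4}, Lemma~\ref{lemm:1} and the product formula, with finiteness secured by noting that outside a finite set of places all the $v$-adic metrics are canonical and hence all roof functions vanish — is a faithful account of the argument the paper implicitly relies on, and you correctly flag the one genuinely delicate point: the distinguished sections $s_{D_i}$ do not intersect $X$ properly, so passing from a local height to the mixed integral must go through the Monge--Amp\`ere analysis of \emph{loc.\ cit.}\ rather than a literal application of Lemma~\ref{lemm:1} to the pair (generic sections, distinguished sections).
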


\begin{remark}
  \label{rem:6}
  The result in \cite[Theorem 5.2.5]{BPS14} is more general.
  Given semipositive toric metrized divisors $\ol{D}_i$,
  $i=0,\ldots,n$, and rational sections $s_{i}$ such that
  $\div(s_{0}),\dots, \div(s_{n})$ intersect $X$ properly, the
  corresponding local heights are zero except for a finite number of
  places, and the formula \eqref{eq:43} holds without any extra
  positivity assumption.
\end{remark}

\section{Proof of Theorem \ref{thm:1}} \label{sec:theorems}

Let $(\KK,\fM)$ be an adelic field satisfying the product formula. Let
$f\in \KK[M]$ be a Laurent polynomial and $\Delta\subset M_{\RR}$ its
Newton polytope. Let $X$ be a projective toric variety over $\KK$
given by a fan on $N_{\RR}$ that is compatible with $\Delta$, and $D$
the Cartier divisor on $X$ given by this polytope.  To prove
Theorem~\ref{thm:1}, we first construct a toric metric on $D$ such
that the associated toric metrized divisor $\ol{D}$ is semipositive
and generated by small sections, and the global section of $\cO(D)$
associated to $f$ is $\ol{D}$-small. We obtain this metrized divisor
as the inverse image of a metrized divisor on a projective space.

For $r\ge 0$, let $\PP^r_\KK$ be the $r$-dimensional projective space
over $\KK$ and $E$ the divisor of the hyperplane at infinity.  We
denote by $\ol{E}$ this Cartier divisor equipped with the
$\ell^{1}$-norm at the Archimedean places, and the canonical one at
the non-Archimedean ones. This metric is defined, for
$p=(p_{0}:\dots:p_{s})\in \PP^{s}_{\KK}(\ol{\KK}_{v})$ and a global section
$s$ of~$\cO(E)$ corresponding to a linear form
$\rho_{s}\in \KK[x_{0},\dots, x_{s}]$, by
\begin{equation} \label{eq:45}
    \|s(p)\|_{v}=
\begin{cases}
\displaystyle{\frac{|\rho_{s}(p_{0},\dots, p_{s})|_{v}}{\sum_{j}|p_{j}|_{v}}} & \text{ if } v \text{ is
  Archimedean}, \\[4mm]
\displaystyle{\frac{|\rho_{s}(p_{0},\dots, p_{s})|_{v}}{\max_{j}|p_{j}|_{v}}} & \text{ if } v \text{ is non-Archimedean}, 
\end{cases}
\end{equation}
The projective space $\PP^{r}_{\KK}$ has a standard structure of toric
variety with torus $\GmK^{r}$, included via the map
$(z_{1},\dots, z_{r})\mapsto (1:z_{1}:\dots:z_{r})$. Thus $\ol{E}$ is
a toric metrized divisor. It is a particular case of the weighted
$\ell^{p}$-metrized divisors on toric varieties studied in
\cite[\S5.2]{BPS:smthf}.

The following result summarizes the  basic properties of  this
toric metrized divisor and its combinatorial data.

\begin{proposition}
  \label{prop:9}
The toric metrized divisor  $\ol{E}$ on $\PP^r_{\KK}$ is  semipositive and generated by small sections.
For $v\in \fM$, its $v$-adic metric function is given, for $\bfu=(u_{1},\dots, u_{r})\in \RR^{r}$, by 
\begin{equation}\label{eq:44}
\psi_{\ol{E},v}(\bfu)=
\begin{cases}
-\log\Big(\displaystyle 1+\sum_{j=1}^{r}\e^{-u_j}\Big) & \mbox{if $v$ is Archimedean,}\\
\phantom{-}\min( 0,u_1,\ldots,u_r) & \mbox{if $v$ is non-Archimedean.}  
\end{cases}
\end{equation}
The polytope corresponding to $E$ is the standard simplex $\Delta^{r}$
of $\RR^{r}$. For $v\in \fM$, the $v$-adic roof function of
$\ol{E}$ is given, for $\bfx=(x_{1},\dots, x_{r})\in \Delta^{r}$, by
\begin{displaymath}
\vartheta_{\ol{E},v}(\bfx)=
\begin{cases}
\displaystyle{-\sum_{j=0}^rx_j\log(x_j)}
 & \mbox{if $v$ is Archimedean,}\\
\phantom{-} 0 & \mbox{if $v$ is non-Archimedean,}  
\end{cases}
\end{displaymath}
with  $x_0=1-\sum_{j=1}^rx_j$.
\end{proposition}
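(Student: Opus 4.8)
The plan is to read off everything from the explicit formula \eqref{eq:45} for the $\ell^{1}$-metric, using the dictionary between toric metrics and their metric and roof functions recalled in \S\ref{sec:toric-varieties}; alternatively, one may specialize the results of \cite[\S5.2]{BPS:smthf} on weighted $\ell^{p}$-metrized divisors to the case $p=1$. First I would fix the combinatorial data. With the torus $\GmK^r$ embedded in $\PP^r_\KK$ via $(z_1,\dots,z_r)\mapsto(1:z_1:\dots:z_r)$, the divisor $E=\{x_0=0\}$ is the toric Cartier divisor whose polytope is $\Delta_E=\conv(0,e_1,\dots,e_r)=\Delta^r$ and whose virtual support function is $\Psi_E(\bfu)=\min_{x\in\Delta^r}\langle x,\bfu\rangle=\min(0,u_1,\dots,u_r)$; under the identification \eqref{eq:5}, the lattice points $0,e_1,\dots,e_r$ correspond to the linear forms $x_0,x_1,\dots,x_r$, so the distinguished section $s_E$ is the one attached to $m=0$, that is, to $\rho_{s_E}=x_0$.

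To obtain the metric function I would apply its definition \eqref{eq:35}: given $\bfu\in\RR^r$, choose $p=(1:p_1:\dots:p_r)$ in $\TT^{\an}_v$ with $\val_v(p)=\bfu$, so that $|p_j|_v=\e^{-u_j}$, and substitute $\rho_{s_E}=x_0$ into \eqref{eq:45}. This gives $\|s_E(p)\|_v=\big(1+\sum_{j=1}^r\e^{-u_j}\big)^{-1}$ in the Archimedean case and $\|s_E(p)\|_v=\max(1,\e^{-u_1},\dots,\e^{-u_r})^{-1}$ in the non-Archimedean case; taking logarithms yields the formula \eqref{eq:44}. In both cases $\psi_{\ol E,v}$ is concave (a negated log-sum-exp, respectively a minimum of linear forms) and $|\psi_{\ol E,v}-\Psi_E|$ is bounded on $N_\RR$: it vanishes in the non-Archimedean case, and lies in $[0,\log(r+1)]$ in the Archimedean one. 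By the bijective correspondence \eqref{eq:30} between semipositive toric $v$-adic metrics and concave functions with bounded difference from $\Psi_E$, the metric $\ol E$ is therefore semipositive at every place; and since its non-Archimedean metrics are the canonical ones, it is adelic in the sense of Definition~\ref{def:10}.

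For the statement that $\ol E$ is generated by small sections, I would check directly from \eqref{eq:45} that each coordinate section $x_i$, $0\le i\le r$, satisfies $\|x_i(q)\|_v\le 1$ for every $q\in(\PP^r_\KK)^{\an}_v$ and every $v$---the $\ell^{1}$-norm, respectively the max-norm, in the denominator dominates $|q_i|_v$---so that $x_i$ is $\ol E$-small in the sense of Definition~\ref{def:14}. Since every closed point $p\in\PP^r_\KK$ has a nonzero coordinate, $p\notin|\div(x_i)|$ for the corresponding index $i$, which is precisely the condition in Definition~\ref{def:9}\eqref{item:10}.

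Finally, the $v$-adic roof function is the Legendre-Fenchel dual $\vartheta_{\ol E,v}=\psi_{\ol E,v}^\vee$ on $\Delta^r$. In the non-Archimedean case $\psi_{\ol E,v}=\Psi_E$ is the support function of $\Delta^r$, whose dual is the zero function---equivalently, the canonical metric corresponds to the zero roof function, as recalled in \S\ref{sec:toric-varieties}. In the Archimedean case, writing $x_0=1-\sum_{j=1}^r x_j$, we have $\vartheta_{\ol E,v}(\bfx)=\inf_{\bfu\in\RR^r}\big(\langle\bfx,\bfu\rangle+\log(1+\sum_{j=1}^r\e^{-u_j})\big)$; I would evaluate this by a Lagrange-multiplier computation, whose critical point is characterized by $\e^{-u_j}=x_j/x_0$, and substituting back gives the value $-\sum_{j=0}^r x_j\log x_j$. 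Convexity of the objective shows that this critical value is the infimum on the relative interior of $\Delta^r$, and since $\vartheta_{\ol E,v}$ is continuous and concave on $\Delta^r$ the formula extends to its faces with the convention $0\log 0=0$. I expect no genuine obstacle: the only step that is not a direct unwinding of the definitions of \S\ref{sec:toric-varieties} is this last Legendre transform---recognizing the classical duality between the log-partition function and the negative Shannon entropy---which is itself a short, standard calculation.
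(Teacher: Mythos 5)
Your proof is correct and follows essentially the same route as the paper's: read the metric function off the definition \eqref{eq:35} and the explicit formula \eqref{eq:45}, check concavity to get semipositivity, verify the coordinate sections are small and globally generating, and compute the roof function as the Legendre--Fenchel dual (the paper delegates the Archimedean Legendre transform to the Fubini--Study computation in \cite[Example~2.4.3]{BPS14}, whereas you carry out the short Lagrange-multiplier calculation directly---a harmless and in fact more self-contained variant).
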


\begin{proof}
  The distinguished rational section of the line bundle $\cO(E)$ corresponds to the linear form
  $x_{0}\in \KK[x_{0},\dots, x_{r}]$.  Hence, for an Archimedean place
  $v$ and a point $\bfz=(z_{1},\dots, z_{r})\in \GmK^{r}(\ol{\KK}_{v})$,
\begin{displaymath}
  \psi_{\ol{E},v}(\val_{v}(\bfz))= \log\|s_{E}(\bfz)\|_{v}= -\log\Big(
1+  \sum_{j=1}^{r} |z_{j}|  \Big), 
\end{displaymath}
which gives the expression in \eqref{eq:44} for this case. The
non-Archimedean case is done similarly.  We can easily check that these
metric functions are concave. In the Archimedean case, this can be
done by computing its Hessian and verifying that it is nonpositive
and, in the non-Archimedean case, it is immediate from its
expression. Hence, $\ol{E}$ is semipositive.

Set $s_{j}$ for the global sections corresponding to the linear forms
$x_{j}\in \KK[x_{0},\dots, x_{r}]$, $j=0,\dots, r$. We have that
$\bigcap_{j=0}^{r}|\div(s_{j})|=\emptyset$, and so this is a set of
generating global sections. It follows from the definition of the metric in
\eqref{eq:45} that these global sections are $\ol{E}$-small. Hence, $\ol{E}$
is generated by small sections.

The fact that the polytope corresponding to $E$ is the standard
simplex is classical, see for instance \cite[page 27]{Ful93}.  When
$v$ is Archimedean, the $v$-adic roof function can be computed
similarly as the one for the Fubini-Study metric in
\cite[Example~2.4.3]{BPS14}. When $v$ is non-Archimedean, $v$-adic
roof function is zero, because the metric $\|\cdot\|_{v}$ is
canonical.
\end{proof}

Set $r\ge 0$. Take $\bfm=(m_{0},\dots, m_{r})\in M^{r+1}$ and
$\bfalpha=(\alpha_{0},\dots, \alpha_{r}) \in (\KK^{\times})^{r+1}$,
and consider the polytope
$\Delta=\conv(m_{0},\dots, m_{r})\subset M_{\RR}$.  Let $X$ be a
projective toric variety over $\KK$ given by a fan on $N_{\RR}$ that
is compatible with $\Delta$.  Let
$ \varphi_{\bfm,\bfalpha} \colon \TT \rightarrow \PP^{r}_{\KK}$ be the
monomial map in \eqref{eq:42} and set
\begin{equation*}
  D_{\bfm}=\div(\chi^{-m_{0}})+\varphi_{\bfm,\bfalpha} ^{*}{E},
\end{equation*}
which coincides with the Cartier divisor on $X$ corresponding to
$\Delta$. For each $v\in \fM$, we consider the metric on
$\cO(D_{\bfm})^{\an}_{v}\simeq\cO(\varphi_{\bfm,\bfalpha}
^{*}{E})^{\an}_{v}$ defined by
\begin{equation} \label{eq:51}
  \|\cdot\|_{\bfm,\bfalpha,v}= |\alpha_{0}|_{v}^{-1} \varphi_{\bfm,\bfalpha}
^{*}\|\cdot\|_{\ol{E}, v},
\end{equation}
the homothecy by $|\alpha_{0} |_{v}$ of the inverse image by
$ \varphi_{\bfm,\bfalpha}$ of the $v$-adic metric of $\ol{E}$.
We then set 
\begin{equation}
  \label{eq:46}
  \ol{D}_{\bfm,\bfalpha}= (D_{\bfm},(\|\cdot\|_{\bfm,\bfalpha,v})_{v\in \fM}).
\end{equation}
Since  $ \varphi_{\bfm,\bfalpha}$ is an equivariant map and $\ol{E}$ is
toric, this is a toric metrized divisor on $X$.

\begin{proposition}
\label{prop:10}
The toric metrized divisor $\ol{D}=\ol{D}_{\bfm,\bfalpha}$ on $X$ is semipositive and
generated by small sections. For $v\in \fM$, its $v$-adic  metric is given, for
$p\in \TT(\ol{\KK}_{v})$, by
\begin{equation} \label{eq:48}
    \|s_{D}(p)\|_{v}=
\begin{cases}
{\bigg({\displaystyle \sum_{j=0}^{r}|\alpha_{j} \chi^{m_{j}}(p)|_{v}}\bigg)^{-1}} & \text{ if } v \text{ is
  Archimedean}, \\[4mm]
\Big(\displaystyle \max_{0\le j\le r}|\alpha_{j} \chi^{m_{j}}(p)|_{v}\Big)^{-1}  & \text{ if } v \text{ is non-Archimedean}.
\end{cases}
\end{equation}
The $v$-adic metric function of $\ol{D}$ is given, for $u\in N_{\RR}$,
by
\begin{equation}\label{eq:50}
\psi_{\ol{D},v}(u)=
\begin{cases}
-\log\bigg(\displaystyle\sum_{j=0}^{r}\Abs{{\alpha_j}}_v\e^{-\langle m_{j},u\rangle}\bigg) & \mbox{if $v$ is Archimedean,}\\
\displaystyle  \phantom{-} \min_{ 0\le j\le r}\langle  m_{j},
u\rangle-\log\Abs{{\alpha_{j}}}_v& \mbox{if $v$ is non-Archimedean},
\end{cases}
\end{equation}
and the $v$-adic roof function of
$\ol{D}$ is given, for $x\in  \Delta$, by
\begin{equation}\label{eq:49}
\vartheta_{\ol{D},v}(x)=
\begin{cases}
\displaystyle  \max_{\bflambda}
\sum_{j=0}^{r}\lambda_{j}\log \Big(
\frac{|\alpha_{j}|_{v}}{\lambda_{j}}\Big)& \mbox{if $v$ is Archimedean,}\\
\displaystyle  \max_{\bflambda}
\sum_{j=0}^{r}\lambda_{j}\log{|\alpha_{j}|_{v}} & \mbox{if $v$ is non-Archimedean,}\\
\end{cases}
\end{equation}
the maximum being over the vectors
$\bflambda=(\lambda_{0},\dots,\lambda_{r})\in \RR_{\ge0}^{r+1}$ with
$\sum_{j=0}^{r} \lambda_{j}=1$ such that
$\sum_{j=0}^{r}\lambda_{j}m_{j}=x$. 
\end{proposition}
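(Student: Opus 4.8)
The plan is to first establish the explicit formulas \eqref{eq:48} and \eqref{eq:50}, deduce from them the semipositivity of $\ol{D}=\ol{D}_{\bfm,\bfalpha}$, then exhibit explicit $\ol{D}$-small generating sections, and finally obtain the roof function \eqref{eq:49} by Legendre-Fenchel duality.

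First I would identify the distinguished rational section $s_{D}$ of $\cO(D_{\bfm})$ under the isomorphism $\cO(D_{\bfm})^{\an}_{v}\simeq\cO(\varphi_{\bfm,\bfalpha}^{*}E)^{\an}_{v}$ induced by the principal divisor $\div(\chi^{-m_{0}})$: it corresponds to $\chi^{-m_{0}}$ times the distinguished section $\varphi_{\bfm,\bfalpha}^{*}s_{E}$ of $\cO(\varphi_{\bfm,\bfalpha}^{*}E)$, where $s_{E}$ is the distinguished section of $\cO(E)$ on $\PP^{r}_{\KK}$. Plugging this into the definition \eqref{eq:51} of $\|\cdot\|_{\bfm,\bfalpha,v}$ and into the defining formula \eqref{eq:45} for the metric of $\ol{E}$, evaluated at $\varphi_{\bfm,\bfalpha}(p)=(\alpha_{0}\chi^{m_{0}}(p):\dots:\alpha_{r}\chi^{m_{r}}(p))$, the factor $\Abs{\chi^{-m_{0}}(p)}_{v}$ and the homothecy factor $\Abs{\alpha_{0}}_{v}^{-1}$ cancel against the numerator $\Abs{\alpha_{0}\chi^{m_{0}}(p)}_{v}$, leaving exactly \eqref{eq:48}. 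Formula \eqref{eq:50} is then immediate: evaluate \eqref{eq:48} at a point $p$ with $\val_{v}(p)=u$, using $\Abs{\chi^{m_{j}}(p)}_{v}=\e^{-\langle m_{j},u\rangle}$, and take $-\log$.

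Semipositivity can be read off from \eqref{eq:50}: in the Archimedean case $\psi_{\ol{D},v}(u)=-\log\sum_{j}\Abs{\alpha_{j}}_{v}\e^{-\langle m_{j},u\rangle}$ is concave (the logarithm of a sum of exponentials of affine forms being convex), and in the non-Archimedean case $\psi_{\ol{D},v}$ is a minimum of affine functions, hence concave; moreover $\psi_{\ol{D},v}$ differs from $\Psi_{D_{\bfm}}=\Psi_{\Delta}=\min_{0\le j\le r}\langle m_{j},\cdot\rangle$ by a bounded amount, and this difference extends continuously to $N_{\Sigma}$. By the correspondence between semipositive toric $v$-adic metrics and concave functions recalled in \S\ref{sec:toric-varieties} (see \cite[Proposition~4.3.10]{BPS14}), each $\|\cdot\|_{\bfm,\bfalpha,v}$ is semipositive; and since $\psi_{\ol{D},v}=\Psi_{\Delta}$ whenever $v$ is non-Archimedean with all $\Abs{\alpha_{j}}_{v}=1$, only finitely many of these metrics are non-canonical, so $\ol{D}$ is a semipositive toric metrized divisor. (Alternatively, semipositivity follows from that of $\ol{E}$ in Proposition \ref{prop:9}, as it is preserved under inverse image by the equivariant morphism $\varphi_{\bfm,\bfalpha}$ and under homothecy by a positive constant.) To see that $\ol{D}$ is generated by small sections, I would take for $j=0,\dots,r$ the global section $t_{j}$ of $\cO(D_{\bfm})$ attached to the monomial $\alpha_{j}\chi^{m_{j}}$, which is global because $m_{j}\in\Delta=\Delta_{D_{\bfm}}$. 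By \eqref{eq:48}, on $\TT(\ol{\KK}_{v})$ one has $\Abs{t_{j}(p)}_{\bfm,\bfalpha,v}=\Abs{\alpha_{j}\chi^{m_{j}}(p)}_{v}\big/\big(\sum_{k}\Abs{\alpha_{k}\chi^{m_{k}}(p)}_{v}\big)\le 1$ (resp. with $\max_{k}$ in place of the sum), being a single nonnegative term of the denominator; by continuity and density of $\TT_{v}^{\an}$ in $X_{v}^{\an}$ the bound extends to all of $X_{v}^{\an}$, so each $t_{j}$ is $\ol{D}$-small. Since $\Delta=\conv(m_{0},\dots,m_{r})$, every vertex of $\Delta$ is one of the $m_{j}$; hence for each cone $\sigma$ of the fan the face of $\Delta$ determined by $\sigma$ contains some $m_{j}$, so $t_{j}$ does not vanish on the corresponding orbit and the $t_{j}$ have no common zero on $X$. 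Therefore $\ol{D}$ is generated by small sections.

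Finally, \eqref{eq:49} is the Legendre-Fenchel dual $\vartheta_{\ol{D},v}=\psi_{\ol{D},v}^{\vee}$ of \eqref{eq:50}. In the non-Archimedean case $\psi_{\ol{D},v}(u)=\min_{j}(\langle m_{j},u\rangle-\log\Abs{\alpha_{j}}_{v})$, whose dual is the piecewise affine concave function on $\Delta$ parametrizing the upper envelope of $\conv\big((m_{j},\log\Abs{\alpha_{j}}_{v})_{0\le j\le r}\big)\subset M_{\RR}\times\RR$, that is, the stated maximum of $\sum_{j}\lambda_{j}\log\Abs{\alpha_{j}}_{v}$ over $\bflambda\in\RR_{\ge0}^{r+1}$ with $\sum_{j}\lambda_{j}=1$ and $\sum_{j}\lambda_{j}m_{j}=x$; this also matches Example \ref{ex:canonicpullback0} once the polytope is translated by $m_{0}$ and the homothecy is absorbed. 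In the Archimedean case, dualizing $u\mapsto-\log\sum_{j}\Abs{\alpha_{j}}_{v}\e^{-\langle m_{j},u\rangle}$ is the classical Legendre transform of a (weighted) log-sum-exp (as for the Fubini-Study metric, cf. \cite[Example~2.4.3]{BPS14}); it yields the entropy-type expression $\max_{\bflambda}\sum_{j}\lambda_{j}\log(\Abs{\alpha_{j}}_{v}/\lambda_{j})$ over the same set of $\bflambda$, with the convention $0\log 0=0$. I expect this final duality computation to be the only step needing genuine, though routine, care; everything else is unwinding the definitions together with elementary toric geometry.
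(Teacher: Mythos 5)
Your proof is correct and follows essentially the same route as the paper: unwind \eqref{eq:51} and \eqref{eq:45} with $s_D=\chi^{-m_0}\,\varphi_{\bfm,\bfalpha}^*s_E$ to get \eqref{eq:48} and hence \eqref{eq:50}, read off semipositivity, and obtain \eqref{eq:49} by Legendre--Fenchel duality. There are two small differences worth noting.

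For ``generated by small sections'' the paper argues indirectly: Proposition~\ref{prop:9} gives the property for $\ol{E}$, inverse image under the equivariant morphism $\varphi_{\bfm,\bfalpha}$ preserves it, and the homothecy by $|\alpha_0|_v$ is absorbed by the observation that $\varsigma$ is $\ol{D'}$-small iff $\alpha_0\varsigma$ is $\ol{D}$-small. You instead exhibit the explicit small generating family $t_j=\alpha_j\varsigma_{m_j}$, $j=0,\dots,r$, check smallness directly from \eqref{eq:48} (extending from $\TT_v^{\an}$ to $X_v^{\an}$ by density) and check generation by noting every face of $\Delta=\conv(m_0,\dots,m_r)$ contains some $m_j$, so on every orbit some $t_j$ is nonvanishing. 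Both arguments are valid; yours is a bit longer but more concrete.

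For the Archimedean roof function, the paper is more careful than your sketch: since the $m_j$ need not be affinely independent, one cannot literally ``invert'' the log-sum-exp. The paper factors $\psi_{\ol{D},v}=H^*g_v$ through the linear map $H\colon N_\RR\to\RR^{r+1}$, $H(u)=(\langle m_0,u\rangle,\dots,\langle m_r,u\rangle)$, computes $g_v^\vee$ on the standard simplex (the genuine entropy duality), and then applies \cite[Proposition~2.3.8(3)]{BPS14} to obtain $\vartheta_{\ol{D},v}$ as the direct image of $g_v^\vee$ under $H^\vee$; this is exactly what produces the maximum over all $\bflambda\in\RR_{\ge0}^{r+1}$ with $\sum\lambda_j=1$ and $\sum\lambda_j m_j=x$. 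You flag this step as the one requiring care, which is the right instinct; just be aware that the clean way to carry it out is through the inverse/direct image formalism for Legendre--Fenchel duality, not a bare change of variables.
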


\begin{proof}
  Set $\ol{D'}=\varphi_{\bfm,\bfalpha} ^{*}\ol{E}$ for short. This is
  a toric metrized divisor on $X$ that is semipositive and generated
  by small sections, due to Proposition~\ref{prop:9} and the
  preservation of these properties under inverse image.  Since the
  $v$-adic metrics of $ \ol{D}$ are homothecies of those of $\ol{D'}$,
  it follows that $\ol{D}$ is semipositive too. Moreover, a global section
  $\varsigma$ of $\cO(D')\simeq\cO(D)$ is $\ol{D'}$-small if and only
  if the global section $\alpha_{0}\, \varsigma$ is $\ol{D}$-small. It
  follows that $\ol{D}$ is also generated by small sections.

  Using \eqref{eq:45} and the definition of the monomial map
  $\varphi_{\bfm,\bfalpha}$, for $v\in \fM$, the
  $v$-adic metric of $\ol{D'}$ is given, for $p\in \TT(\ol{\KK}_{v})$,
  by
\begin{equation*}
    \|s_{D'}(p)\|_{v}=
\begin{cases}
{\bigg({\displaystyle  \sum_{j=0}^{r}\bigg|\frac{\alpha_{j}}{\alpha_{0}} \chi^{m_{j}-m_{0}}(p)\bigg|_{v}}\bigg)^{-1}} & \text{ if } v \text{ is
  Archimedean}, \\[4mm]
\bigg(\displaystyle \max_{0\le j\le r}\bigg|\frac{\alpha_{j}}{\alpha_{0}} \chi^{m_{j}-m_{0}}(p)\bigg|_{v}\bigg)^{-1}  & \text{ if } v \text{ is non-Archimedean}.
\end{cases}
\end{equation*}
Since $D=\div(\chi^{-m_{0}})+D'$, their distinguished
rational sections are related by $s_{D}=\chi^{-m_{0}} s_{D'}$. It
follows from \eqref{eq:51} that
\begin{displaymath}
    \|s_{D}(p)\|_{v}=  |\alpha_{0}|_{v}^{-1}  |\chi^{-m_{0}}(p)|_{v}\,
    \|s_{D'}(p)\|_{v},
\end{displaymath}
which implies the formulae in \eqref{eq:48}. 
As a consequence, we obtain also the expressions for the $v$-adic
metric functions of $\ol{D}$.

For its roof
function, consider first the linear map $H\colon N_{\RR}\to \RR^{r+1}$
given, for $u\in N_{\RR}$, by
$H(u)= (\langle m_{0},u\rangle, \dots, \langle m_{r},u\rangle)$.
For each place $v$, consider the concave function
$g_{v}\colon \RR^{r+1}\to \RR$ given, for $\bfnu\in\RR^{r+1}$, by
\begin{displaymath}
  g_{v}(\bfnu)=
  \begin{cases}
 -\log\bigg(\displaystyle  \sum_{j=0}^{r}|\alpha_{j}|_{v}\e^{-\nu_{j}}\bigg)  & \text{ if } v \text{ is
  Archimedean}, \\
\displaystyle\phantom{-} \min_{0\le j\le r} \nu_{j}-\log|\alpha_{j}|_{v}  & \text{ if } v \text{ is
  non-Archimedean}.
  \end{cases}
\end{displaymath}
Notice that $\psi_{\ol{D},v}=H^{*}g_{v}$.  The domain of the
Legendre-Fenchel dual of $g_{v}$ is the simplex $S$ given as the
convex hull of the vectors in the standard basis of $\RR^{r+1}$.  This
Legendre-Fenchel dual is given, for $\bflambda\in S$, by
\begin{displaymath}
  g_{v}^{\vee}(\bflambda)=
  \begin{cases}
\displaystyle \sum_{j=0}^{r}\lambda_{j}\log \Big(
\frac{|\alpha_{j}|_{v}}{\lambda_{j}}\Big)& \mbox{if $v$ is Archimedean,}\\
\displaystyle  \max_{\bflambda}
\sum_{j=0}^{r}\lambda_{j}\log{|\alpha_{j}|_{v}} & \mbox{if $v$ is non-Archimedean.}
  \end{cases}
\end{displaymath}
For the Archimedean case, this formula follows from \cite[Proposition
5.8]{BPS:smthf}, whereas in the non-Archimedean case, it is given by
Example \ref{ex:canonicpullback0}. 

By \cite[Proposition~2.3.8(3)]{BPS14}, the $v$-adic roof function
$\vartheta_{\ol{D},v}$ is the direct image under the dual map
$H^{\vee}$ of the Legendre-Fenchel dual $g_{v}^{\vee}$, which gives
the stated formulae in~\eqref{eq:49}.
\end{proof}

\begin{definition}
  \label{def:12}
  Let $f\in \KK[M]$ be a Laurent polynomial and $X$ be a projective
  toric variety over $\KK$ given by a fan on $N_{\RR}$ that is
  compatible with the Newton polytope~$\newton(f)$.  Write
  $f=\sum_{j=0}^{r}\alpha_{j} \chi^{m_{j}}$ with $m_{j}\in M$ and
  $\alpha_{j}\in \KK^{\times}$.
The \emph{toric metrized divisor on $X$ associated to $f$} is defined as
\begin{displaymath}
  \ol{D}_{f}=\ol{D}_{\bfm,\bfalpha}, 
\end{displaymath}
the toric metrized divisor in \eqref{eq:46} for the data
$\bfm=(m_{0},\dots, m_{r})\in M^{r+1}$ and
$\bfalpha=(\alpha_{0},\dots, \alpha_{r})\in (\KK^{\times})^{r+1}$.  It
does not depend on the ordering of the terms of $f$. For $v\in \fM$,
we denote by $ \psi_{f,v}$ and $ \vartheta_{f,v}$ the $v$-adic metric
and roof functions of $\ol{D}_{f}$, respectively.
\end{definition}

\begin{lemma}
  \label{lemm:3}
    With notation as in Definition \ref{def:12}, the global section of
  $\cO(D_{f})$ associated to ${f}$ is $\ol{D}_{f}$-small.
\end{lemma}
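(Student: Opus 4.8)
The plan is to unwind the definitions and reduce the statement to a pointwise inequality on the analytic space $X_v^{\mathrm{an}}$, for each place $v\in\fM$. Write $f=\sum_{j=0}^r\alpha_j\chi^{m_j}$ with $\alpha_j\in\KK^\times$, and let $\bfm=(m_0,\dots,m_r)$, $\bfalpha=(\alpha_0,\dots,\alpha_r)$, so that $\ol D_f=\ol D_{\bfm,\bfalpha}$ as in \eqref{eq:46}. Under the decomposition \eqref{eq:5} of global sections, the section attached to $f$ is $\varsigma_f=\sum_{j=0}^r\alpha_j\,\varsigma_{m_j}$, where each $\varsigma_{m_j}=\chi^{m_j}s_{D}$ with $s_D$ the distinguished rational section; equivalently, on the torus $\TT$ one has $\varsigma_f = f\cdot s_D$ under the identification of $s_D$ with the trivializing section. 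Thus for $p\in\TT(\ol\KK_v)$ we get $\|\varsigma_f(p)\|_v=|f(p)|_v\,\|s_D(p)\|_v$. By the explicit formula \eqref{eq:48} of Proposition \ref{prop:10}, the right-hand side equals
\[
\frac{|f(p)|_v}{\sum_{j=0}^r|\alpha_j\chi^{m_j}(p)|_v}\quad\text{($v$ Archimedean)}, \qquad
\frac{|f(p)|_v}{\max_{0\le j\le r}|\alpha_j\chi^{m_j}(p)|_v}\quad\text{($v$ non-Archimedean)}.
\]
In both cases the triangle inequality (resp. the ultrametric inequality) applied to $f(p)=\sum_j\alpha_j\chi^{m_j}(p)$ gives $|f(p)|_v\le \sum_j|\alpha_j\chi^{m_j}(p)|_v$ (resp. $\le\max_j|\alpha_j\chi^{m_j}(p)|_v$), so $\|\varsigma_f(p)\|_v\le 1$ for all $p\in\TT(\ol\KK_v)$.

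It remains to pass from the dense open torus to all of $X_v^{\mathrm{an}}$. Here I would invoke continuity: the function $q\mapsto\|\varsigma_f(q)\|_v$ is continuous on $X_v^{\mathrm{an}}$ (by definition of a metric), $\TT_v^{\mathrm{an}}$ is dense in $X_v^{\mathrm{an}}$ since $\TT$ is dense in $X$ and analytification preserves this density, and the subset where a continuous function is $\le 1$ is closed; hence the bound $\|\varsigma_f(q)\|_v\le 1$ extends from $\TT_v^{\mathrm{an}}$ to $X_v^{\mathrm{an}}$. Since this holds for every $v\in\fM$, the global section $\varsigma_f$ is $\ol D_f$-small in the sense of Definition \ref{def:14}. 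An alternative, avoiding density, is purely combinatorial: by the toric dictionary a toric metric's sup-norm of $\varsigma_f=\sum_j\alpha_j\varsigma_{m_j}$ is controlled by the values $\vartheta_{f,v}(m_j)+\log|\alpha_j|_v$ at the vertices $m_j$, and the roof-function formula \eqref{eq:49} gives $\vartheta_{f,v}(m_j)\ge -\log|\alpha_j|_v$ (take $\bflambda=e_j$), which yields the same conclusion; but the continuity argument is shorter.

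The only mild subtlety — the place where one must be a little careful rather than where there is a genuine obstacle — is making sure the identification of $\varsigma_f$ with $f\cdot s_D$ is set up so that $\|s_D(p)\|_v$ in \eqref{eq:48} is exactly the factor that appears, i.e. bookkeeping the homothety by $|\alpha_0|_v^{-1}$ in \eqref{eq:51} and the twist by $\div(\chi^{-m_0})$ consistently with how $f$ (rather than $f/\alpha_0$ or $\chi^{-m_0}f$) is expanded. Once the normalizations in Proposition \ref{prop:10} are taken at face value, the inequality is immediate from the (ultra)triangle inequality, so I do not expect any real difficulty.
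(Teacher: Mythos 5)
Your proposal is correct and follows essentially the same route as the paper: writing the section as $f\cdot s_D$, invoking formula \eqref{eq:48} of Proposition~\ref{prop:10}, and applying the triangle (resp.\ ultrametric) inequality to bound $\|s(p)\|_v\le 1$ on the torus. The paper's proof is simply more terse, leaving the (ultra)triangle inequality and the density/continuity extension from $\TT_v^{\an}$ to $X_v^{\an}$ implicit, whereas you spell them out; your alternative combinatorial argument via the roof function is a valid but unnecessary detour.
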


\begin{proof}
  Set $\ol{D}=\ol{D}_{f}$ for short, and let $s=f s_{D}$ be the
  global section of $\cO(D)$ associated to ${f}$. For $v\in \fM$ and
  $p\in \TT(\ol{\KK}_{v})$,
\begin{displaymath}
  \|s(p)\|_{v}= |f(p)|_{v} \, \|s_{D}(p)\|_{v}= 
  \bigg| {\displaystyle
    \sum_{j=0}^{r}\alpha_{j}\chi^{m_{j}}( p)}\bigg|_v\|s_{D}(p)\|_{v}.
\end{displaymath}
It follows from \eqref{eq:48} that $\|s\|_{v}\le 1$ on
$\TT(\ol{\KK}_{v})$, and so $s$ is $\ol{D}$-small.
\end{proof}

The following result corresponds to Theorem \ref{thm:1} in the
introduction.

\begin{theorem}
\label{thm:4}
  Let $f_1,\ldots,f_n\in\KK [M]$, and let $X$ be a proper toric variety with
  torus $\TT_{M}$  and $\ol D_0$ a nef toric metrized divisor on $X$. Let $\Delta_{0}\subset M_{\RR}$ be the polytope of
  $D_{0}$ and, for $v\in \fM$, let
  $\vartheta_{0,v}\colon \Delta_{i}\to \RR$ be the $v$-adic roof function
  of $\ol D_{0}$. For $i=1,\dots, n$, let $\Delta_{i}\subset M_{\RR}$
  be the Newton polytope of $f_{i}$ and, for $v\in \fM$, let
  $\vartheta_{i,v}\colon \Delta_{i}\to \RR$ be the $v$-adic roof
  function on the metric associated to $f_{i}$.  Then
  \begin{equation*}
    \h_{\ol{D}_0}(Z(f_1,\ldots,f_n))
\leq \sum_{v\in\fM}n_v \MI(\vartheta_{0,v},\ldots,\vartheta_{n,v}).  
  \end{equation*}
\end{theorem}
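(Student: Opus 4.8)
The plan is to deduce Theorem \ref{thm:1} from the preparatory results above, the new input being the toric metrized divisors $\ol D_{f_i}$ constructed in Proposition \ref{prop:10} and Lemma \ref{lemm:3}, combined with Proposition \ref{prop:0cycles}, Corollary \ref{cor:2} and Theorem \ref{thm:heightoftoricvariety}.

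First I would reduce to the situation where $X$ is projective and its defining fan is compatible with the Newton polytopes $\Delta_1,\dots,\Delta_n$. To this end, choose a regular complete fan $\Sigma'$ refining the fan of $X$ and compatible with all the $\Delta_i$, so that $X'=X_{\Sigma'}$ is projective, and let $\pi\colon X'\to X$ be the associated proper birational toric morphism; it restricts to the identity on $\TT_M$. The pullback $\pi^*\ol D_0$ is again a nef toric metrized divisor (the pullback of a nef toric metrized divisor being nef) whose polytope is $\Delta_0$ and whose $v$-adic roof functions are $\vartheta_{0,v}$, because its virtual support function and its $v$-adic metric functions are the same functions on $N_\RR$ as those of $\ol D_0$. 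The $0$-cycle $Z(f_1,\dots,f_n)$ is supported on $\TT_M$, and computing its local heights with respect to the distinguished section $s_{D_0}$ (regular and non-vanishing on $\TT_M$) only involves the restriction of $\|s_{D_0}\|_v$ to $\TT^{\an}_v$, which depends solely on $\psi_{\ol D_0,v}\circ\val_v$; hence $\h_{\ol D_0}(Z(f_1,\dots,f_n))=\h_{\pi^*\ol D_0}(Z(f_1,\dots,f_n))$. So I may and do assume from now on that $X=X_\Sigma$ is projective with $\Sigma$ compatible with the $\Delta_i$'s.

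Next I would set $\ol D_i=\ol D_{f_i}$ for $i=1,\dots,n$ (Definition \ref{def:12}). By Proposition \ref{prop:10} each $\ol D_i$ is semipositive and generated by small sections, its underlying Cartier divisor $D_i$ is the one attached to $\Delta_i=\newton(f_i)$, and its $v$-adic roof function is $\vartheta_{i,v}$. Let $s_i$ be the global section of $\cO(D_i)$ corresponding to $f_i$ via \eqref{eq:5}; it is exactly the section appearing in Proposition \ref{prop:0cycles}, and by Lemma \ref{lemm:3} it is $\ol D_i$-small.

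Finally I would chain the inequalities. The $s_i$ being global sections, $\prod_{i=1}^n\div(s_i)$ is an effective $0$-cycle, supported, like $Z(f_1,\dots,f_n)$, on $\TT_M$; by Proposition~\ref{prop:0cycles}\eqref{item:2} we have $Z(f_1,\dots,f_n)\le\prod_{i=1}^n\div(s_i)$. Because $\ol D_0$ is nef, $\h_{\ol D_0}$ is linear and nonnegative on effective $0$-cycles (Definition \ref{def:9}), hence monotone, so $\h_{\ol D_0}(Z(f_1,\dots,f_n))\le\h_{\ol D_0}\bigl(\prod_{i=1}^n\div(s_i)\bigr)$. Corollary \ref{cor:2}, applicable since $\ol D_0$ is nef, $\ol D_1,\dots,\ol D_n$ are generated by small sections and the $s_i$ are $\ol D_i$-small, gives $\h_{\ol D_0}\bigl(\prod_{i=1}^n\div(s_i)\bigr)\le\h_{\ol D_0,\ol D_1,\dots,\ol D_n}(X)$. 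And Theorem \ref{thm:heightoftoricvariety}, applicable since $\ol D_0$ is in particular semipositive and $\ol D_1,\dots,\ol D_n$ are generated by small sections, identifies the right-hand side with $\sum_{v\in\fM}n_v\,\MI_M(\vartheta_{0,v},\vartheta_{1,v},\dots,\vartheta_{n,v})$. Concatenating these steps yields the claimed bound. I expect the only genuinely subtle point to be the reduction in the second paragraph: one must check that passing to a projective toric refinement changes neither the left-hand side (because the $0$-cycle lives on the torus, where all toric compactifications agree) nor the roof function of $\ol D_0$ (because only the ambient fan is refined, not the support function or the metric); everything else is a direct application of the results already established.
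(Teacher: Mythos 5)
Your proof is correct and follows essentially the same route as the paper's: reduce to a projective toric variety whose fan is compatible with the Newton polytopes, endow the $\Delta_i$'s with the metrized divisors $\ol D_{f_i}$ of Proposition \ref{prop:10}, use Lemma \ref{lemm:3} to see that the $s_i$ are small, and chain Proposition \ref{prop:0cycles}, Corollary \ref{cor:2} and Theorem \ref{thm:heightoftoricvariety}. Your only deviation is that you spell out, more carefully than the paper does, why the refinement of the fan leaves both sides of the inequality unchanged (the $0$-cycle lives in the torus, and the roof functions depend only on the metric functions on $N_\RR$) --- a welcome elaboration, but not a different argument.
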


\begin{proof}
  Let $\Sigma$ be the complete fan corresponding to the proper toric
  variety $X$. By taking a refinement, we can assume without loss of
  generality that $\Sigma$ is regular and compatible with the Newton
  polytopes $\Delta_{i}$, $i=1,\dots, n$.  Hence $X$ is a projective
  toric variety and $\ol{D}_{0}$  a nef toric metrized divisor, and
  there are nef toric Cartier divisors $D_{i}$, $i=1, \dots, n$,
  corresponding to these Newton polytopes.

  For $i=1,\dots, n$, we denote by $\ol{D}_{i}$ the toric metrized
  divisor associated to $f_{i}$ (Definition \ref{def:12}).  By
  Proposition~\ref{prop:10}, each $\ol{D}_{i}$ is semipositive and
  generated by small sections and, by Lemma~\ref{lemm:3}, the global
  sections $s_{i}$ of $\cO(D_{i})$ corresponding to $f_{i}$ are
  $\ol{D}_{i}$-small. Applying Corollary \ref{cor:2} and Theorem
  \ref{thm:heightoftoricvariety},
\begin{displaymath}
\h_{\ol{D}_{0}}\bigg(\prod_{i=1}^{n}\div(s_{i}) \bigg)\le \h_{\ol{D}_{0},\dots, \ol{D}_{n}}(X)
=\sum_{v\in\mathfrak{M}}
n_v\MI_M(\vartheta_{\ol{D}_0,v},\ldots,\vartheta_{\ol{D}_n,v}) .
\end{displaymath}
Due to Proposition \ref{prop:0cycles}\eqref{item:2}, the inequality $Z(f_1,\ldots,f_n)\leq
  \prod_{i=1}^n\div(s_i)$ holds. By the linearity of the global height
  and the nefness of $\ol{D}_{0}$, 
  \begin{displaymath}
\h_{\ol{D}_{0}}(   Z(f_1,\ldots,f_n)) \leq
\h_{\ol{D}_{0}}\bigg(     \prod_{i=1}^n\div(s_i)\bigg), 
  \end{displaymath}
which concludes the proof.  
\end{proof}

\begin{definition}\label{def:l1norm}
  Let $\bfalpha=(\alpha_0,\ldots,\alpha_r)\in(\KK^{\times})^{r}$ with
  $r\ge 1$. For  $v\in\fM$, the \emph{$v$-adic (logarithmic)
    length} of $\bfalpha$ is defined as
\[
\ell_v(\bfalpha)=
\begin{cases}
\log(\sum_{j=0}^{r}\abs{\alpha_j}_v) &\mbox{if $v$ is Archimedean,}\\
\log(\max_{0\leq j\leq r}\abs{\alpha_j}_v) &\mbox{if $v$ is non-Archimedean.}
\end{cases}
\]
The \emph{(logarithmic) length} of $\bfalpha$ is defined as
$\ell(\bfalpha)=\sum_{v \in\fM} n_{v} \ell_v(\bfalpha)$.

For a Laurent polynomial $f \in\KK[M]$, we define its \emph{$v$-adic
  (logarithmic) length}, denoted by $\ell_{v}(f)$, as the $v$-adic length of its
vector of coefficients, $v\in \fM$.  We also define its
\emph{(logarithmic) length}, denoted by $\ell(f)$, as the length of
its vector of coefficients.
\end{definition}

\begin{lemma}\label{lem:height2}
Let $\vartheta_{i}\colon \Delta_{i}\to \RR$ be
concave functions on convex bodies, $i=0,\dots, n$. Then 
\begin{displaymath}
\MI_M(\vartheta_{0},\ldots,\vartheta_{n})\leq
\sum_{i=0}^{n} \Big( \max_{x\in \Delta_{i}} \vartheta_{i}(x)\Big) \MV_M(\Delta_0,\ldots,\Delta_{i-1},\Delta_{i+1},\ldots,\Delta_n)
\end{displaymath}
\end{lemma}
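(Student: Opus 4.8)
The plan is to reduce the estimate to the known monotonicity and polarization properties of the mixed integral, replacing each $\vartheta_i$ by a constant function and tracking the effect on the mixed integral. First I would recall from \cite[\S8]{PhilipponSombra:rBKe} that the mixed integral $\MI_M$ is monotone with respect to the pointwise order on concave functions (with the common convex bodies fixed): if $g_i\le g_i'$ on $\Delta_i$ for all $i$, then $\MI_M(g_0,\dots,g_n)\le \MI_M(g_0',\dots,g_n')$. This follows because each summand $\int_{\Delta_{i_0}+\cdots+\Delta_{i_j}} g_{i_0}\boxplus\cdots\boxplus g_{i_j}\dd\vol_M$ is, after expanding the sup-convolution, monotone, and the alternating-sum structure of $\MI_M$ is exactly that of the mixed volume, so the same inclusion-exclusion argument that proves monotonicity of mixed volumes applies. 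Setting $c_i=\max_{x\in\Delta_i}\vartheta_i(x)$ and letting $\mathbf{1}_{\Delta_i}$ denote the constant function equal to $1$ on $\Delta_i$, we have $\vartheta_i\le c_i\,\mathbf{1}_{\Delta_i}$ pointwise, hence
\[
\MI_M(\vartheta_0,\dots,\vartheta_n)\le \MI_M(c_0\mathbf{1}_{\Delta_0},\dots,c_n\mathbf{1}_{\Delta_n}).
\]

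Next I would compute the right-hand side explicitly. By the additivity (linearity) of $\MI_M$ in each argument with respect to sup-convolution, together with the elementary identity $(c\,\mathbf{1}_{\Delta})\boxplus(c'\,\mathbf{1}_{\Delta'})=(c+c')\mathbf{1}_{\Delta+\Delta'}$ on the Minkowski sum, one gets $\MI_M(c_0\mathbf{1}_{\Delta_0},\dots,c_n\mathbf{1}_{\Delta_n})=\sum_{i=0}^n c_i\,\MI_M(\mathbf{1}_{\Delta_0},\dots,\mathbf{1}_{\Delta_{i-1}},0,\mathbf{1}_{\Delta_{i+1}},\dots,\mathbf{1}_{\Delta_n})$, where the $0$ sits in the $i$-th slot (using that the constant function $0$ is the neutral element: more precisely, writing $c_i\mathbf{1}_{\Delta_i}$ as the sup-convolution $c_i\mathbf{1}_{\{0\}}\boxplus \mathbf{1}_{\Delta_i}$ is not quite right since $\mathbf 1$ is the value-$1$ function, so I would instead argue directly from Definition \ref{def:11}: substituting $g_i=c_i\mathbf 1_{\Delta_i}$ makes $g_{i_0}\boxplus\cdots\boxplus g_{i_j}$ the constant $(c_{i_0}+\cdots+c_{i_j})$ on $\Delta_{i_0}+\cdots+\Delta_{i_j}$, so the corresponding integral equals $(c_{i_0}+\cdots+c_{i_j})\vol_M(\Delta_{i_0}+\cdots+\Delta_{i_j})$). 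Collecting the coefficient of each $c_i$ in the alternating double sum of Definition \ref{def:11} reproduces exactly the alternating sum defining $\MV_M(\Delta_0,\dots,\Delta_{i-1},\Delta_{i+1},\dots,\Delta_n)$ — this is the standard computation that $\MI_M$ of constant functions is a linear combination of mixed volumes, and it already appears in \cite[\S8]{PhilipponSombra:rBKe}. This yields
\[
\MI_M(c_0\mathbf 1_{\Delta_0},\dots,c_n\mathbf 1_{\Delta_n})=\sum_{i=0}^n c_i\,\MV_M(\Delta_0,\dots,\Delta_{i-1},\Delta_{i+1},\dots,\Delta_n),
\]
which is precisely the asserted bound.

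The main obstacle, and the step I would be most careful about, is the monotonicity of the mixed integral: unlike the mixed volume of convex bodies, $\MI_M$ is defined by an alternating sum whose terms are integrals of sup-convolutions, so monotonicity is not completely formal and must be extracted from the properties catalogued in \cite[\S8]{PhilipponSombra:rBKe} (in particular from the fact that $\MI_M$ is the polarization of $g\mapsto (n+1)!\int_\Delta g\dd\vol_M$ and is nonnegative on tuples of nonnegative functions, applied after the translation $\vartheta_i\mapsto c_i\mathbf 1_{\Delta_i}-\vartheta_i\ge 0$ and linearity). Once monotonicity is in hand, the remaining combinatorial identification of the coefficients with mixed volumes is routine bookkeeping on the inclusion-exclusion formula, so I would state it briefly and cite \cite[\S8]{PhilipponSombra:rBKe} rather than reproduce it.
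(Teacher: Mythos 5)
Your proof takes the same route as the paper: first invoke monotonicity of the mixed integral from \cite[\S 8]{PhilipponSombra:rBKe} to replace each $\vartheta_i$ by the constant $c_i=\max_{\Delta_i}\vartheta_i$, then compute $\MI_M$ of constant functions and identify the result with the stated sum of mixed volumes, which the paper simply quotes as formula (8.3) of that reference. One small caveat on a non-essential aside: your proposed derivation of monotonicity from nonnegativity plus ``linearity'' after the substitution $\vartheta_i\mapsto c_i-\vartheta_i$ does not quite work, since $c_i-\vartheta_i$ is convex rather than concave and the multilinearity of $\MI_M$ is with respect to sup-convolution, not pointwise subtraction; but you correctly fall back on citing the monotonicity statement (\cite[Proposition 8.1]{PhilipponSombra:rBKe}), which is exactly what the paper does.
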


\begin{proof}
  Set $c_i=\max_{x\in \Delta_{i}} \vartheta_{i}(x)$ for short. By the
  monotonicity of the mixed integral \cite[Proposition
  8.1]{PhilipponSombra:rBKe}
\begin{equation*}
\MI_M(\vartheta_{0},\ldots,\vartheta_{n})\leq\MI_M(c_0|_{\Delta_{0}},\ldots,c_n|_{\Delta_{n}}), 
\end{equation*}
where  $c_i|_{\Delta_{i}}$ denotes the constant function $c_{i}$ on
the convex body $\Delta_{i}$. By \cite[formula~(8.3)]{PhilipponSombra:rBKe}
\begin{equation*}
\MI_M(c_0|_{\Delta_{0}},\ldots,c_n|_{\Delta_{n}})
= \sum_{i=0}^{n} c_{i}\MV_M(\Delta_0,\ldots,\Delta_{i-1},\Delta_{i+1},\ldots,\Delta_n),
\end{equation*}
giving the stated inequality.
\end{proof}

\begin{corollary}\label{cor:height2}
With notation as in Theorem \ref{thm:4}, 
\begin{multline*}
\h_{\ol{D}_0}(Z(f_1,\ldots,f_n))
\leq
\bigg(\sum_{v\in\fM}n_{v} \max_{x\in
  \Delta_{0}}\vartheta_{0,v}(x)\bigg)\MV_{M}(\Delta_1,\ldots,\Delta_n)\\[-3mm]
+ \sum_{i=1}^n\ell(f_i)\MV_{M}(\Delta_0,\ldots,\Delta_{i-1},\Delta_{i+1},\ldots,\Delta_n).
\end{multline*}
In particular, for the canonical metric on $D_{0}$ (Example \ref{exm:6}),
\begin{equation}
\label{eq:21}
\h_{\ol{D}_0^{\mathrm{can}}}(Z(f_1,\ldots,f_n))
\leq
\sum_{i=1}^n\ell(f_i)\MV_{M}(\Delta_0,\ldots,\Delta_{i-1},\Delta_{i+1},\ldots,\Delta_n)\mbox{.}
\end{equation}
\end{corollary}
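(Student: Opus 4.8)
The plan is to deduce this from Theorem \ref{thm:4} together with the bound of Lemma \ref{lem:height2}, once we identify, for $i=1,\dots,n$, the maximum of the $v$-adic roof function $\vartheta_{i,v}$ of $f_i$ with its $v$-adic length $\ell_v(f_i)$.

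\emph{First step: the identity $\max_{x\in\Delta_i}\vartheta_{i,v}(x)=\ell_v(f_i)$.} By construction $\vartheta_{i,v}$ is the $v$-adic roof function of the toric metrized divisor $\ol D_{f_i}$ of Definition \ref{def:12}, hence the Legendre--Fenchel dual of the $v$-adic metric function $\psi_{i,v}=\psi_{\ol D_{f_i},v}$. Since $\psi_{i,v}$ is concave and continuous, biduality gives $\psi_{i,v}=\vartheta_{i,v}^{\vee}$, so evaluating at the origin yields $\psi_{i,v}(\bfzero)=\inf_{x}(-\vartheta_{i,v}(x))=-\max_{x\in\Delta_i}\vartheta_{i,v}(x)$, the maximum being attained because $\Delta_i$ is compact. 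Now read $\psi_{i,v}(\bfzero)$ off formula \eqref{eq:50} of Proposition \ref{prop:10}: in the Archimedean case it equals $-\log\big(\sum_j\abs{\alpha_{i,j}}_v\big)$, and in the non-Archimedean case $\min_j\big(-\log\abs{\alpha_{i,j}}_v\big)=-\max_j\log\abs{\alpha_{i,j}}_v$. In both cases $-\psi_{i,v}(\bfzero)$ is exactly $\ell_v(f_i)$ in the sense of Definition \ref{def:l1norm}. (Alternatively one may maximize \eqref{eq:49} directly; in the Archimedean case this is the Gibbs variational problem, whose optimum is $\log\sum_j\abs{\alpha_{i,j}}_v$, attained at $\lambda_j=\abs{\alpha_{i,j}}_v/\sum_k\abs{\alpha_{i,k}}_v$.)

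\emph{Second step: assembling the bound.} Fix $v\in\fM$ and apply Lemma \ref{lem:height2} to the concave functions $\vartheta_{0,v},\dots,\vartheta_{n,v}$ on $\Delta_0,\dots,\Delta_n$; substituting the values from the first step for the indices $i\ge1$ gives
\[
\MI_M(\vartheta_{0,v},\dots,\vartheta_{n,v})\le\Big(\max_{x\in\Delta_0}\vartheta_{0,v}(x)\Big)\MV_M(\Delta_1,\dots,\Delta_n)+\sum_{i=1}^{n}\ell_v(f_i)\,\MV_M(\Delta_0,\dots,\Delta_{i-1},\Delta_{i+1},\dots,\Delta_n).
\]
Since $\ol D_0$ is a toric metrized divisor and, as is visible from \eqref{eq:49}, each $\ol D_{f_i}$ is canonical at all but finitely many places, all the roof functions $\vartheta_{i,v}$ ($i=0,\dots,n$) vanish identically for all but finitely many $v$, so both sides above are zero for all but finitely many $v$. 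Multiplying by $n_v$, summing over $v\in\fM$, and using $\sum_{v\in\fM}n_v\ell_v(f_i)=\ell(f_i)$ from Definition \ref{def:l1norm}, we obtain
\[
\sum_{v\in\fM}n_v\MI_M(\vartheta_{0,v},\dots,\vartheta_{n,v})\le\Big(\sum_{v\in\fM}n_v\max_{x\in\Delta_0}\vartheta_{0,v}(x)\Big)\MV_M(\Delta_1,\dots,\Delta_n)+\sum_{i=1}^{n}\ell(f_i)\,\MV_M(\Delta_0,\dots,\Delta_{i-1},\Delta_{i+1},\dots,\Delta_n).
\]
Combining with Theorem \ref{thm:4} gives the first displayed inequality of the corollary.

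\emph{Third step: the canonical case.} By the correspondence recalled in \S\ref{sec:toric-varieties}, the canonical $v$-adic toric metric of $D_0$ has roof function equal to the zero function on $\Delta_0$ for every $v$, so $\max_{x\in\Delta_0}\vartheta_{0,v}(x)=0$ for all $v$ and the first summand on the right-hand side drops out, yielding \eqref{eq:21}. The only part of this argument that is not pure bookkeeping is the first step, and there the sole point needing a short computation is the Archimedean (entropy) case; everything else is an application of the already-established Theorem \ref{thm:4} and Lemma \ref{lem:height2}.
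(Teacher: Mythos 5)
Your proof is correct and follows essentially the same route as the paper's: establish $\max_{x\in\Delta_i}\vartheta_{i,v}(x)=\ell_v(f_i)$ by evaluating $\psi_{i,v}$ at the origin (the paper's eq.~\eqref{eq:54}), then apply Lemma~\ref{lem:height2} and Theorem~\ref{thm:4}, and for the canonical case note the roof functions of $\ol{D}_0^{\can}$ vanish. You added the explicit biduality justification and the finiteness bookkeeping, which the paper leaves implicit, but the argument is substantively identical.
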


\begin{proof}
For  $1\le i\le n$ and $v\in \fM$, let $\vartheta_{i,v}$ be the $v$-adic roof
function of the toric semipositive metric associated to $f_{i}$. Using
\eqref{eq:49}, we compute the value of
$\psi_{i,v}(0)=-\vartheta_{i,v}^{\vee}(0)$ to obtain that
\begin{equation}
  \label{eq:54}
  \max_{x\in \Delta_{i}} \vartheta_{i,v}(x) = \ell_{v}(f_{i}).
\end{equation}

The first statement follows then from Theorem \ref{thm:4} and Lemma
\ref{lem:height2}.  The second statement is a particular case of the
first one, using the fact that the $v$-adic roof functions of
$\ol{D}^{\can}_{0}$ are the zero functions on $\Delta_{0}$.
\end{proof}

We readily derive from the previous corollary the following version of
the arithmetic B\'ezout theorem.
 
\begin{corollary}
\label{cor:3}
Let $f_{1},\dots, f_{n}\in \KK[x_{1},\dots, x_{n}]$ and let
$\ol{D}^{\can}$ be the divisor at infinity of $\PP^{n}_{\KK}$
equipped with the canonical metric. Then
\begin{displaymath}
\h_{\ol{D}^{\can}}(Z(f_1,\ldots,f_n)) \le
\sum_{i=1}^{n}\Big(\prod_{j\ne i}\deg(f_{j}) \Big) \ell(f_{i}),
\end{displaymath}
where $\deg(f_{i})$ denotes the total degree of the polynomial
$f_{i}$. 
\end{corollary}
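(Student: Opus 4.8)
The plan is to deduce this from the inequality \eqref{eq:21} of Corollary~\ref{cor:height2}, taking for the toric compactification of the torus $\TT_{M}\simeq\GmK^{n}$ the projective space $X=\PP^{n}_{\KK}$, and for the nef toric metrized divisor the canonical one $\ol{D}^{\can}$ on the divisor $E$ of the hyperplane at infinity. First I would recall that the toric Cartier divisor $E$ has associated polytope the standard simplex $\Delta^{n}$ of $\RR^{n}$ (see \cite[page~27]{Ful93}, or Proposition~\ref{prop:9} with $r=n$), and that the $v$-adic roof function of its canonical metric is the zero function on $\Delta^{n}$ for every $v\in\fM$ (Example~\ref{exm:6}). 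Since $E$ is ample, its canonical metric is semipositive, and $\h_{\ol{D}^{\can}}$ is the Weil height, which is nonnegative on closed points of $\PP^{n}_{\KK}$, the metrized divisor $\ol{D}^{\can}$ is nef in the sense of Definition~\ref{def:9}. Hence \eqref{eq:21} applies and gives, writing $\Delta_{i}=\newton(f_{i})$ for $i=1,\dots,n$,
\[
\h_{\ol{D}^{\can}}(Z(f_{1},\dots,f_{n}))\le\sum_{i=1}^{n}\ell(f_{i})\,\MV_{M}(\Delta^{n},\Delta_{1},\dots,\Delta_{i-1},\Delta_{i+1},\dots,\Delta_{n}).
\]

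The next step is to bound each of these mixed volumes by $\prod_{j\ne i}\deg(f_{j})$. Since $f_{i}\in\KK[x_{1},\dots,x_{n}]$ has total degree $\deg(f_{i})$, every exponent of a nonzero monomial of $f_{i}$ lies in $\deg(f_{i})\cdot\Delta^{n}$, and so by convexity $\Delta_{i}\subseteq\deg(f_{i})\,\Delta^{n}$. By the monotonicity of the mixed volume under inclusion of convex bodies, followed by its multilinearity with respect to Minkowski sums and dilations, the mixed volume above is at most $\bigl(\prod_{j\ne i}\deg(f_{j})\bigr)\,\MV_{M}(\Delta^{n},\dots,\Delta^{n})$, where there are $n$ copies of $\Delta^{n}$. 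Finally, $\MV_{M}(\Delta^{n},\dots,\Delta^{n})=n!\,\vol_{M}(\Delta^{n})=1$, because $\vol_{M}(\Delta^{n})=1/n!$.

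It remains to observe that $\ell(f_{i})\ge 0$ for every nonzero $f_{i}$: for any coefficient $\alpha$ of $f_{i}$ and any $v\in\fM$ one has $\ell_{v}(f_{i})\ge\log|\alpha|_{v}$ (the sum, resp.\ the maximum, defining $\ell_{v}(f_{i})$ dominates each term), and summing over $v$ with the weights $n_{v}$ and invoking the product formula yields $\ell(f_{i})\ge\sum_{v\in\fM}n_{v}\log|\alpha|_{v}=0$. Multiplying the bound on the mixed volume by $\ell(f_{i})\ge 0$ and summing over $i=1,\dots,n$ then gives the claimed inequality. There is no genuine obstacle here: the statement is a direct specialization of Corollary~\ref{cor:height2}, and the only points requiring a little care are the identification of the polytope of $E$ with the standard simplex, the monotonicity and multilinearity of the mixed volume, and the sign of $\ell(f_{i})$.
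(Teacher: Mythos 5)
Your proof is correct and follows exactly the route the paper intends: the paper gives no explicit proof (it says the corollary is ``readily derived'' from Corollary~\ref{cor:height2}), and your argument is precisely that specialization, with $\Delta_0=\Delta^n$ and the mixed volumes bounded via $\Delta_i\subseteq\deg(f_i)\,\Delta^n$ together with monotonicity, multilinearity, and $n!\vol_M(\Delta^n)=1$. Your observation that $\ell(f_i)\ge 0$ (via the product formula), which is needed to preserve the direction of the inequality when replacing the mixed volumes by the larger quantities $\prod_{j\ne i}\deg(f_j)$, is a detail the paper leaves implicit but is indeed required.
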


\section{Comparisons, examples and applications} \label{sec:examples}

In this section, we first compare our main results (Theorem
\ref{thm:4} and Corollary~\ref{cor:height2}) with the previous
ones. Next, we compute the bounds given by these results in two
families of examples, and compare them with the actual height of the
$0$-cycles.  The first family of examples illustrates a case in which
these bounds do approach the height of the 0-cycle, while the second
one shows a situation where the bound of Theorem \ref{thm:4} is sharp
and that of Corollary \ref{cor:height2} is not. Finally, we present an
application bounding the height of the resultant of a 0-cycle defined
by a system of Laurent polynomials.

The first arithmetic analogue of the BKK theorem was proposed by
Maillot \cite[Corollaire 8.2.3]{Maillot:GAdvt}. With notations as in
Theorem \ref{thm:4}, suppose that $f_{1},\dots, f_{n}\in \ZZ[M]$ and
that $D_{0}$ is the nef toric divisor corresponding to the polytope
$\Delta_{0}=\sum_{i=1}^{n}\Delta_{i}$. Then Maillot's result amounts
to the upper bound
\begin{equation}
  \label{eq:8}
  \h_{\ol{D}_0^{\mathrm{can}}}(Z(f_1,\ldots,f_n))
  \leq
  \sum_{i=1}^n(\m(f_i)+L(\Delta_{i}))\MV_{M}(\Delta_0,\ldots,\Delta_{i-1},\Delta_{i+1},\ldots,\Delta_n),
\end{equation}
where $\m(f_{i})$ denotes the logarithmic Mahler measure of $f_{i}$,
and $L(\Delta_{i})$ a constant associated to the polytope
$\Delta_{i}$.

This result is similar to Corollary \ref{cor:height2} specialized to
a system of Laurent polynomials with integer coefficients, and the
toric divisor $D_{0}$ associated to the polytope given by the Minkowski sum 
 $\sum_{i=1}^{n}\Delta_{i}$, equipped with the canonical metric. The factors
$\m(f_i)+L(\Delta_{i})$ in \eqref{eq:8} and $\ell(f_{i})$ in
\eqref{eq:21} are comparable, albeit the fact that the constant
$L(\Delta_{i})$ is not effective, see \cite[Remark 4.2]{Sombra:msvtp}
for a discussion on this point. 

Another previous result in this direction was obtained by the second
author \cite[Th\'eor\`eme 0.3]{Sombra:msvtp}. Using again the notation
in Theorem \ref{thm:4}, suppose that $f_{1},\dots, f_{n}\in \ZZ[M]$
and that the polytope $\Delta_{0}$ associated to the nef toric divisor
$D_{0}$ contains $\Delta_{i}$, $i=1,\dots, n$. Then
\begin{equation*}
  \h_{\ol{D}_0^{\mathrm{can}}}(Z(f_1,\ldots,f_n))
  \leq n! \vol_{M}(\Delta_{0})   \sum_{i=1}^n \ell(f_i).
\end{equation*}
This result  is equivalent to the specialization of  the upper bound
in \eqref{eq:21} to 
a system of Laurent polynomials with integer coefficients and Newton
polytopes contained in the polytope $\Delta_{0}$. 

We next turn to the computation of the bounds given by Theorem
\ref{thm:4} and Corollary~\ref{cor:height2}  in two families of
examples. 

We keep the notation of \S\ref{sec:theorems}. We need the the
following auxiliary computation of mixed volumes. For its proof, we
recall that the mixed volume of a family of polytopes
$\Delta_{i}\subset \RR^{n}$, $i=1,\dots, n$, can be decomposed in
terms of mixed volumes of their lower dimensional faces as
\begin{equation}
  \label{eq:13}
  \MV_{n}(\Delta_{1},\dots, \Delta_{n})= -\hspace{-2mm}\sum_{u\in
    S^{n-1}}\Psi_{\Delta_{1}}(u) \MV_{n-1}(\Delta_{2}^{u},\dots,
  \Delta_{n}^{u}), 
\end{equation}
where $S^{n-1}$ is the unit sphere of $\RR^{n}$, $\Psi_{\Delta_{1}}$
is the support function of $\Delta_{1}$ as in \eqref{eq:12},
$\Delta_{i}^{u}$ is the unique face of $\Delta_{i}$ that minimizes the
functional $u$ on this polytope, and $\MV_{n}$ and $\MV_{n-1}$ denote
the mixed volume functions associated to the Lebesgue measure of
$\RR^{n}$ and $u^{\bot}\simeq\RR^{n-1}$, respectively. In fact, the sum ranges
through the normal vectors of the facets of each polytope. We refer to  \cite[formula
(5.1.22)]{Schneider:cbbmt} for more details.

\begin{lemma}\label{lem:examples}
  Let $\Delta\subset M_{\RR}$ be a lattice polytope, and $m_i\in M$,
  $i=2,\dots, n$, linearly independent lattice points.
  Denote by $\ol{0\,m_i}$ the segment between $0$ and $m_i$, and 
  $u\in N$  the smallest lattice point orthogonal to all the
  $m_{i}$'s,  which
  is unique up to a sign. Let  $P=
  \sum_{i=2}^{n}\ZZ m_{i}\subset M$ be the sublattice generated by the
  $m_{i}$'s, and  $P^{\mathrm{sat}}$  its saturation. Then
\[
\MV_M(\Delta,\ol{0\,m_2},\ldots,\ol{0\,m_{n}})=[{P}^{\mathrm{sat}}:P]\vol_{\ZZ}\langle \Delta,u\rangle,
\]
where $\langle \Delta,u\rangle $ is the image of $\Delta$ under the
functional $u\colon M_{\RR}\to \RR$.
\end{lemma}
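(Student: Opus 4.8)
The plan is to feed the family $\Delta_{1}=\Delta$ and $\Delta_{i}=\ol{0\,m_{i}}$ for $i=2,\dots,n$ into the mixed volume decomposition \eqref{eq:13}, in its lattice-normalized form. Writing $w$ for the summation variable to avoid a clash with the vector $u$ of the statement, this reads
\[
\MV_{M}(\Delta,\ol{0\,m_{2}},\dots,\ol{0\,m_{n}})
=-\sum_{w}\Psi_{\Delta}(w)\,\MV_{M\cap w^{\bot}}\bigl(\ol{0\,m_{2}}^{w},\dots,\ol{0\,m_{n}}^{w}\bigr),
\]
the sum being over the primitive $w\in N$ (with only finitely many nonzero terms), $\Psi_{\Delta}(w)=\min_{x\in\Delta}\langle x,w\rangle$, $\Delta_{i}^{w}$ the face of $\Delta_{i}$ on which $w$ is minimized, and $\MV_{M\cap w^{\bot}}$ the mixed volume in the hyperplane $w^{\bot}\subset M_{\RR}$ normalized so that $M\cap w^{\bot}$ has covolume $1$.

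First I would pin down which summands survive. For a segment $\ol{0\,m_{i}}$ the face $\ol{0\,m_{i}}^{w}$ is the whole segment when $\langle m_{i},w\rangle=0$ and a single vertex otherwise; since the mixed volume of $n-1$ convex bodies in an $(n-1)$-dimensional space vanishes as soon as one of them is a point, a term can be nonzero only when $w$ is orthogonal to every $m_{i}$. By hypothesis the $m_{i}$ span a hyperplane with primitive normal $u$, so the only such primitive $w$ are $\pm u$, and for both of them $\ol{0\,m_{i}}^{w}=\ol{0\,m_{i}}$ for all $i$. Hence
\[
\MV_{M}(\Delta,\ol{0\,m_{2}},\dots,\ol{0\,m_{n}})
=-\bigl(\Psi_{\Delta}(u)+\Psi_{\Delta}(-u)\bigr)\,\MV_{M\cap u^{\bot}}\bigl(\ol{0\,m_{2}},\dots,\ol{0\,m_{n}}\bigr).
\]

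Next I would evaluate the two factors. Since $\Psi_{\Delta}(u)=\min_{x\in\Delta}\langle x,u\rangle$ and $\Psi_{\Delta}(-u)=-\max_{x\in\Delta}\langle x,u\rangle$, the quantity $-(\Psi_{\Delta}(u)+\Psi_{\Delta}(-u))$ is the length of the interval $\langle\Delta,u\rangle\subset\RR$, and as $u$ is primitive in $N$ the map $u\colon M\to\ZZ$ is surjective, so this length equals $\vol_{\ZZ}\langle\Delta,u\rangle$. Setting $L=M\cap u^{\bot}$, the segments $\ol{0\,m_{i}}$ lie in $L\otimes\RR$, and the mixed volume of $n-1$ segments in $L$ equals the lattice determinant $\lvert\det_{L}(m_{2},\dots,m_{n})\rvert$, which by the linear independence of the $m_{i}$ is the index $[L:P]$ with $P=\sum_{i=2}^{n}\ZZ m_{i}$; finally $L=P^{\mathrm{sat}}$, because the saturation of $P$ in $M$ is $M\cap\sum_{i=2}^{n}\RR m_{i}$ and $\sum_{i=2}^{n}\RR m_{i}=u^{\bot}$. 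Combining the two evaluations gives the stated identity; when $\langle\Delta,u\rangle$ degenerates to a point both sides are $0$, so that case is covered too.

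I expect the only delicate point to be the normalization bookkeeping in passing from \eqref{eq:13} as literally stated — with unit normals on $S^{n-1}$ and Lebesgue measures on $\RR^{n}$ and on $w^{\bot}$ — to the lattice-normalized form used above. Writing the two surviving unit normals as $\pm u/\lvert u\rvert$, the homogeneity factor $\Psi_{\Delta}(u/\lvert u\rvert)=\Psi_{\Delta}(u)/\lvert u\rvert$ is exactly cancelled by $\operatorname{covol}(M\cap u^{\bot})=\lvert u\rvert$ (valid because $u$ is primitive), which relates the Euclidean $(n-1)$-dimensional mixed volume to the one normalized by $L$. The remaining ingredients — vanishing of a mixed volume with a point entry, and the determinant formula for the mixed volume of segments — are classical, see e.g. \cite[\S5.1]{Schneider:cbbmt}.
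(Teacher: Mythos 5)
Your proof is correct and follows essentially the same route as the paper: both apply the decomposition \eqref{eq:13} to $(\Delta,\ol{0\,m_{2}},\dots,\ol{0\,m_{n}})$, observe that only $\pm u$ contribute, evaluate the contribution of $\Delta$ as the lattice length $\vol_{\ZZ}\langle\Delta,u\rangle$, and account for the normalization by the identity $\operatorname{covol}(M\cap u^{\bot})=\|u\|$ (which the paper cites as the Brill--Gordan duality theorem). The only cosmetic difference is that you rewrite \eqref{eq:13} in lattice-normalized form from the start and fold the segment determinant into $[P^{\mathrm{sat}}:P]$ directly, whereas the paper carries the $1/\|u\|$ factor explicitly through \eqref{eq:33}--\eqref{eq:32}.
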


\begin{proof}
  Choosing a basis, we identify $M=\ZZ^{n}$.  With this
  identification, $\MV_{M}=\MV_{n}$,  the mixed volume associated
  to the Lebesgue measure of $\RR^{n}$. The formula in~\eqref{eq:13}
  applied to the polytopes $\Delta,\ol{0\,m_2},\ldots,\ol{0\,m_{n}}$
  implies that
\begin{align}
\label{eq:33}
\nonumber
  \MV_{n}(\Delta,\ol{0\,m_2},\ldots,\ol{0\,m_{n}})=&
  -\Big(\Psi_{\Delta}\Big(\frac{u}{\|u\|}\Big)+\Psi_{\Delta}\Big(-\frac{u}{\|u\|}\Big)\Big)
                                                     \MV_{n-1}(\ol{0\,m_2},\ldots,\ol{0\,m_{n}})\\
=&
  -\frac{1}{\|u\|} (\Psi_{\Delta}({u})+\Psi_{\Delta}(-{u})) \MV_{n-1}(\ol{0\,m_2},\ldots,\ol{0\,m_{n}}),
\end{align}
where $\|u\|$ is the Euclidean norm. 
We have that 
\begin{equation}
  \label{eq:31}
  \Psi_{\Delta}({u})+\Psi_{\Delta}(-{u})= \min_{x\in \Delta}\langle
  x,u\rangle + \min_{x\in \Delta}\langle x,-u\rangle = -\vol_{\ZZ}\langle
  \Delta,u\rangle 
\end{equation}
By the Brill-Gordan duality theorem  \cite[Lemma
1]{Heath-Brown:dasn}, we have the equality
$\|u\|= \vol_{n-1}(P_{\RR}/P^{\mathrm{sat}})$, where $\vol_{n-1}$
denotes the Le\-bes\-gue measure of $u^{\bot}$. Hence
\begin{equation}
  \label{eq:32}
\frac{1}{\|u\|} \MV_{n-1}(\ol{0\,m_2},\ldots,\ol{0\,m_{n}})= \MV_{P^{\mathrm{sat}}}(\ol{0\,m_2},\ldots,\ol{0\,m_{n}})= 
[{P}^{\mathrm{sat}}:P].
\end{equation}
The result follows then from \eqref{eq:33}, \eqref{eq:31} and
\eqref{eq:32}. 
\end{proof}

\begin{example} \label{exm:2}
Let $d,\alpha\geq 1$ be integers and consider the system of Laurent
polynomials given by
\[
f_1=x_1-\alpha, \quad
f_2=x_2-\alpha x_1^d, \quad 
\dots , \quad f_n=x_n-\alpha x_{n-1}^d  \quad\in \QQ[x_{1}^{\pm1}, \dots, x_{n}^{\pm1}].
\]
Its zero set in $\TT_{\ZZ^{n}}=\GG_{\mathrm{m},\QQ}^{n}$ consists of
the  rational point
\[
p=(\alpha ,\alpha ^{d+1},\ldots,\alpha ^{d^{n-1}+d^{n-2}+\cdots+1})
\in \TT_{\ZZ^{n}}({\QQ})= ({\QQ}^{\times})^{n}.
\]
Let $X$ be a proper toric variety over $\QQ$, and
$\ol{D}^{\mathrm{can}}_0$  a nef toric Cartier divisor on $X$ equipped with
the canonical metric. 
Let  $\Delta_0\subset \RR^{n}$ be the polytope corresponding to~$D_{0}$ and, for $i=1,\ldots,n$, set 
\begin{displaymath}
  u_i=e_{i}+de_{i+1}+\dots+d^{n-i}e_{n}\in \ZZ^{n},
\end{displaymath}
where the $e_{j}$'s are the vectors in the standard basis of
$\ZZ^{n}$. The height of $p$ with respect to $\ol{D}_{0}^{\can}$ is 
\begin{equation}\label{eq:ex1-heightp}
\h_{\ol{D}^{\mathrm{can}}_0}( p)=\Big(\vol_\ZZ\Big\langle
\Delta_0,\sum_{i=1}^n u_i\Big\rangle\Big) \log(\alpha) .
\end{equation}
To prove this, let  $v\in\fM_\QQ$.   By \eqref{eq:35}, the local height of $ p$ with respect to the pair
$(\ol{D}_{0}^{\can},s_{D_{0}})$ is given by
\begin{displaymath}
\h_{\ol{D}^{\mathrm{can}}_0\hspace{-2mm},v}(p,s_{D_{0}})=-\log\|s_{D_{0}}(p)\|_{v,\can}=-\Psi_{\Delta_0}\big(\val_v(p)\big).
\end{displaymath}
Set $u=\sum_{i=1}^{n}u_{i}$ for short. Since $\val_v( p)=-\log\abs{\alpha
}_v\, u $, 
\begin{equation*}
-\Psi_{\Delta_0}\big(\val_v(p)\big)=
\begin{cases}
\log\abs{\alpha }_v\max\limits_{m\in\Delta_0\cap \ZZ^n}\langle m,u\rangle &\text{ if } v=\infty,\\[2mm]
\log\abs{\alpha }_v\min\limits_{m\in\Delta_0\cap \ZZ^n}\langle m, u\rangle &\text{ if } v\ne \infty.
\end{cases}
\end{equation*}
By adding these contributions, 
\begin{displaymath}
  \h_{\ol{D}^{\mathrm{can}}_0}( p)=  \log({\alpha }) \Big(
  \max\limits_{m\in\Delta_0\cap \ZZ^n}\langle m,u\rangle  -  \min\limits_{m\in\Delta_0\cap \ZZ^n}\langle m,u\rangle \Big),
\end{displaymath}
which gives the formula in \eqref{eq:ex1-heightp}.

Next we compare the value of the height of $ p$ with the bounds given
by Corollary~\ref{cor:height2}.  We have $\ell(f_i)=\log(\alpha +1)$
for all~$i$. Consider the dual basis of the $u_{i}$'s, given by
\begin{displaymath}
  m_1=e_1, m_2=e_2-de_{1}, \dots, m_{n}=e_{n}-de_{n-1}\in \ZZ^{n}.
\end{displaymath}
For $i=1,\dots, n$, the Newton polytope $\Delta_i$ of $f_{i}$ is a translate
of the segment~$\ol{0\,m_i}$, and $u_i$ is the smallest lattice point
in the line $(\sum_{j\neq i}\RR m_j)^\perp$.  Moreover the sublattice
$\sum_{j\neq i}\ZZ m_i$ is saturated. By Lemma~\ref{lem:examples}
\[
\MV_{\ZZ^n}(\Delta_0,\ldots,\Delta_{i-1},\Delta_{i+1},\ldots,\Delta_n)
=\vol_\ZZ\langle \Delta_0,u_i\rangle.
\]
Therefore, the bound given by Corollary~\ref{cor:height2} is
\begin{equation*}
\h_{\ol{D}_0^{\mathrm{can}}}( p)\leq 
\Big(\sum_{i=1}^n\vol_\ZZ\langle\Delta_0,u_i\rangle\Big) \log(\alpha+1).
\end{equation*}
Example \ref{exm:1} in the introduction consists of the particular
cases corresponding to the polytopes $\Delta_{0}=\Delta^{n}$, the
standard simplex of $\RR^{n}$, and
$\Delta_{0}= \conv(0, m_{1},\dots, m_{n})$.
\end{example}

In the following example, we exhibit a situation where the difference
between the bounds given by the results in \S\ref{sec:theorems} is
noticeable.  Recall that passing from Theorem~\ref{thm:4} to
Corollary~\ref{cor:height2} amounts to replacing the local roof
functions by constant functions on the polytope bounding them from
above.  Hence, to maximize the discrepancy between these two concave
functions, we look for local roof functions that are tent-shaped,
which is the situation where the difference between the mean value and
the maximum value of these functions is the greatest possible.

\begin{example}
Let $\alpha  \geq 1$ be an integer and consider the system of Laurent polynomials
\[
f_i=x_{i}-\alpha  \in \QQ[x_{1}^{\pm1}, \dots, x_{n}^{\pm1}], \quad
i=1,\dots, n,
\]
Its zero set in $\GG_{\mathrm{m},\QQ}^{n}$ is the rational point
$p=(\alpha ,\ldots,\alpha)\in (\QQ^{\times})^{n}$.  Let
$X=\PP^n_{\QQ}$ and let $\ol{E}^{\mathrm{can}}$ be the
divisor of the hyperplane at infinity equipped with the canonical
metric.  Then the height of $p$ with respect to
$\ol{E}^{\mathrm{can}}$ is
\begin{equation*}
\h_{\ol{E}^{\mathrm{can}}}(p)=\log (\alpha).
\end{equation*}

Next we compare the value of this height with the bound given by
Theorem \ref{thm:4}. Since the explicit computation of the mixed
integrals appearing in this bound is somewhat involved, instead of
giving its exact value we are going to approximate them with an upper
bound that is easier to compute.

The polytope associated to the toric Cartier divisor $E$ is
$\Delta_{0}= \Delta^{n}$, the standard simplex of $\RR^{n}$. For each
$v\in \fM_{\QQ}$, the $v$-adic roof function $\vartheta_{0,v}$ of
$\ol{E}^{\can}$ is the zero function on this simplex.

For each $i=1,\dots, n$, let $\Delta_{i}=\newton(f_{i})\subset\RR^{n}$
be the Newton polytope of $f_{i}$, which coincides with the segment
$\ol{0\, e_{i}}$.  For $v\in \fM_{\QQ}$, let $\vartheta_{i,v}$ be the
$v$-adic roof function associated to $f_{i}$ (Definition
\ref{def:12}).  This function is given, for
$t\, e_{i}\in \Delta_{i}=\ol{0\, e_{i}}$, by
\begin{align*}
\vartheta_{i,\infty}(t\, e_{i}) = 
  \begin{cases}
(1-t)\log(\alpha)      -t\log t -(1-t)\log(1-t)& \text{ if }
     v=\infty,\\
(1-t)\log\abs{\alpha}_v & \text{ if }
     v\ne \infty. 
  \end{cases}
\end{align*}
For the Archimedean place, the $v$-adic roof functions are
nonnegative, and so their mixed integral can be expressed as a mixed
volume
\begin{equation}
  \label{eq:22}
\MI_{\ZZ^{n}}(\vartheta_{0,\infty},\ldots,\vartheta_{n,\infty})
=\MV_{\ZZ^{n+1}}(\wt{\Delta}_0,\ldots,\wt{\Delta}_n),
\end{equation}
with $\widetilde{\Delta}_i=\conv\big(
\graph(\vartheta_{i,\infty}),\Delta_i\times\lbrace 0\rbrace)
\subset\RR^{n}\times\RR$. 
Consider the concave function 
 $\vartheta:\Delta^{n}\rightarrow\RR$  defined by
\[
\bfx=(x_{1},\dots, x_{n})\longmapsto\log (2)+\log( \alpha) \Big(1-\sum_{i=1}^n x_i\Big),
\]
and set
$\wt{\Delta}=\conv\big( \graph(\vartheta),\Delta^{n}\times\lbrace
0\rbrace\big)\subset\RR^{n}\times \RR$.
Notice that $\vartheta_{i,\infty}\leq\vartheta$ on $\Delta_{i}$, and so
 $\wt{\Delta}_i\subset\wt{\Delta}$, $i=0,\dots, n$. By the
monotony of the mixed volume,
\begin{multline} \label{eq:28}
\MV_{\ZZ^{n+1}}(\widetilde{\Delta}_0,\ldots,\widetilde{\Delta}_n)
\leq \MV_{\ZZ^{n+1}}(\widetilde{\Delta},\ldots,\widetilde{\Delta})
=(n+1)!\int_{\Delta^{n}}\vartheta \dd \bfx \\
=(n+1)!\Big(\log(2)\vol(\Delta^{n})+\log
(\alpha)\int_{\Delta^{n}}\sum_{i=1}^{n}x_i \dd \bfx\Big)
=(n+1)\log (2) +\log (\alpha)\mbox{.}
\end{multline}

When $v$ is non-Archimedean, we have that $|\alpha|_{v}\le 1$ because
$\alpha$ is an integer. Hence $\vartheta_{i,v}\leq 0$, and so the
mixed integral of these concave functions is nonpositive.  Theorem~\ref{thm:4} together with \eqref{eq:22} and \eqref{eq:28} gives the
upper bound
\begin{equation*}
\h_{\ol{E}^{\mathrm{can}}}(p)\leq (n+1)\log (2)+ \log (\alpha ).
\end{equation*}

To conclude the example, we compute the bound given by
Corollary~\ref{cor:height2}. For $i=1,\dots, n$, we have
that $\ell(f_i)=\log(\alpha+1)$ and
$\MV_{\ZZ^n}(\Delta_0,\ldots,\Delta_{i-1},\Delta_{i+1},\ldots,\Delta_n)=1$. Hence,
this bound reduces to
\begin{equation*}
\h_{\ol{E}^{\mathrm{can}}}(p)\leq n\log(\alpha+1),
\end{equation*}
 concluding the study of this example. 
\end{example}

As an application of our results, we bound the
size of the coefficients of the $\bfu$-resultant of the direct image
under an equivariant map of the $0$-cycle defined by a family of
Laurent polynomials. As in the previous sections,
let $(\KK,\fM)$ be an adelic field satisfying the product
formula,  $\ol{\KK}$ an algebraic closure of $\KK$,  and $M\simeq \ZZ^{n}$ a lattice.

\begin{definition}
\label{def:13}
  Let $W\in Z_0(\PP^r_\KK)$ be a 0-cycle of a projective space over
  $\KK$ and $\bfu=(u_0,\ldots,u_r)$ a set of $r+1$ variables.  Write
  $W_{\ol{\KK}}=\sum_{\bfq}\mu_\bfq \, \bfq \in Z_0\big(\PP^r_{\ol{\KK}}\big)$ for
  the $0$-cycle obtained from $W$ by the base change
  $\KK\hookrightarrow \ol{\KK}$. 
The \emph{$\bfu$-resultant} (or \emph{Chow form}) of $W$ is defined as
\[
\Res(W)=\prod_\bfq (q_0u_0+\cdots+q_ru_r)^{\mu_\bfq}\in\KK(\bfu)^{\times},
\]
the product being over the points
$\bfq=(q_0:\cdots:q_r)\in\PP_{\KK}^r(\ol{\KK})$ in the support of
$W_{\ol{\KK}}$. It is well-defined up to a factor in~$\KK^{\times}$.
\end{definition}

The length of a Laurent polynomial (Definition \ref{def:l1norm}) is
invariant under adelic field extensions and multiplication by scalars. It is also submultiplicative,
in the sense that it satisfies the inequality 
\begin{equation*}
  \ell(fg) \le \ell(f)+\ell(g),
\end{equation*}
for $f,g\in \KK[M]$. The following result corresponds to Theorem
\ref{thm:2} in the introduction.

\begin{theorem}
\label{thm:3}
  Let $f_1,\ldots,f_n\in\KK[M]$, $\bfm_{0}\in M^{r+1}$ and
  $\bfalpha_{0}\in (\KK^{\times})^{r+1}$ with $r\ge 0$. Set
  $\Delta_{0}=\conv(m_{0,0},\dots, m_{0,r})\subset M_{\RR}$ and let
  $\varphi:\TT_{M}\rightarrow\PP^r_\KK$ be the monomial map
  associated to $\bfm_{0}$ and $\bfalpha_{0}$ as in~\eqref{eq:42}. For
  $i=1,\dots, n$, let $\Delta_{i}\subset M_{\RR}$ be the Newton
  polytope of $f_{i}$, and $\bfalpha_{i}$ the vector of nonzero
  coefficients of $f_{i}$.  Then
\[
\ell(\Res(\varphi_*Z(f_1,\ldots,f_n)))
\leq
\sum_{i=0}^n
\MV_M(\Delta_0,\ldots,\Delta_{i-1},\Delta_{i+1},\ldots,\Delta_n) \, \ell (\bfalpha_{i}).
\]
\end{theorem}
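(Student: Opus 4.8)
The plan is to compare $\ell(\Res(\varphi_*Z(f_1,\dots,f_n)))$ with the global height of $Z(f_1,\dots,f_n)$ with respect to the toric metrized divisor $\ol D_0=\ol D_{\bfm_0,\bfalpha_0}$ attached to $(\bfm_0,\bfalpha_0)$ as in \eqref{eq:46}, and then to apply Corollary~\ref{cor:height2}. First I would choose a projective toric variety $X$ over $\KK$ given by a regular complete fan compatible with $\Delta_0,\Delta_1,\dots,\Delta_n$, so that $\varphi$ extends to an equivariant morphism $X\to\PP^r_\KK$ and the toric metrized divisors $\ol D_i$ associated to $f_i$ (Definition~\ref{def:12}) are defined on $X$, with polytope $\Delta_i$, for $i=1,\dots,n$. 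By Proposition~\ref{prop:10}, $\ol D_0$ is semipositive, generated by small sections, and has associated polytope $\Delta_0$; it is moreover nef (Definition~\ref{def:9}), since $D_0$ is a nef Cartier divisor (its support function is the concave $\Psi_{\Delta_0}$) and, being generated by small sections, $\h_{\ol D_0}(p)\ge 0$ for every closed point $p$ of $X$.

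The first step is the inequality $\ell(\Res(\varphi_*Z(f_1,\dots,f_n)))\le\h_{\ol D_0}(Z(f_1,\dots,f_n))$. Pick a finite adelic field extension $(\FF,\fN)$ of $(\KK,\fM)$ over which $Z(f_1,\dots,f_n)$ splits completely, and write $Z(f_1,\dots,f_n)_\FF=\sum_{\bar p}\nu_{\bar p}\,\bar p$ with $\bar p\in\TT_M(\FF)$. Since pushforward of cycles commutes with base change, $(\varphi_*Z(f_1,\dots,f_n))_\FF=\sum_{\bar p}\nu_{\bar p}\,\varphi(\bar p)$; because the characters $\chi^m$ are nowhere vanishing on $\TT_M$ and the $\alpha_{0,j}$ are nonzero, every coordinate of $\varphi(\bar p)=(\alpha_{0,0}\chi^{m_{0,0}}(\bar p):\cdots:\alpha_{0,r}\chi^{m_{0,r}}(\bar p))$ is nonzero, and so by Definition~\ref{def:13}
\[
\Res(\varphi_*Z(f_1,\dots,f_n))=\prod_{\bar p}\Big(\sum_{j=0}^{r}\alpha_{0,j}\chi^{m_{0,j}}(\bar p)\,u_j\Big)^{\nu_{\bar p}}
\]
up to a factor in $\KK^\times$, which does not affect the length. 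By the submultiplicativity and the field-extension invariance of $\ell$ recalled before the statement, this gives $\ell(\Res(\varphi_*Z(f_1,\dots,f_n)))\le\sum_{\bar p}\nu_{\bar p}\,\ell\big(\sum_{j}\alpha_{0,j}\chi^{m_{0,j}}(\bar p)\,u_j\big)$. On the other hand, \eqref{eq:48} gives $-\log\|s_{D_0}(\bar p)\|_w=\ell_w\big((\alpha_{0,j}\chi^{m_{0,j}}(\bar p))_{0\le j\le r}\big)$ for every $w\in\fN$, and since $s_{D_0}$ is regular and non-vanishing on $\TT_M$, summing over $w$ with the weights $n_w$ yields $\h_{\ol D_0}(\bar p)=\ell\big(\sum_j\alpha_{0,j}\chi^{m_{0,j}}(\bar p)\,u_j\big)$. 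By the linearity of the global height and its invariance under adelic field extension (Proposition~\ref{prop:8}), $\sum_{\bar p}\nu_{\bar p}\,\h_{\ol D_0}(\bar p)=\h_{\ol D_0}(Z(f_1,\dots,f_n))$, which completes the step.

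For the second step I would apply Corollary~\ref{cor:height2} to the nef toric metrized divisor $\ol D_0$, obtaining
\[
\h_{\ol D_0}(Z(f_1,\dots,f_n))\le\Big(\sum_{v}n_v\max_{x\in\Delta_0}\vartheta_{0,v}(x)\Big)\MV_M(\Delta_1,\dots,\Delta_n)+\sum_{i=1}^{n}\ell(f_i)\,\MV_M(\Delta_0,\dots,\Delta_{i-1},\Delta_{i+1},\dots,\Delta_n),
\]
in which $\MV_M(\Delta_1,\dots,\Delta_n)$ is the $i=0$ term of the sum and $\ell(f_i)=\ell(\bfalpha_i)$ for $i\ge1$. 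Finally, since $\vartheta_{0,v}$ is the Legendre--Fenchel dual of $\psi_{\ol D_0,v}$ one has $\max_{x\in\Delta_0}\vartheta_{0,v}(x)=-\psi_{\ol D_0,v}(0)$, and \eqref{eq:50} gives $\psi_{\ol D_0,v}(0)=-\ell_v(\bfalpha_0)$ at every place; summing over $v$ with the weights $n_v$ shows $\sum_v n_v\max_{x\in\Delta_0}\vartheta_{0,v}(x)=\ell(\bfalpha_0)$. Combining the two steps yields the asserted bound.

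Beyond quoting Propositions~\ref{prop:10} and \ref{prop:8} and Corollary~\ref{cor:height2}, the entire content sits in the first step, and I do not expect a serious obstacle there: the only delicate point is bookkeeping --- commuting pushforward with base change, carrying the multiplicities $\nu_{\bar p}$, working over a finite extension trivializing the points, and matching $-\log\|s_{D_0}(\bar p)\|_w$ with $\ell_w((\alpha_{0,j}\chi^{m_{0,j}}(\bar p))_j)$ via \eqref{eq:48} --- combined with the submultiplicativity of $\ell$, which is precisely what turns the equality for a single point into the desired inequality for the whole cycle.
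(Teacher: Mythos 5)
Your proposal is correct and follows essentially the same route as the paper: rewrite the length of the $\bfu$-resultant via submultiplicativity and base-change invariance as a weighted sum of the lengths of the linear forms at the points of the $0$-cycle, identify each such length with $\h_{\ol D_{\bfm_0,\bfalpha_0}}(p)$ using \eqref{eq:48}, and then bound $\h_{\ol D_0}(Z(f_1,\ldots,f_n))$ by Corollary~\ref{cor:height2}, noting $\sum_v n_v\max\vartheta_{0,v}=\ell(\bfalpha_0)$. The paper works directly with the decomposition over $\ol\KK$ while you pass through a splitting finite extension, but these are interchangeable given Proposition~\ref{prop:8}.
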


\begin{proof}
Write $  Z(f_1,\ldots,f_n)_{\ol{\KK}}=\sum_{p} \mu_{p} \, p$, the sum being
over the points $p\in\TT_{M}(\ol{\KK})$. 
Since the length is  invariant 
under adelic field extensions and
submultiplicative, we deduce that
\begin{equation}
  \label{eq:47}
\ell(\Res(\varphi_*Z(f_1,\ldots,f_n)))
\le  \sum_{p} \mu_{p}\,  \ell \big(\alpha_{0,0}
\chi^{m_{0,0}}(p)\, u_0
+\cdots+ \alpha_{0,r} \chi^{m_{0,r}}(p)\, u_r\big).
\end{equation}

Let $X$ be a proper toric variety over $\KK$ defined by a fan that is
compatible with $\Delta_{i}$, $i=0,\dots, n$, and let $\ol{D}_{0}$ be
the toric metrized divisor on $X$ associated to $\bfm_{0}$ and
$\bfalpha_{0}$ as in \eqref{eq:46}. Given a point
$p\in\TT_{M}(\ol{\KK})$, we deduce from \eqref{eq:48} that
\begin{equation}
  \label{eq:52}
  \ell \big(\alpha_{0,0}
\chi^{m_{0,0}}(p)\, u_0
+\cdots+ \alpha_{0,r} \chi^{m_{0,r}}(p)\, u_r\big)=
\h_{\ol{D}_{0}}(p).
\end{equation}

By Proposition \ref{prop:10}, the toric metrized divisor is
semipositive and generated by small sections. In particular, it is nef.
Similarly as in \eqref{eq:54}, we also get from
Proposition~\ref{prop:10} that the $v$-adic roof functions of
$\ol{D}_{0}$ satisfy
$ \sum_{v\in\fM}n_v\max\vartheta_{0,v}=\ell(\bfalpha_{0})$.  Hence,
Corollary~\ref{cor:height2} implies that
\begin{equation}\label{eq:53}
\sum_{p}\mu_{p}\, \h_{\ol{D}_{0}}(p) \leq 
\sum_{i=0}^n\ell(\bfalpha_i)\MV(\Delta_0,\ldots,\Delta_{i-1},\Delta_{i+1},\ldots,\Delta_n).
\end{equation}
The statement follows then from \eqref{eq:47}, \eqref{eq:52} and
\eqref{eq:53}. 
\end{proof}


\begin{thebibliography}{BMPS16}

\bibitem[AW45]{ArtinWhaples:acfpf}
E.~Artin and G.~Whaples, \emph{Axiomatic characterization of fields by the
  product formula for valuations}, Bull. Amer. Math. Soc. \textbf{51} (1945),
  469--492.

\bibitem[Ber75]{Bernstein:nrsp}
D.~N. Bernstein, \emph{The number of roots of a system of equations},
  Funkcional. Anal. i Prilo\v zen. \textbf{9} (1975), 1--4, English
  translation: Functional Anal. Appl. {\textbf 9} (1975), 183--185.

\bibitem[BG06]{BG:hdg}
E.~Bombieri and W.~Gubler, \emph{Heights in {D}iophantine geometry}, New Math.
  Monogr., vol.~4, Cambridge Univ. Press, 2006.

\bibitem[BMPS16]{BMPS12}
J.~I. {Burgos Gil}, A.~{Moriwaki}, P.~{Philippon}, and M.~{Sombra},
  \emph{{Arithmetic positivity on toric varieties}}, J. Algebraic Geom.
  \textbf{25} (2016), 201--272.

\bibitem[Bou64]{Bou64}
N.~Bourbaki, \emph{\'{E}l\'ements de math\'ematique. {F}asc. {XXX}. {A}lg\`ebre
  commutative. {C}hapitre 5: {E}ntiers. {C}hapitre 6: {V}aluations}, Hermann,
  1964.

\bibitem[Bou70]{Bou70}
\bysame, \emph{\'{E}l\'ements de math\'ematique. {A}lg\`ebre. {C}hapitres 1 \`a
  3}, Hermann, 1970.

\bibitem[BPRS15]{BPRS15}
J.~I. {Burgos Gil}, P.~{Philippon}, J.~{Rivera-Letelier}, and M.~{Sombra},
  \emph{{The distribution of Galois orbits of points of small height in toric
  varieties}}, e-print arXiv:1509.01011v1, 2015.

\bibitem[BPS14]{BPS14}
J.~I. Burgos~Gil, P.~Philippon, and M.~Sombra, \emph{Arithmetic geometry of
  toric varieties. {M}etrics, measures and heights}, Ast\'erisque, vol. 360,
  Soc. Math. France, 2014.

\bibitem[BPS15]{BPS:smthf}
\bysame, \emph{Successive minima of toric
  height functions}, Ann. Inst. Fourier (Grenoble) \textbf{65} (2015),
  2145--2197.

\bibitem[Cas86]{Cassels:lf}
J.~W.~S. Cassels, \emph{Local fields}, London Math. Soc. Stud. Texts, vol.~3,
  Cambridge Univ. Press, 1986.

\bibitem[Cha06]{Chambert-Loir:meeb}
A.~Chambert-Loir, \emph{Mesures et \'equidistribution sur les espaces de
  {B}erkovich}, J. Reine Angew. Math. \textbf{595} (2006), 215--235.

\bibitem[Ful84]{Ful98}
W.~Fulton, \emph{Intersection theory}, Ergeb. Math. Grenzgeb. (3), vol.~2,
  Springer-Verlag, 1984.

\bibitem[Ful93]{Ful93}
\bysame, \emph{Introduction to toric varieties}, Ann. of Math. Stud., vol. 131,
  Princeton Univ. Press, 1993.

\bibitem[GKZ94]{GelfandKapranovZelevinsky:drmd}
I.~M. Gelfand, M.~M. Kapranov, and A.~V. Zelevinsky, \emph{Discriminants,
  resultants, and multidimensional determinants}, Math. Theory Appl.,
  Birkh\"auser, 1994.

\bibitem[Gub97]{Gubler:hsmf}
W.~Gubler, \emph{Heights of subvarieties over {$M$}-fields}, Arithmetic
  geometry ({C}ortona, 1994), Sympos. Math., XXXVII, Cambridge Univ. Press,
  1997, pp.~190--227.

\bibitem[HB84]{Heath-Brown:dasn}
D.~R. Heath-Brown, \emph{Diophantine approximation with square-free numbers},
  Math. Z. \textbf{187} (1984), 335--344.

\bibitem[Ku{\v{s}}76]{kushnirenko:pnnm}
A.~G. Ku{\v{s}}nirenko, \emph{Poly\`edres de {N}ewton et nombres de {M}ilnor},
  Invent. Math. \textbf{32} (1976), 1--31.

\bibitem[Lan02]{Lan02}
S.~Lang, \emph{Algebra. {R}evised third edition}, Graduate Texts in Math., vol.
  121, Springer-Verlag, 2002.

\bibitem[Laz04]{Lazarsfeld:posit_I}
R.~Lazarsfeld, \emph{Positivity in algebraic geometry. {I}. {C}lassical
  setting: line bundles and linear series}, Ergeb. Math. Grenzgeb. (3),
  vol.~48, Springer-Verlag, 2004.

\bibitem[Mai00]{Maillot:GAdvt}
V.~Maillot, \emph{G\'eom\'etrie d'{A}rakelov des vari\'et\'es toriques et
  fibr\'es en droites int\'egrables}, M\'em. Soc. Math. France, vol.~80, Soc.
  Math. France, 2000.

\bibitem[PS08]{PhilipponSombra:rBKe}
P.~Philippon and M.~Sombra, \emph{A refinement of the {B}ern\v stein-{K}u\v
  snirenko estimate}, Adv. Math. \textbf{218} (2008), 1370--1418.

\bibitem[Sch93]{Schneider:cbbmt}
R.~Schneider, \emph{Convex bodies: the {B}runn-{M}inkowski theory},
  Encyclopedia Math. Appl., vol.~44, Cambridge Univ. Press, 1993.

\bibitem[Som05]{Sombra:msvtp}
M.~Sombra, \emph{Minimums successifs des vari\'et\'es toriques projectives}, J.
  Reine Angew. Math. \textbf{586} (2005), 207--233.

\bibitem[Stu02]{Sturmfels:sspe}
B.~Sturmfels, \emph{Solving systems of polynomial equations}, CBMS Regional
  Conf. Ser. in Math., vol.~97, Amer. Math. Soc., 2002.

\end{thebibliography}

\providecommand{\bysame}{\leavevmode\hbox to3em{\hrulefill}\thinspace}
\providecommand{\MR}{\relax\ifhmode\unskip\space\fi MR }
\providecommand{\MRhref}[2]{%
  \href{http://www.ams.org/mathscinet-getitem?mr=#1}{#2}
}
\providecommand{\href}[2]{#2}

\end{document}